\newtheorem{theorem}{Theorem}[section]
\newtheorem{lemma}[theorem]{Lemma}
\newtheorem{prop}[theorem]{Proposition}
\newtheorem{cor}[theorem]{Corollary}
\theoremstyle{definition}
\newtheorem{definition}[theorem]{Definition}
\newtheorem{example}[theorem]{Example}
\newtheorem{remark}[theorem]{Remark}
\newtheorem*{list1}{Weierstrass Limits}
\newtheorem{assumption}[theorem]{Assumption}
\theoremstyle{remark}
\newcounter{step}
\DeclareRobustCommand*{\mfaktor}[3][]
{
	{ \mathpalette{\mfaktor@impl@}{{#1}{#2}{#3}} }
}
\newcommand*{\mfaktor@impl@}[2]{\mfaktor@impl#1#2}
\newcommand*{\mfaktor@impl}[4]{
	\settoheight{\faktor@zaehlerhoehe}{\ensuremath{#1#2{#3}}}%
	\settoheight{\faktor@nennerhoehe}{\ensuremath{#1#2{#4}}}%
	\raisebox{-0.5\faktor@zaehlerhoehe}{\ensuremath{#1#2{#3}}}%
	\mkern-4mu\diagdown\mkern-5mu%
	\raisebox{0.5\faktor@nennerhoehe}{\ensuremath{#1#2{#4}}}%
}
\newcommand{\G}{{\mathbb G}}
\newcommand{\LL}{{\mathbb L}}
\newcommand{\bP}{{\mathbb P}}
\newcommand{\bQ}{{\mathbb Q}}
\newcommand{\calK}{\mathcal K}
\newcommand{\calO}{\mathcal O}
\newcommand{\calM}{\mathcal M}
\newcommand{\calA}{\mathcal A}
\newcommand{\calB}{\mathcal B}
\newcommand{\calN}{\mathcal N}
\newcommand{\calW}{\mathcal W}
\newcommand{\calE}{\mathcal E}
\newcommand{\calC}{\mathcal C}
\newcommand{\calP}{\mathcal P}
\newcommand{\wi}{\mathrm{W}_{\mathrm{I}}}
\newcommand{\wii}{\mathrm{W}_{\mathrm{II}}}
\newcommand{\wiii}{\mathrm{W}_{\mathrm{III}}}
\newcommand{\SL}{\mathrm{SL}}
\newcommand{\PGL}{\mathrm{PGL}}
\newcommand{\Sym}{\mathrm{Sym}}
\newcommand{\Pic}{\mathrm{Pic}}
\newcommand{\mb}[1]{\mathbb{#1}}
\newcommand{\oMG}{\overline{\mathrm{W}}^G}
\newcommand{\MG}{{\mathrm{W}}^G}
\newcommand{\oMGslc}{\overline{\mathrm{W}}^G_{slc}}
\newcommand{\oMGL}{\overline{\mathrm{W}}^G_L}
\newcommand{\oMGslco}{\overline{\mathrm{W}}^G_{slc,o}}
\newcommand{\oMGLo}{\overline{\mathrm{W}}^G_{L,o}}
\newcommand{\sbb}{\overline{\mathrm{W}}^{*}}
\newcommand{\ka}{\overline{\calW}(\calA)}
\newcommand{\ska}{\overline{\calW}_\sigma(a)}
\newcommand{\ktwelve}{\overline{\calW}_\sigma({\frac{1}{12}+\epsilon})}
\newcommand{\ktwelvee}{\overline{\mathcal{W}}_\sigma({\frac{1}{12}-\epsilon})}
\newcommand{\ke}{\overline{\mathcal{K}}_{\epsilon}}
\newcommand{\me}{\overline{\mathrm{K}}_\epsilon}
\newcommand{\fe}{\overline{\mathcal{F}}_\epsilon}
\newcommand{\kepsilon}{\overline{\mathcal{W}}_\sigma(\epsilon)}
\newcommand{\kepsiloncoarse}{{\overline{\mathrm{W}}}_\sigma(\epsilon)}
\newcommand{\bru}{\overline{\mathscr{B}}}
\newcommand{\km}{\fe}
\newcommand{\oMGd}{\widetilde{\mathrm{W}}^G}
\renewcommand{\footnotesize}{\scriptsize}
\title{Compact Moduli of elliptic K3 surfaces}
\author{Kenneth Ascher \& Dori Bejleri}
\begin{document}
	
	\maketitle
	
	\begin{abstract}
		We construct various modular compactifications of the space of elliptic K3 surfaces using tools from the minimal model program, and explicitly describe the surfaces parametrized by their boundaries. The coarse spaces of our constructed compactifications admit morphisms to the Satake-Baily-Borel compactification and the GIT compactification of Miranda.
	\end{abstract}
	
	\section{Introduction}

	Ever since the compactification of the moduli space of smooth curves by Deligne-Mumford was accomplished, the search for analogous compactifications in higher dimensions became an actively studied problem in algebraic geometry. While moduli in higher dimensions is highly intricate, the pioneering work of Koll\'ar-Shepherd-Barron \cite{ksb} and Alexeev \cite{boundedness}, (see also \cite{hx1, hmx, kp, kolmodbook}, etc.) has established much of the underlying framework for modular compactifications in the (log) general type case via \emph{KSBA stable pairs}, where semi-log canonical singularities serve as the generalization of nodal curves (see the survey \cite{kollarmumford}).

	One of the most sought after compactifications is for the space of K3 surfaces. K3 surfaces do not immediately fit into the above framework as they are not of general type, but rather Calabi-Yau varieties. On the other hand, like for abelian varieties, since the space of (polarized) K3 surfaces is a locally symmetric variety, it has several natural compactifications, e.g. the Satake-Baily-Borel (SBB), toroidal, and semi-toric compactifications of Looijenga. Unlike the KSBA approach, these compactifications do not necessarily carry a universal family or modular meaning over the boundary. 
	
	As such, one of the central questions in moduli theory is to give the aforementioned naturally arising compactifications a stronger geometric meaning by connecting them with a KSBA compactification. With this in mind, the goal of this paper is to construct modular compactifications for elliptic K3 surfaces -- compactifications where the degenerate objects are K3 surfaces with controlled singularities -- and understand how they compare to the Satake-Baily-Borel compactification.

	By the Torelli theorem, the moduli space of polarized K3 surfaces is a 19 dimensional locally symmetric variety. Similarly, it is well known that the moduli space of elliptic K3 surfaces with a section, which we denote by $W$, is an 18 dimensional locally symmetric variety, corresponding to $U$-polarized K3 surfaces  (see \cite{dolgachev,nikulin}). Recall that a generic elliptic K3 surface $f : X \to \bP^1$ with section $S$ has 24 $\mathrm{I}_1$ singular fibers. Let $F_{\calA} = \sum a_i F_i$ denote the sum of these 24 fibers weighted by $a_i \in \bQ \cap [0,1]^{24}$.  We consider the closure of the locus of pairs $(f: X \to C, S + F_\calA)$ inside the KSBA moduli space. For the moment, we assume all $a_i = a$, so that we can quotient by $S_{24}$. Denote the closure of the resulting locus by $\ska$, and let $0 < \epsilon \ll 1$.
	
	\begin{theorem}(see Theorem \ref{thm:kepsilon}, Theorem \ref{thm:git2}, Theorem \ref{thm:appear}, and Figure \ref{eq:diagram})\label{thm:intro2} The proper Deligne-Mumford stacks $\ska$ for $a \in \bQ \cap [0,1]$ give modular compactifications of $W$. There is an explicit classification of the broken elliptic K3 surfaces parametrized by $\kepsilon$, and an explicit morphism from the coarse space $\kepsiloncoarse \to \sbb$ to the SBB compactification of $W$. Furthermore, the surfaces paramaterized by $\kepsilon$ satisfy $\mathrm{H}^1(X, \calO_X) = 0$ and $\omega_X \cong \calO_X$. 
	\end{theorem}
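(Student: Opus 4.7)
The plan is to handle the four assertions in order, drawing on the KSBA machinery, an explicit MMP analysis, and the Torelli theorem for elliptic K3 surfaces.

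First, to show that $\ska$ is a proper Deligne--Mumford stack providing a modular compactification of $W$, I would invoke the general theory of stable pairs for fibered surfaces as developed in Koll\'ar--Shepherd-Barron and its weighted extensions. The key inputs are: (i) boundedness of slc pairs $(f\colon X\to C, S+F_\calA)$ with fixed weights and fixed volume (here determined by the K3 invariants); (ii) local closedness and separatedness of the functor of stable pairs; and (iii) properness via the valuative criterion, which is the combination of semistable reduction for the base curve (via twisted stable maps \`a la Abramovich--Vistoli) together with running the relative MMP in the total space to produce the canonical slc limit. Once these are in hand, for $\calA=(a,\ldots,a)$ the symmetric group $S_{24}$ acts and the quotient stack is again a proper DM stack. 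The interior is identified with $W$ since over a generic elliptic K3 the 24 $\mathrm{I}_1$ fibers and section form an slc pair with positive log canonical, cf.\ Proposition \ref{prop:latticeksba}.

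For the classification of the broken elliptic K3 surfaces in $\kepsilon$, I would proceed by explicit birational surgery. Start with a one-parameter family degenerating a smooth elliptic K3 with its 24 fibers and section, and run the following two-stage procedure: first, stabilize the base curve $C$ using twisted stable maps applied to the $24$ marked points cut out by $F_\calA$, producing a nodal base whose components are $\bP^1$'s carrying a weighted pointed configuration; second, take any semistable model of the total space over the stabilized base and run the relative log MMP with boundary $S + F_\calA$ for $a=\epsilon$. Because $\epsilon$ is small, most singular fibers are contracted by this MMP and the remaining surface components are of a controlled shape (Weierstrass tails, elliptic ruled components, pseudoelliptic components obtained by contracting the section, etc.). I would enumerate the possible components via the classification of log canonical Weierstrass surfaces, then glue them along fibers over the nodes of the base. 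This is the step I expect to be the main obstacle, and occupies the bulk of the reference Theorem \ref{thm:kepsilon}; careful casework is needed to verify that in each limiting configuration the total pair is genuinely slc and not overlooked.

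For the morphism $\kepsiloncoarse\to\sbb$, the plan is to use the Torelli theorem and the extension properties of period maps. On the interior $W\subset\kepsilon$ the period map is an isomorphism onto its image in $\sbb$. Since $\kepsiloncoarse$ is a normal proper variety (it is the coarse space of a smooth DM stack after we verify it in the case marked in Theorem \ref{thm:mainintro}, and in general arises from a proper KSBA stack whose coarse space is seminormal), the Borel extension theorem applied to the period map off a codimension-two locus and the standard extension across codimension-one boundary strata give a morphism on all of $\kepsiloncoarse$. Explicitness then comes from matching boundary strata of $\kepsilon$ with cusps in $\sbb$ using the analysis of component (ii).

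Finally, for $\mathrm{H}^1(X,\calO_X)=0$ and $\omega_X\cong\calO_X$: on the interior both are standard for K3 surfaces. To propagate to the boundary, note that $\omega_X$ is trivial because for each slc limit in the MMP classification, the log canonical $K_X + S + F_\calA$ is numerically equivalent to $S + F_\calA$ times a factor depending on $\epsilon$, and explicit adjunction on each irreducible component produces a trivialization of $\omega_X$ that glues, reflecting the Calabi--Yau nature preserved by the crepant steps of the MMP. The vanishing $\mathrm{H}^1(X,\calO_X)=0$ then follows from Koll\'ar's results on cohomology and base change together with upper semicontinuity applied to the flat family $\ke\to\kepsiloncoarse$ emanating from the smooth locus, combined with checking connectedness of the dual complex of each boundary surface to rule out non-trivial $\mathrm{H}^1$ contributions from gluing.
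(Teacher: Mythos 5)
Your plan diverges from the paper in several places, and two of the divergences are genuine gaps rather than alternative routes. First, your argument for $\mathrm{H}^1(X,\calO_X)=0$ does not work as stated: upper semicontinuity of $h^1(\calO)$ in a flat family goes the wrong way (it only says $h^1$ can jump \emph{up} at the special fiber), so knowing the generic fiber is a K3 gives no bound on the boundary surface, and connectedness of the dual complex is not the relevant criterion either (the double curves have arithmetic genus one and some components, e.g. trivial $j$-invariant $\infty$ fibrations, have $h^1(\calO)\neq 0$, so a naive Mayer--Vietoris bookkeeping is delicate). The paper's Proposition \ref{obvs:dubois} instead uses that slc singularities are Du Bois and the Koll\'ar--Kov\'acs theorem that $h^i(\calO)$ is \emph{constant} in flat families with Du Bois fibers \cite{kk}; you need this strengthening, not semicontinuity. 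Second, for $\omega_X\cong\calO_X$ the phrase ``a trivialization of $\omega_X$ that glues'' assumes exactly the hard point. Component-wise triviality (via the canonical bundle formula) does not imply global triviality of $\omega_{X'}$ on the reducible model: one must show the model is Gorenstein and that $\Pic(X')\to\bigoplus_i\Pic(X_i)$ is injective, which Proposition \ref{prop:nomarkingcanonical} proves by an explicit cokernel computation using \cite{HP}; one must then also justify that the pseudoelliptic flips relating this model to the actual stable surface preserve the canonical (the MMP here is for $K+S+F_a$, not $K$, and the paper argues the flipped curves are $K$-trivial via the Cone Theorem). Note also that the surfaces in $\kepsilon$ have their sections contracted, so the divisor ``$S$'' you invoke in the numerical identity is not present on the stable pair.

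For the morphism $\kepsiloncoarse\to\sbb$, Borel extension does not apply directly: $\kepsiloncoarse$ is not a smooth compactification with snc boundary (smoothness in Theorem \ref{thm:mainintro} is for $\km$, not $\kepsilon$), so you would have to extend on a resolution and then descend, and descent requires showing the extended map is constant on exceptional fibers, i.e.\ that the SBB/GIT limit depends only on the stable-pair limit and not on the chosen one-parameter family. That independence is precisely what the paper checks, using the explicit boundary classification of Theorem \ref{thm:kepsilon}, before extending by normality via \cite[Theorem 7.3]{gg} and identifying the target with the GIT quotient $\oMG\cong\sbb$ of \cite{oo} (Theorem \ref{thm:git2}); your sentence about ``matching boundary strata'' gestures at this but does not supply the argument. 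Finally, the classification itself is the bulk of the theorem: the paper obtains it by first classifying at weight $\tfrac1{12}+\epsilon$ via twisted stable maps and a Weierstrass-model construction, then wall-crossing down to $\epsilon$ with explicit intersection-theoretic computations; your proposal defers this to unspecified ``careful casework,'' which is acceptable as a plan but leaves the main content unproved.
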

	
	Theorem \ref{thm:intro2} shows that the boundary of $\kepsilon$ parametrizes K3 surfaces with  slc singularities. Although $\kepsilon$ compactifies a moduli spaces of pairs, it gives a natural compactification of the space of elliptic K3s as the choice of fibers is intrinsic. Indeed, without a boundary divisor, the moduli space is a non-separated Artin stack. In Section \ref{sec:explicitboundary}, we present an alternative explicit description of the surfaces parametrized on the boundary more akin to Kulikov models. In particular, we show that we can decompose the boundary of $\ska$ into combinatorially described parameter spaces.

	As mentioned above, viewing the moduli space of elliptic K3 surfaces as a locally symmetric variety, one naturally obtains the SBB compactification $W^*$. While a priori the SBB compactification does not have a modular meaning, it turns out that in the case of elliptic K3 surfaces, this compactification can be identified with the GIT compactification of Weierstrass models of Miranda $\oMG$ (see Section \ref{sec:gitbackground} and \cite[Theorem 7.9]{oo}), which provides some geometric meaning. In particular, in the theorem above, as well as the remainder of this section, all of our spaces admit morphisms to $\oMG$.
	
	%So far we have introduced various candidates for a good compactification of elliptic K3 surfaces, and compared them to some existing ones. The various spaces can be summarized in the following diagram.

	One benefit of the SBB compactification is that all of the parametrized surfaces are irreducible. The next theorem discusses a modular compactification, coming from the KSBA approach, where the boundary parametrizes irreducible surfaces. 
	Indeed, consider pairs $(f: X \to \bP^1, S + \epsilon F)$ for $0 < \epsilon \ll 1$, i.e. only one singular fiber carries a non-zero weight, and this weight is infinitesimally small. We denote the closure of this locus by $\ke$.

	\begin{theorem}(see Theorem \ref{thm:boundary}, Theorem \ref{thm:git}, and Figure \ref{eq:diagram})\label{thm:intro3}
		The compact moduli space $\ke$ parametrizes irreducible semi-log canonical Weierstrass K3 surfaces satisfying $\mathrm{H}^1(X, \calO_X) = 0$ and $\omega_X \cong \calO_X$. Moreover, there is an explicit generically finite morphism from the coarse space $\me \to \sbb$.\end{theorem}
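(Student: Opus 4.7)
The plan is to obtain $\ke$ from $\ka$ via a wall-crossing procedure and to extract the stated properties from an explicit description of the resulting surfaces. Concretely, starting from $\ska$ with $a$ small, I would mark a single singular fiber with weight $\epsilon$ and run the relative log MMP on the universal family, contracting every component on which $K_X + S + \epsilon F$ becomes numerically trivial as the weights on the remaining $23$ singular fibers are reduced to zero. The resulting moduli space is $\ke$, and it receives a natural wall-crossing morphism from $\ka$.

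Identifying the surfaces parametrized by $\ke$ relies on the canonical bundle formula for elliptic fibrations. For an elliptic K3 $f \colon X \to \bP^1$ one has $\omega_X \cong f^*(\omega_{\bP^1} \otimes L)$ with $L$ the Hodge bundle of degree $2$, so the restriction of $K_X + S$ to any fiber component is trivial. Consequently, the unmarked fiber tails and any ruled components created as boundary strata of $\ka$ are log-trivial with respect to $K_X + S + \epsilon F$ and contract to points, leaving an irreducible Weierstrass model over $\bP^1$ with at worst cuspidal or nodal degenerate fibers where several $\mathrm{I}_1$ fibers have collided. The slc property then follows from the classification of Weierstrass singularities (canonical Du Val along the section, at worst cuspidal along fibers), and $\omega_X \cong \calO_X$ is immediate from $\deg L = 2 = -\deg \omega_{\bP^1}$. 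The vanishing $\mathrm{H}^1(X, \calO_X) = 0$ follows by flatness: $\chi(\calO)$ is constant in the family, $h^0(\calO_X) = h^2(\calO_X) = 1$ by Serre duality together with $\omega_X \cong \calO_X$, and upper semicontinuity in the flat family with smooth K3 generic fiber then forces $h^1 = 0$.

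For the morphism $\me \to \sbb$, the plan is to construct a period map on the open locus parametrizing smooth Weierstrass K3 surfaces with a marked fiber and then to extend it by Borel's extension theorem. On this open locus, forgetting the marked fiber gives a morphism to $W$, and since $\me$ is proper and normal while $\sbb$ is the Baily-Borel compactification of $W$, the period map extends uniquely to a morphism $\me \to \sbb$. Generic finiteness is computed on the interior: the forgetful map has degree $24$, equal to the generic number of $\mathrm{I}_1$ fibers of an elliptic K3. The main obstacle will be verifying that the period map genuinely extends across the boundary divisors of $\me$, which reduces via the boundary classification of Theorem \ref{thm:boundary} to checking that each slc degeneration type has unipotent monodromy lying in the standard parabolic subgroups determining the cusps of $\sbb$; this is a case-by-case computation controlled by the explicit description of the degenerate fibers produced by the wall-crossing.
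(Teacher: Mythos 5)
Your proposal has a genuine gap at its crucial step, the morphism $\me \to \sbb$, and the first half is also looser than it needs to be. For the boundary classification: since the section of an elliptic K3 is a $(-2)$-curve, $(K_X + S + \epsilon F)\cdot S = -2 + \epsilon < 0$, so the section is necessarily contracted and the stable surfaces are \emph{pseudoelliptic} (equivalently, the base curve with one $\epsilon$-marked point is unstable in the Hassett space); they are not Weierstrass fibrations over $\bP^1$ as you assert, though they are obtained from slc Weierstrass K3s by contracting the section, which is how the paper phrases Theorem \ref{thm:boundary}. Moreover, the components not containing the marked fiber are not disposed of by a ``log-trivial contraction'': in the one-parameter family they first undergo Type $\wii$ log flips (becoming Type I pseudoelliptics) and only then Type $\wiii$ contractions, and the marked fiber crosses $\wi$ walls to reach Weierstrass form; the paper runs this via the twisted stable maps limit plus auxiliary markings $G$ whose weights are reduced, using the Hassett space to force an irreducible base. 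Also note that slc is built into the KSBA functor, so it is not something to deduce from ``classification of Weierstrass singularities'' (which would in any case be false as stated, since Weierstrass models can have type L cusps or worse); the content is the irreducibility and Weierstrass structure. Your argument for $\mathrm{H}^1(X,\calO_X)=0$ via constancy of $\chi(\calO_X)$, $\omega_X \cong \calO_X$, and Serre duality on the (Cohen--Macaulay) slc surface is a legitimate alternative to the paper's Du Bois argument (Proposition \ref{obvs:dubois}), granted the triviality of $\omega_X$, which survives the section contraction since one contracts a $(-2)$-curve.

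The second half is where the proposal does not go through as written. You invoke Borel/SBB extension from the open locus, but the minimality property of $\sbb$ requires a \emph{smooth} source with simple normal crossings boundary and a locally liftable map; $\me$ is only known to be normal (smoothness is proved for $\km$, not $\ke$), so you would have to pass to a resolution, extend there, and then descend, and descent requires showing the extension is constant on the fibers of the resolution over the boundary -- none of which is addressed. You yourself flag the extension across the boundary as the ``main obstacle'' and defer it to unperformed case-by-case monodromy computations, so the key step is missing. The paper takes an entirely different and more economical route (Theorem \ref{thm:git}): blow up the point where the section was contracted to recover an slc Weierstrass elliptic K3, observe that slc Weierstrass fibrations are GIT semistable by Miranda, build a $\PGL_2$-equivariant classifying map from a $\PGL_2$-torsor over $\ke$ to the space of Weierstrass data, and thereby obtain a morphism $\me \to \oMG$; the identification with $\sbb$ is then Odaka--Oshima's theorem $\oMG \cong \sbb$, with no period-map extension or monodromy analysis needed. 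Generic finiteness of degree $24$ (the choice of marked $\mathrm{I}_1$ fiber) then agrees with your count, but note the paper's morphism is generically finite and \emph{not} finite, since all surfaces with an L type cusp of fixed $j$-invariant map to one polystable point.
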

	
	In light of the above theorem, it is natural to ask how the compactifications $\kepsilon$ and $\ke$ are related. In previous work (see \cite{master}) we showed the existence of wall-crossing morphisms on moduli spaces of elliptic surfaces. In particular, our previous work implies that (up to a $24$-to-$1$ base change corresponding to choosing a singular fiber) the universal families of $\kepsilon$ and $\ke$ are related by an explicit series of flips and divisorial contractions as the weights of 23 of the marked fibers are reduced from $\epsilon$ to 0. This aspect is crucial to our work (see e.g. Section \ref{sec:onemarked}) -- these explicit morphisms allow us to understand how our compactifications are related to each other, and how they compare to others lacking a modular meaning.

	Finally, we introduce one more KSBA compactification. While in $\ke$ we mark one singular fiber with weight $\epsilon$, it is natural to ask what happens if we mark \emph{any} fiber, not necessarily singular, with weight $\epsilon$. We denote this compactification by $\km$. Before stating the final theorem of the introduction, we point the reader to Figure \ref{eq:diagram} for an overview of the spaces introduced in this paper. 
	
	\medskip

	\begin{minipage}{0.45\textwidth}
		
		\begin{equation}\label{eq:diagram}
			\begin{tikzcd}
				& \ka \ar[d] \ar[ld]  \\
				\ska \ar[rd] \ar[d,"\cong"] & \ke \ar[d] \ar[hookrightarrow]{r}& \km \ar[d] \\
				\bru^\nu  & \sbb \cong \oMG & \oMGd \ar[l] \end{tikzcd} \end{equation}\captionof{figure}{This diagram shows the various compactifications we introduce in this paper as well as how they are related (see also Remark \ref{rem:variousmoduli}).}
	\end{minipage}
	\begin{minipage}{0.45\textwidth}
		\footnotesize
		\begin{itemize}
			\item[$\bru^\nu$:] The normalization of  Brunyate's compactification with small weights on both section and singular fibers (see Section \ref{sec:brunyate}).
			\item[$\ka$:] KSBA compactification with $\calA$-weighted singular fibers.
			\item[$\ska$:] When $\calA = (a, \dots, a)$, we quotient by $\mathrm{S}_{24}$. 
			\item[$\ke$:] KSBA compactification with a single $\epsilon$-marked singular fiber (where $\epsilon \ll 1$).
			\item[$\km$:] KSBA compactification with \emph{any fiber} marked by $\epsilon$ (where $\epsilon \ll 1$).
			\item[$\sbb$:] SBB compactification of the period domain moduli space $W$.
			\item[$\oMG$:] Miranda's GIT compactification of Weierstrass models (see Section \ref{sec:gitbackground}).
			\item[$\oMGd$:] GIT compactification of Weierstrass models with a chosen fiber (see discussion after Theorem \ref{thm:mainintro}).
		\end{itemize}
	\end{minipage}
	
	\medskip
	\medskip

	\begin{theorem}(see Theorem \ref{thm:mainresult} and Figure \ref{eq:diagram})\label{thm:mainintro} There exists a smooth proper Deligne-Mumford stack $\km$ parametrizing semi-log canonical elliptic K3 surfaces with a single marked fiber. Its coarse space is isomorphic to an explicit GIT quotient $\oMGd$ of Weierstrass K3 surfaces and a chosen fiber. Furthermore, the surfaces parametrized by $\km$ satisfy $\mathrm{H}^1(X, \calO_X) = 0$ and $\omega_X \cong \calO_X$.
	\end{theorem}
	
	On the interior, $\km$ is a $\bP^1$ bundle over $W$. In this sense, $\km$ is similar in spirit to the KSBA compactification of Laza of degree two K3 surfaces \cite{laza}. The GIT problem of Miranda can be modified  to parametrize Weierstrass fibrations with a chosen fiber (see Section \ref{sec:git2}), denoted above by $\oMGd$. It turns out that $\oMGd$ is precisely the coarse moduli space of $\km$ -- in particular, the morphism $\km \to \oMGd$ realizes $\km$ as a smooth Deligne-Mumford stack.

	Our approach in this paper combines explicit use of the theory of twisted stable maps (see e.g. \cite{tsm}) with the minimal model program. The various compactifications are then related by an explicit series of wall-crossing morphisms.  In particular, we wish to emphasize that the power of our approach lies in understanding the compactifications for various coefficients and how they are related via wall crossing morphisms.  Often the spaces with very small coefficients are the smallest compactifications which are still modular, but having access to the spaces for all coefficients is fruitful in understanding the geometry of compactifications obtained via different methods.

	%\subsection{Connections with Geometric Invariant Theory (GIT)}

	%\begin{theorem}\cite[Theorem 7.9]{oo} The period map extends to give an isomorphism $\oMG \cong \sbb$. \end{theorem}
	
	%\begin{remark} In Theorems \ref{thm:intro2},  \ref{thm:intro3}, and \ref{thm:mainintro}, we also obtain morphisms to the GIT quotient $\oMG$. \end{remark}

	%\subsection{The context and construction of our moduli space}

	\subsection{Previous results}\label{sec:brunyate}
	Using  Kulikov models, Brunyate's thesis \cite{brunyate} constructs a stable pairs compactification of the space of elliptic K3 surfaces $\bru$ which parametrizes pairs $(X, \epsilon S + \delta F)$, where $\epsilon$ and $\delta$ are both small. In particular, Brunyate gives a classification of the surfaces appearing on the boundary, and conjectures that the normalization of $\bru$ is a toroidal compactification.  Recently, Alexeev-Brunyate-Engel \cite{abe} confirmed Brunyate's conjecture, and showed that this space is isomorphic to a particular toroidal compactification using the theory of integral affine geometry and continuing the program started in \cite{aet}. 
	
	One difference between our approach and the work of Brunyate, is in our descriptions of the compactifications at various weights and choice of markings. Instead of using Kulikov models, we describe the steps of MMP and the induced wall-crossing morphisms that relate the stable limits of elliptic K3 surfaces for different weights to highlight the underlying geometry of the various compactifications. Brunyate's space $\bru$ admits a morphism $\overline{\mathcal{W}}_\sigma(\epsilon) \to \bru$ which identifies $\overline{\mathcal{W}}_\sigma(\epsilon)$ with the normalization of $\bru$ (see Proposition \ref{prop:brunyate} and Remark \ref{rem:brunyate2}). In particular, the boundary components of $\bru$ and $\kepsilon$ are in bijection (see Remark \ref{rmk:brunyate}) and the moduli spaces parametrize essentially the same surfaces. Indeed there is a sequence of flips relating the universal family of $\bru$ and the universal family over $\overline{\mathcal{W}}_\sigma(\epsilon)$ which induces this morphism. 
	
	Finally, we note that in a slightly different direction, Inchiostro constructs a KSBA compactification of the space of Weierstrass fibrations (of not necessarily K3 surfaces) with both section and fibers marked by $0 < \epsilon, \delta \ll 1$ \cite{giovanni},

	\begin{comment}\subsection{Pseudoelliptic surfaces and Kulikov degenerations}\footnote{i moved most of the stuff to the above section, and am inclined to remove the following two paragraphs. until this point we have not mentioned pseudoelliptic surfaces, so i'm not really sure how much sense this section will make to a reader.}
	
	One feature of the universal family for our compactification that might be surprising at first glance is the appearance of \emph{pseudoelliptic surfaces}-- surface components obtained by contracting the section of an elliptic surface. These were first observed by La Nave \cite{ln}, and in fact even the generic member of the universal family of $\overline{\mathcal{W}}_\sigma(\epsilon)$ is pseudoelliptic. While this might initially seem like a bug, the pseudoelliptic surfaces that appear still satisfy the K3 conditions, $H^1(X, \calO_X) = 0$ and $\omega_X \cong \calO_X$, so they can be thought of as slc versions of Kulikov degenerations (see Propositions \ref{obvs:dubois} and \ref{prop:nomarkingcanonical}).  
	
	These pseudoelliptic flips which make the pseudoelliptic components appear may be thought of as putting the surface into \emph{minus-one form} (see \cite{MM}). The combinatorics of Kulikov models in minus-one form plays a role in understanding the geometry of the toroidal compactification of the moduli space (see for example \cite{FS}) so the appearance of pseudoelliptic surfaces seems natural from this perspective.  
	\end{comment}

	\subsection{Other lattice polarizations}
	It is natural to consider fibrations with specified singular fibers. In this case, one obtains a moduli space which is a locally symmetric variety, corresponding to a $M$-lattice polarization, encoding the singular fiber type. Our methods work in that case as well. Here we quickly discuss an example of this point of view.

	%In Section \ref{sec:latticepolarized} we will see that the structure of an elliptic fibration with section on a K3 surface is equivalent to a lattice polarization by the hyperbolic plane $U$ (Definition \ref{def:latticepol}). We may consider more generally the moduli space of $M$-polarized K3 surfaces for $M$ a lattice that contains $U$ as a primitive sub-lattice. In this case, $U$ determines an elliptic fibration and $U^\perp$, the orthogonal complement of $U$ in $M$, determines the configuration of singular fibers on a generic such polarized $K3$. We do note, however, that there is an ambiguity here between $\mathrm{I}_1/\mathrm{II}$, $\mathrm{I}_2/\mathrm{III}$, and $\mathrm{I}_3/\mathrm{IV}$ fibers which both contribute a trivial, $A_1$, and $A_2$ sublattice of $U^\perp$ respectively. We can apply the methods of this paper to this more general situation. 
	
	%More specifically, we can consider the Weierstrass model of the generic member of the family of $M$-polarized K3 surfaces and mark the singular fibers as well as the section. This furnishes a stable pairs compactification of the space of $M$-polarized K3 surfaces whose boundary can be described using the same wall-crossing calculations performed in this paper. For example, we can consider the following situation, which is in some sense completely opposite in complexity to the generic elliptic K3 surface, which satisfies $U^\perp = 0$. 
	
	\begin{example} Consider the lattice $M = U \oplus D_4^{\oplus 4}$. Then $M$-polarized K3 surfaces correspond to $4\mathrm{I}_0^*$ isotrivial elliptic K3 surfaces. Equivalently, these are Kummer K3 surfaces obtained from abelian surfaces of the form $E \times E'$ with the elliptic fibration induced by the projection $E \times E' \to E$. Marking the $4$ minimal Weierstrass cusps by a single weight $a$ gives us a moduli space whose coarse space is two copies of the $j$-line, one parametrizing the $j$-invariant of the fibration, and the other the $j$-invariant of the configuration of singular fibers. The stable pairs compactification has coarse space given by $\mathbb{P}^1 \times \mathbb{P}^1 = \overline{M}_{0,4} \times \overline{M}_{0,4}$. The universal family consists of $4\mathrm{N}_1$ isotrivial $j$-invariant $\infty$ fibrations over the locus $\{\infty\} \times \mathbb{P}^1$, a union $X \cup_{\mathrm{I_0}} X$ of two copies of the $2\mathrm{I}_0^*$ rational elliptic surface glued along a smooth fiber over the locus $\mb{P}^1 \times \{\infty\}$, and a union $X \cup_{\mathrm{N}_0} X$ of two copies of the $2\mathrm{N}_1$ isotrivial $j$-invariant $\infty$ fibration glued along an $\mathrm{N}_0$ fiber over the point $(\infty, \infty)$. 
	\end{example} 
	
	\subsection{Structure of the paper}
	In \textbf{Section} \ref{sec:intromoduliK3} we discuss the background on elliptic K3 surfaces and their moduli (as a period domain, the Satake-Baily-Borel compactification, and a Geometric Invariant Theory compactification). In \textbf{Section} \ref{sec:broken} we review the results from our previous works (\cite{calculations, tsm, master, tsm2}) on KSBA compactifications of moduli spaces of elliptic fibrations and the connection with twisted stable maps. In \textbf{Section} \ref{sec:weightedstable} we restrict to the case of elliptic K3 surfaces and collect the definitions of and preliminary observations on the compactifications we consider in this paper, including a discussion on isotrivial $j$-invariant $\infty$ fibrations of K3 type. 
	
	The main body of the paper begins with \textbf{Section} \ref{sec:firstwall} where we discuss the wall-crossings that occur for the compactification $\ska$ as the coefficient $a$ is lowered from $1$ down to $1/12 + \epsilon$ for $0 < \epsilon \ll 1$.  In \textbf{Section} \ref{sec:epsilon} we continue the wall-crossing analysis as $a$ is decreased down to $0 < \epsilon \ll 1$, and we prove \textbf{Theorem} \ref{thm:intro2}, which describes the surfaces appearing on the boundary of the moduli space $\kepsilon$. In \textbf{Section} \ref{sec:explicitboundary} we use Theorem \ref{thm:intro2} and twisted stable maps (Section \ref{sec:tsm}) to explicitly describe the boundary components of $\kepsilon$. Finally, in \textbf{Section} \ref{sec:onefiber} we describe the moduli spaces with one marked fiber $(\ke$ and $\km$) and prove \textbf{Theorem} \ref{thm:intro3} and \textbf{Theorem} \ref{thm:mainintro}; the latter theorem is proven by introducing a modified version of Miranda's GIT compactification (see Section \ref{sec:git2}).

	\subsection*{Acknowledgments} We thank Valery Alexeev, Izzet Coskun, Kristin DeVleming, Giovanni Inchiostro,  J\'anos Koll\'ar, Radu Laza, Yuchen Liu,  Siddharth Mathur, Yuji Odaka, and David Yang. We thank Adrian Brunyate for pointing out a gap in a previous version. We thank the referees for their very helpful comments and suggestions. Both authors supported by NSF Postdoctoral Fellowships.  Part of this paper was written while K.A. was in residence at Mathematical Sciences Research Institute in Berkeley, CA, during the Spring 2019, supported by the National Science Foundation under Grant No. 1440140. K.A. partially supported by NSF grant DMS-2140781 (formerly DMS-2001408). 
	
	\section{Elliptic K3 surfaces and their moduli} \label{sec:intromoduliK3}
	
	%%%%%%%%%%%%%%%%%%%%%%%%%%%%%%%%%%%%%%%%%%%%%%%%%
	
	\subsection{Elliptic surfaces} We begin with the basic definitions surrounding elliptic surfaces following \cite{master} (see also \cite{mir3}).

	\begin{definition}
		An irreducible \textbf{elliptic surface with section} ($f: X \to C, S)$ is an irreducible surface $X$ together with a surjective proper flat morphism $f: X \to C$ to a smooth curve $C$ and a section $S$ such that:
		\begin{enumerate} 
			\item the generic fiber of $f$ is a stable elliptic curve, and
			\item the generic point of the section is contained in the smooth locus of $f$. 
		\end{enumerate}
		We call the pair $(f: X \to C, S)$ \textbf{standard} if all of $S$ is contained in the smooth locus of $f$.\end{definition}
	
	\begin{definition}\label{def:weierstrassmodel} A \textbf{Weierstrass fibration} is an elliptic surface obtained from a standard elliptic surface by contracting all fiber components not meeting the section. We call the output of this process a \textbf{Weierstrass model}. If starting with a smooth relatively minimal elliptic surface, we call the result a \textbf{minimal Weierstrass model}.  \end{definition}
	
	The geometry of an elliptic surface is largely influenced by the \emph{fundamental line bundle} $\mathscr L$.
	
	\begin{definition} The \textbf{fundamental line bundle} of a standard elliptic surface is $\mathscr L := (f_* \mathcal{N}_{S/X})^{-1}$, where $\mathcal{N}_{S/X}$ denotes the normal bundle of $S$ in $X$. For an arbitrary elliptic surface we define $\mathscr{L}$ as the line bundle associated to its minimal semi-resolution\footnote{the semi-normal version of resolution of singularities, see e.g. \cite[Section 1.13]{singmmp}}. \end{definition}
	
	For $X$ a standard elliptic surface, the line bundle $\mathscr{L}$ is invariant under taking a semi-resolution or Weierstrass model, is independent of choice of section $S$, has non-negative degree, and determines the canonical bundle of $X$ if $X$ is either relatively minimal or Weierstrass (see \cite[III.1.1]{mir3}). 
	
	\subsection{Singular fibers} If $(f: X \to C, S)$ is a smooth relatively minimal elliptic surface, then $f$ has finitely many singular fibers which are each unions of rational curves with possibly non-reduced components whose dual graphs are ADE Dynkin diagrams. The singular fibers were classified by Kodaira-Ner\'on (see \cite[Section V.7]{complexsurfaces}). 
	
	An elliptic surface in Weierstrass form can be described locally by an equation of the form $y^2 = x^3 + Ax + B$ where $A$ and $B$ are functions of the base curve. Furthermore, the possible singular fiber types  can be characterized in terms of vanishing orders of $A$ and $B$ by Tate's algorithm (see \cite[Table 1]{sselliptic}). Moreover, if the smooth relatively minimal model $(f : X \to C, S)$ has a singular fiber with a given Dynkin diagram, the minimal Weierstrass  model will have an ADE singularity of the same type.

	\subsection{Elliptic K3 surfaces}
	
	By the canonical bundle formula and the observation that $\deg \mathscr{L} = 0$ if and only if the surface is a product, a smooth elliptic surface with section $(f : X \to C, S)$ is a K3 surface if and only if $C \cong \mb{P}^1$ and $\deg(\mathscr{L}) = 2$ (see \cite[III.4.6]{mir3}). 
	
	\begin{definition} A standard (possibly singular) elliptic surface is of \textbf{K3 type} if $C \cong \bP^1$ and $\deg(\mathscr{L}) = 2$. \end{definition}
	
	For an elliptic surface of K3 type, the Weierstrass model is given by $y^2 = x^3 + Ax + B$, where $A$ and $B$ are sections of $\calO(8)$ and $\calO(12)$ respectively, and the \emph{discriminant} $\mathscr{D} = 4A^3 + 27B^2$ is a section of $\mathscr{L}^{\otimes 12} \cong \calO(24)$.

	\begin{remark}\label{rmk:singfibers} The number of singular fibers of a Weierstrass elliptic K3 counted with multiplicity is 24, and a generic elliptic K3 has exactly 24 nodal $(\mathrm{I}_1)$ singular fibers. \end{remark}

	We now discuss lattice polarized K3 surfaces and their moduli (see \cite{HT, friedman, friedmanbook}).

	\subsection{Moduli of lattice polarized K3 surfaces}
	An elliptic K3 with section $(f: X \to \bP^1, S)$ are characterized by the fact that $\rm{NS}(X)$ contains a lattice $U$ which is spanned by the classes of the fiber $f$ and section $S$. The moduli of K3 surfaces with specified $\rm{NS}(X)$ were studied by Dolgachev \cite{dolgachev} (see also \cite{nikulin}). By the Torelli theorem for polarized K3 surfaces, the moduli space of minimal Weierstrass elliptic K3 surfaces with at worst ADE singularities is an 18-dimensional locally symmetric variety $W = \Gamma\backslash D$ associated to the lattice $U^\perp_{\textrm{K}3} \cong U^2 \oplus E^2_8$.

	\subsection{The Satake-Baily-Borel compactification}\label{sec:bb} One can use the techniques of Baily-Borel \cite{bb} to obtain a compactification $\sbb$ by adding some curves and points. We briefly review this compactification following  \cite[Section 3.1]{lazacpd}. The boundary components of $\sbb$ are determined
	by rational maximal parabolic subgroups of the identity component of the orthogonal group $O(2,18)$ of the lattice $U_{\textrm{K}3}^\perp$. Every boundary component of $\sbb$ has the structure of a locally symmetric variety of lower dimension. Furthermore, we recall the following properties:
	\begin{enumerate}
		\item The compactification is canonical.
		\item The boundary components have high codimension (as they are points and curves).
		\item It is \emph{minimal}: if $S$ is a smooth variety with $\overline{S}$ a smooth simple normal crossing compactification, then any locally liftable map $S \to W$ extends to a regular map $\overline{S} \to \sbb$.
	\end{enumerate}

	\begin{theorem}(see \cite[Section 2.3]{HT} and \cite{scattone}) The boundary of $\sbb$ is a union of zero and one dimensional strata. The 0-dim strata correspond to K3s of Type III, and the 1-dim strata to degenerate K3’s of Type II. Moreover, the 1-dim strata are all rational curves, each parametrizing the $j$-invariant of the elliptic double curves appearing in the corresponding Type II degenerate K3.  \end{theorem}

	\subsection{Geometric invariant theory}\label{sec:gitbackground}
	Miranda \cite{mir} used geometric invariant theory (GIT) to construct a compactification of the moduli space of Weierstrass fibrations, and completed an explicit classification in the case of rational elliptic surfaces. More recently, Odaka-Oshima \cite{oo} explicitly calculated Miranda's compactification for the case of elliptic K3 surfaces. Moreover, they showed that the GIT compactification of Miranda $\oMG$ is isomorphic to $W^*$, the SBB compactification. In particular, using this identification, one is able to give a geometric meaning to $W^*$ by relating the boundary of $W^*$ with the GIT polystable orbits in $\oMG$. We review these results now.

	Let $\Gamma_n = \Gamma(\bP^1, \calO_{\bP^1}(n))$. The surface $X$ has a \emph{Weierstrass equation}, and as such $X$ can be realized as a divisor in a $\bP^2$-bundle over the base curve. For the Weierstrass model of an elliptic K3 surface, we think of $X$ as being the closed subscheme of $\bP(\calO_{\bP^1}(4) \oplus \calO_{\bP^1}(6) \oplus \calO_{\bP^1})$ defined by the equation $y^2z = x^3 + Axz^2 + Bz^3,$ where $A \in \Gamma_8$, $B \in \Gamma_{12}$,  and 
	\begin{enumerate}
		\item $4A(q)^3 + 27B(q)^2 = 0$ precisely at the (finitely many) singular fibers $X_q$, 
		\item and for each $q \in \bP^1$ we have $v_q(A) \leq 3$ or $v_q(B) \leq 5$. 
	\end{enumerate} 
	
	We note that any Weierstrass elliptic K3 surface (with section) and ADE singularities satisfies the above conditions, and conversely, the surface defined as above is a Weierstrass elliptic K3 surface with section and ADE singularities (see \cite[Theorem 7.1]{oo}). 
	
	We denote by $V_{24} = \Gamma_8 \oplus \Gamma_{12}$ and the GIT moduli space for Weierstrass elliptic K3 surfaces by $\oMG = V_{24}^{ss} \sslash SL_2$. By the above discussion the open locus $\MG \subset \oMG$ parametrizes the ADE Weierstrass elliptic K3 surfaces. The following theorem describes the boundary $\oMG \setminus \MG$.

	\begin{theorem}\cite[Proposition 7.4]{oo}\label{thm:gitsss} The boundary $\oMG \setminus \MG$ is as follows, namely there is a
		\begin{enumerate}
			\item 1-dimensional component $\oMGslc$ parametrizing isotrivial $j$-invariant $\infty$ slc surfaces.
			\item  1-dimensional component $\oMGL$ whose open locus $\oMGLo$ parametrizes normal surfaces with 2 type L type cusps. 
		\end{enumerate}
		Furthermore, the intersection of the two components is the infinity point of both $\bP^1$s parametrizing the unique $j$-invariant $\infty$ slc surface with two L type cusps. This point is polystable, and the strictly semistable locus is $\oMGL$, i.e $\oMGslc$ is part of the GIT-stable locus of $\oMG$.
	\end{theorem}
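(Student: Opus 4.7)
The approach is the Hilbert--Mumford numerical criterion applied to the natural $\mathrm{SL}_2 \times \G_m$-action on $V_{24} = \Gamma_8 \oplus \Gamma_{12}$, where $\mathrm{SL}_2$ reparametrizes $\bP^1$ and the $\G_m$ absorbs the Weierstrass gauge $(A,B) \mapsto (\lambda^4 A, \lambda^6 B)$. Up to conjugation any destabilizing one-parameter subgroup of $\mathrm{SL}_2$ fixes a pair of opposite points $p,q \in \bP^1$, and the induced weights on the monomial bases of $\Gamma_8$ and $\Gamma_{12}$ form arithmetic progressions determined by the vanishing orders $v_p(A), v_p(B)$. Combining these weights with the $\G_m$-weights $(4,6)$ and optimizing yields the numerical criterion: $(A,B)$ is GIT-stable (respectively semistable) if and only if $\min(3v_p(A), 2v_p(B)) < 12$ (respectively $\leq 12$) for every $p \in \bP^1$. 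The stable locus thus coincides with the ADE locus $\MG$.

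The strictly semistable locus consists of those $(A,B)$ with $\min(3v_p(A), 2v_p(B)) = 12$ at some $p$. I would split the analysis according to whether the discriminant $\mathscr{D} = 4A^3 + 27B^2$ vanishes identically. If $\mathscr{D} \not\equiv 0$, the point $p$ is a type L cusp; taking the polystable 1-PS limit centered at $p$ produces a second L cusp at the opposite point $q$, and the degree bounds $\deg A = 8$, $\deg B = 12$ force both cusps to be minimal, $v_{p_i}(A) = 4$, $v_{p_i}(B) = 6$. Hence $A = c_A \ell_1^4 \ell_2^4$ and $B = c_B \ell_1^6 \ell_2^6$ for distinct linear forms $\ell_i$, and after normalizing $\{p_1,p_2\} = \{0,\infty\}$ by $\mathrm{SL}_2$ the residual moduli is $(c_A, c_B) \in \A^2 \setminus 0$ modulo $(c_A, c_B) \mapsto (\lambda^4 c_A, \lambda^6 c_B)$, giving $\oMGL \cong \bP(4,6) \cong \bP^1$. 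If instead $\mathscr{D} \equiv 0$ one writes $A = \alpha u^2$, $B = \beta u^3$ with $u \in \Gamma_4$ and $4\alpha^3 + 27\beta^2 = 0$, reducing the GIT problem to the classical binary quartic quotient $\Gamma_4 /\!/ \mathrm{SL}_2 \cong \bP^1$. Under this reduction, semistable $u$ has root multiplicities at most $2$, stable $u$ has four distinct roots (the $4\mathrm{N}_1$ configuration parametrized by $\oMGslco$), and the unique strictly semistable polystable orbit is represented by $u = x_0^2 x_1^2$ (the $2\mathrm{N}_2$ surface), giving $\oMGslc \cong \bP^1$ with a single non-stable boundary point.

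Finally I would identify the intersection. The orbit $u = x_0^2 x_1^2$ yields Weierstrass data $A = \alpha(x_0 x_1)^4$, $B = \beta(x_0 x_1)^6$ which is of the two-L-cusp form from the first case subject to the weighted-homogeneous relation $4c_A^3 + 27c_B^2 = 0$; a short calculation shows this cuts out a single orbit in $\bP(4,6)$, namely the intersection point. Polystability is immediate from the fact that the $\mathrm{SL}_2$-stabilizer contains the diagonal torus fixing $0, \infty$, which acts trivially on the invariants $c_A, c_B$, so the orbit is closed in $V_{24}^{ss}$. The main technical obstacle is completeness of the classification: one must show every strictly semistable orbit has a polystable representative in one of the two $\bP^1$'s described above, and in particular that no other polystable boundary orbit can arise. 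I would handle this by a finite case analysis on $(v_p(A), v_p(B))$ at a point where $\min = 12$: the degree bounds on $A, B$ leave only a handful of numerical possibilities, and the 1-PS limit centered at $p$ in each case either produces a second L cusp (landing in the first case) or annihilates $\mathscr{D}$ identically (landing in the second).
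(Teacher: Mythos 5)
Your overall route is the right one, and in fact it is essentially the argument of the cited source: the paper itself gives no proof of this statement (it is imported verbatim from \cite[Proposition 7.4]{oo}), and Odaka--Oshima's proof proceeds exactly as you outline, via the Hilbert--Mumford criterion in Miranda's framework, the threshold $\min(3v_p(A),2v_p(B))=12$, the reduction of the $\mathscr{D}\equiv 0$ locus to binary quartics, and the identification of the polystable orbits $(c_A(\ell_1\ell_2)^4,\,c_B(\ell_1\ell_2)^6)$ giving $\oMGL\cong\bP(4,6)\cong\bP^1$ with the $4A^3+27B^2=0$ orbit as the intersection point. Your numerical criterion, the binary-quartic dictionary ($v_p(A)=2v_p(u)$, $v_p(B)=3v_p(u)$, so the threshold becomes $v_p(u)=2$), and the identification of the intersection point are all correct.

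There are, however, two concrete flaws. First, the sentence ``The stable locus thus coincides with the ADE locus $\MG$'' is false and, taken literally, contradicts the final claim of the theorem you are proving: the stable locus is the locus of \emph{minimal} Weierstrass data, which contains not only $\MG$ but also the $4\mathrm{N}_1$ isotrivial $j$-invariant $\infty$ surfaces (these have $\mathscr{D}\equiv 0$, hence are not ADE, yet satisfy $\min(3v_p(A),2v_p(B))=6<12$ at every point). That locus is precisely $\oMGslco$, and its GIT-stability is exactly the assertion ``$\oMGslc$ is part of the GIT-stable locus''; your second paragraph implicitly corrects this, but as written the first paragraph's conclusion would make the entire boundary strictly semistable. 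Second, polystability of the two-L-cusp orbits is not ``immediate'' from the stabilizer containing the diagonal torus: a reductive (even toral) stabilizer does not imply a closed orbit (e.g.\ $\G_m$ acting on $\A^2$ with weights $(1,-1)$ has non-closed orbits with trivial stabilizer). You need an actual closedness check, which here is short: the closure of the orbit of $(c_A(\ell_1\ell_2)^4, c_B(\ell_1\ell_2)^6)$ only adds points where $\ell_1$ and $\ell_2$ become proportional (then $v_p(A)\ge 8$, $v_p(B)\ge 12$ at a point, hence unstable) or where the coefficients degenerate to $(0,0)$, so the orbit is closed inside $V_{24}^{ss}$; alternatively, observe that the destabilizing one-parameter subgroup at a threshold point has limit exactly $(a_4 s^4t^4, b_6 s^6t^6)$ with $(a_4,b_6)\neq(0,0)$, and these $T$-fixed points admit no further semistable degenerations. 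With those two repairs, and the finite case analysis on $(v_p(A),v_p(B))$ you already propose for completeness of the classification, the argument goes through.
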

	
	A natural question is how the GIT compactification $\oMG$ compares to the SBB compactification $\sbb$. This is the content of \cite[Theorem 7.9]{oo}, where we denote $\oMGslco := \oMGslc \setminus \oMGL$.
	
	\begin{theorem}\cite[Theorem 7.9]{oo}
		The period map $\MG \to W$  extends to an isomorphism $\oMG \cong \sbb$. Moreover, the above isomorphism identifies $\oMGslco \cup \oMGLo$ with the 1-dimensional cusps, and identifies $\oMGslc \cap \oMGL$ with the 0-dimensional cusp.
	\end{theorem}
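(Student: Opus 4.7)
The plan is to prove the theorem in three steps: extend the period map to a morphism $\oMG \to \sbb$, identify the images of the GIT boundary strata, and upgrade the resulting bijective proper morphism to an isomorphism. First, the period map $\MG \to W$ is already known to be an isomorphism of analytic spaces (via Dolgachev together with the fact that $\MG$ parametrizes ADE Weierstrass K3 surfaces). To extend it across $\oMG \setminus \MG$, I would invoke the minimality property of $\sbb$ recalled in Section \ref{sec:bb}: a locally liftable map to $W$ from a smooth simple-normal-crossings compactification extends to $\sbb$. Choosing a log resolution $\widetilde{\calM} \to \oMG$, degenerating families of ADE Weierstrass K3s have quasi-unipotent monodromy (so are locally liftable after a finite base change), giving a map $\widetilde{\calM} \to \sbb$ that descends to $\oMG \to \sbb$ by normality of $\oMG$ and connectedness of the fibers of the resolution.

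Next, I would match the boundary strata. By Theorem \ref{thm:gitsss} the GIT boundary is the union of two rational curves $\oMGslc$ and $\oMGL$ meeting in a single polystable orbit. The surfaces in $\oMGslco$ are isotrivial $j=\infty$ Weierstrass fibrations, and their isotrivial parameter is naturally a $j$-invariant of an elliptic curve; these are limits of $4\mathrm{I}_0^*$ isotrivial degenerations, and the corresponding Kulikov Type II model contains an elliptic double curve whose $j$-invariant should match this parameter, identifying $\oMGslco$ with a 1-dimensional cusp of $\sbb$. The $L$-cusp surfaces in $\oMGLo$ arise as degenerations where a full $\mathrm{I}_{12}$-type collision of vanishing cycles occurs, and via the analogous Hodge-theoretic dictionary $\oMGLo$ should be identified with the other 1-dimensional cusp. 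Finally the unique polystable intersection point, the isotrivial $j = \infty$ surface carrying two $L$-cusps, is the maximally degenerate object and must map to the 0-dimensional cusp of $\sbb$.

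With the boundary matching in place, the extended morphism $\oMG \to \sbb$ is a proper morphism of irreducible normal projective varieties, restricting to an isomorphism on the open stratum $\MG \to W$ and inducing bijections on the corresponding components of the boundaries (each a morphism of rational curves sending a $j$-line to a $j$-line bijectively). It is therefore a proper birational set-theoretic bijection between normal varieties, and hence an isomorphism by Zariski's Main Theorem.

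The main obstacle is the Hodge-theoretic content of the second step: explicitly matching the GIT parameter on each of $\oMGslco$ and $\oMGLo$ with the $j$-invariant of the elliptic double curve in the Kulikov model governing the corresponding 1-dimensional cusp of $\sbb$. This requires a careful analysis of the limiting mixed Hodge structure of a 1-parameter degeneration whose GIT-polystable limit is a given isotrivial or $2L$-cusp Weierstrass surface, matching the monodromy-invariant vanishing cycles on the degenerating K3 with the $j$-invariants visible from the Weierstrass equation; it is this local computation around each GIT orbit, rather than the extension/bijection formalism, that is the technical heart of the argument.
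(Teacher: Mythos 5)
This statement is not actually proved in the paper: it is quoted verbatim from Odaka--Oshima, with the citation \cite[Theorem 7.9]{oo} appearing in the theorem header, and the authors supply no argument of their own. So there is no ``paper's own proof'' to compare against, only the source reference.

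That said, your sketch is a reasonable blueprint for how such a result is typically established, and it broadly matches the shape of the argument in Odaka--Oshima: extend the period map using Borel's extension theorem (the minimality property you recall from Section~\ref{sec:bb}), check what happens on the GIT boundary, and then conclude via Zariski's Main Theorem from a proper birational bijection between normal varieties. A few things worth flagging. First, you are fairly casual about quasi-unipotence and local liftability; the actual extension theorem you want is Borel's, which applies directly to a period map from a punctured polydisc into an arithmetic quotient of a bounded symmetric domain, and the hypothesis one needs is that the monodromy around each branch of the boundary lies in the arithmetic group $\Gamma_U$ (up to finite index) rather than anything about resolutions of $\oMG$. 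Second, and more seriously, you have correctly identified the technical core --- matching the GIT parameter on each of $\oMGslco$ and $\oMGLo$ with the $j$-invariant of the elliptic double curve in the relevant Type~II Kulikov model --- but you leave this entirely unaddressed; this is not a one-line Hodge-theoretic translation and is precisely where the real work in \cite[Theorem 7.9]{oo} lives. As a proof, your text is a strategy outline with the hard step deferred; as a commentary on what the cited result requires, it is fair and accurate. For the purposes of this paper you should simply treat the theorem as an external input, as the authors do.
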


	\section{Moduli of $\calA$-broken elliptic surfaces and wall-crossing}\label{sec:broken}
	
	In this section we review and supplement the results from our previous work on compactifications of the moduli spaces of elliptic surfaces via KSBA stable pairs. 
	
	\begin{definition} A \textbf{KSBA stable pair} $(X,D)$ is a pair consisting of a variety $X$ and a Weil divisor $D$ such that 
		\begin{enumerate} 
			\item $(X,D)$ has semi-log canonical (slc) singularities, and
			\item $K_X + D$ is an ample $\mathbb{Q}$-Cartier divisor. 
		\end{enumerate}
	\end{definition}
	
	Stable pairs are the natural higher dimensional generalization of stable curves and their moduli space compactifies the moduli space of log canonical models of pairs of log general type. 
	
	In \cite{master}, we defined KSBA compactifications $\calE_\calA$ of the moduli space of log canonical (lc) models $(f: X \to C, S + F_\calA)$ of $\calA$-weighted Weierstrass elliptic surface pairs. For each admissible weight vector $\calA$, we obtain a compactification $\calE_\calA$, which is representable by a proper Deligne-Mumford stack of finite type \cite[Theorem 1.1 \& 1.2]{master}. These spaces parameterize slc pairs $(f: X \to C, S + F_{\calA})$, where $(f: X \to C, S)$ is an slc elliptic surface with section, and $F_{\calA} = \sum a_i F_i$ is a weighted sum of marked fibers with $\calA = (a_1, \dots, a_n)$ and $0 < a_i \leq 1$, and $(X , S + F_\calA)$ is a stable pair. 
	
	Before stating the main result Theorem \ref{thm:masterthm}, we must first discuss the different (singular) fiber types that appear in semi-log canonical models of elliptic fibrations as studied in \cite{calculations}. 
	
	\begin{definition}\label{def:fibertypes} Let $(g: Y \to C, S' + aF')$ be a Weierstrass elliptic surface pair over the spectrum of a DVR and let $(f: X \to C, S + F_a)$ be its relative log canonical model. We say that $X$ has a(n):  
		\begin{enumerate} 
			\item \textbf{twisted fiber} if the special fiber $f^*(s)$ is irreducible and $(X,S + E)$ has (semi-)log canonical singularities where $E = f^*(s)^{red}$;   
			\item \textbf{intermediate fiber} if $f^*(s)$ is a nodal union of an arithmetic genus zero component $A$, and a possibly non-reduced arithmetic genus one component supported on a curve $E$ such that the section meets $A$ along the smooth locus of $f^*(s)$ and the pair $(X, S + A + E)$ has (semi-)log canonical singularities. 
		\end{enumerate} 
	\end{definition}
	
	Given an elliptic surface $f: X \to C$ over the spectrum of a DVR such that $X$ has an \emph{intermediate fiber}, we obtain the \emph{Weierstrass model}  of $X$ by contracting the component $E$, and we obtain the \emph{twisted model} by contracting the component $A$. As such, the intermediate fiber can be seen to interpolate between the Weierstrass and twisted models.

	One can consider a Weierstrass elliptic surface $(g: Y \to C, S' + aF')$ over the spectrum of a DVR, where $F'$ is either a Kodaira singular fiber type, or $g$ is isotrivial with constant $j$-invariant $\infty$ with $F'$ being an $\mathrm{N}_k$ fiber type. Then the relative log canonical model $(f: X\to C, S + F_a)$ depends on the value of $a$. When $a = 1$, the fiber is in \emph{twisted} form, when $a = 0$ the fiber is in \emph{Weierstrass form}, and for some $0 < a_0 < 1$, the fiber enters \emph{intermediate} form. The values $a_0$ were calculated for all fiber types in \cite[Theorem 3.10]{master}.
	
	\begin{equation}\label{eq:a0}
		a_0 = \left\{ \begin{array}{lr}
			
			5/6 & \mathrm{II} \\ 3/4 & \mathrm{III} \\ 2/3 & \mathrm{IV} \\  1/2 & \mathrm{N}_1 \end{array}\right. \\
		\  a_0 = \left\{ \begin{array}{lr} 1/6 & \mathrm{II}^* \\ 
			1/4  & \mathrm{III}^* \\
			1/3 & \mathrm{IV}^* \\
			1/2 & \mathrm{I}_n^* \end{array}\right. 
	\end{equation}

	We now state the definition of pseudoelliptic surfaces which appear as components of surfaces in our moduli spaces, a phenomenon first observed by La Nave \cite{ln}.
	
	\begin{definition}\label{def:pseudo} A \textbf{pseudoelliptic pair} is a surface pair $(Z, F)$ obtained by contracting the section of an irreducible elliptic surface pair $(f: X \to C, S + F')$. We call $F$ the \textbf{marked pseudofibers} of $Z$. We call $(f: X \to C, S)$ the associated elliptic surface to $(Z, F)$. \end{definition}
	
	The MMP will contract the section of an elliptic surface if it has non-positive intersection with the log canonical divisor of the surface. There are two types of pseudoelliptic surfaces which appear, and we refer the reader to \cite[Definition 4.6, 4.7]{master} for the precise definitions.

	\begin{definition}\label{def:pseudotypeII} A pseudoelliptic surface of \textbf{Type II}  is formed by the log
		canonical contraction of a section of an elliptic component attached along \emph{twisted} or \emph{stable} fibers. \end{definition}
	
	\begin{definition}\label{def:pseudotypeI} A pseudoelliptic surface of \textbf{Type I} appear in \emph{pseudoelliptic trees} attached by gluing an
		irreducible pseudofiber $G_0$ on the root component to an arithmetic genus one component $E$ of an
		intermediate (pseudo)fiber of an elliptic or pseudoelliptic component. \end{definition}

	Figure \ref{fig:pseudodef} has a tree of pseudoelliptic surfaces of Type I circled on the right, with a pseudoelliptic of Type II circled on the left.   
	
	\begin{theorem}\cite[Theorem 1.6]{master}\label{thm:masterthm} The boundary of the proper moduli space $\calE_{v,\calA}$ parametrizes
		$\calA$-broken stable elliptic surfaces, which are pairs $(f: X \to C, S + F_{\calA})$
		consisting of a stable pair $(X, S + F_\calA)$ with a map to a nodal curve $C$ such that:
		\begin{itemize}
			\item $X$ is an slc union of elliptic surfaces with section S and marked fibers, as well as \item chains of pseudoelliptic surfaces of type I and II (Definition \ref{def:pseudo}) contracted
			by $f$ with marked pseudofibers. \end{itemize} \end{theorem}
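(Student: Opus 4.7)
The plan is to deduce the theorem from the general stable pairs machinery combined with an explicit analysis of the relative log MMP for families of $\calA$-weighted Weierstrass elliptic surfaces. The starting point is that $\calE_{v,\calA}$ has already been constructed as a proper Deligne--Mumford stack of finite type in \cite[Theorems 1.1 \& 1.2]{master}, so the problem reduces to giving a geometric description of the objects parametrized on the boundary. Concretely, given a one-parameter family $(f_\eta \colon X_\eta \to C_\eta, S_\eta + F_{\calA,\eta})$ over a punctured disc $\Delta^\ast$ whose generic fiber is an lc model of a smooth $\calA$-weighted Weierstrass elliptic surface, I would describe the central fiber of its limit in $\calE_{v,\calA}$.

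The first step is to apply semistable reduction after a finite base change to obtain a family over $\Delta$ with mild (e.g. toroidal) singularities, together with a proper flat morphism to a family of nodal base curves $\calC \to \Delta$. The second step is to run the relative log MMP for the pair $(\calX, \calS + \calF_\calA)$ over $\Delta$, either directly or by passing through the canonical model of $(\calX, \calS + \calF_\calA + \calX_0)$. Because all coefficients of horizontal components are positive and $K_X + S + F_\calA$ is big on fibers (by the assumption that the general fiber is an lc model), a stable limit exists and is unique; this produces the semi-log canonical central fiber $(X_0, S_0 + F_{\calA,0})$ together with the induced map $f_0 \colon X_0 \to C_0$ to the (possibly reducible nodal) central fiber of $\calC$.

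The third step, which is the heart of the proof, is to analyze the components of $X_0$. On each component $V \subset X_0$ one has $f_0|_V \colon V \to C_V$ mapping to an irreducible component of $C_0$. There are two possibilities: either $C_V$ is a point, in which case $V$ is contracted and becomes a \emph{pseudoelliptic} component, or $C_V$ is a curve, in which case $V$ inherits the structure of an slc elliptic surface with section $S_0|_V$ and marked fibers from $F_{\calA,0}$. For the contracted components, one identifies the discrepancies and the negativity of the section introduced by the MMP steps to show that the contraction arises either by contracting the section of an elliptic surface (type I) or by further contracting a pseudoelliptic along a curve of type I pseudoelliptics (type II), matching Definition \ref{def:pseudo}. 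Finally, slc compatibility along the gluing loci between pseudoelliptic and elliptic components is checked via inversion of adjunction, ensuring the chains of pseudoelliptics are attached along appropriate marked pseudofibers.

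The main obstacle is the case analysis in the third step: tracking precisely which log MMP steps produce which pseudoelliptic components, and verifying that no other exotic components arise. Controlling this requires the explicit classification of extremal contractions of $(X, S + F_\calA)$ over $\Delta$, distinguishing divisorial contractions of fiber components from contractions of the section itself, and showing that the only further contractions of pseudoelliptics that occur are along type I pseudofiber directions, giving precisely the chains of type I and II pseudoelliptics described. The remaining verifications (properness, that $f_0$ has stable elliptic generic fibers on non-contracted components, and that the marked fibers remain of the prescribed weights) follow from standard properties of the log MMP and from how $\calA$-weighted markings behave under specialization.
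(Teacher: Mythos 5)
This statement is quoted from \cite[Theorem~1.6]{master}; the present paper does not reprove it, so there is no in-paper proof to compare against line by line. That said, your overall strategy---semistable reduction, then running the relative log MMP over the disc on $(\calX, \calS + \calF_\calA)$, then stratifying the components of the central fiber by whether they dominate a curve component of $C_0$ or are contracted by $f_0$---is indeed the strategy used in the cited reference, and your observation that components dominating a curve inherit an slc elliptic fibration structure while contracted components become pseudoelliptic is the correct first dichotomy.

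However, there is a concrete error in the heart of your argument, namely in how you characterize Type I versus Type II pseudoelliptics. You write that ``the contraction arises either by contracting the section of an elliptic surface (type I) or by further contracting a pseudoelliptic along a curve of type I pseudoelliptics (type II).'' This is not what Definitions~\ref{def:pseudotypeII} and~\ref{def:pseudotypeI} say. \emph{Both} Type I and Type II pseudoelliptics arise by contracting the section of an elliptic component; the distinction between them is not ``section contraction vs.\ further contraction'' but rather \emph{how the resulting pseudoelliptic component is glued to the rest of the surface}. A Type II pseudoelliptic is a section-contracted component attached along twisted or stable fibers (these walls track the Hassett walls for the base curve, cf.\ Theorem~\ref{thm:forgetful} and Remark~\ref{rmk:hassettwall2}), while a Type I pseudoelliptic sits in a pseudoelliptic \emph{tree} attached along the genus-one part of an \emph{intermediate} (pseudo)fiber of its parent component, and these arise via the log flip visible in a one-parameter smoothing (Remark~\ref{rmk:flipping}). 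What you describe as ``further contracting a pseudoelliptic'' is a genuinely different phenomenon, namely the $\mathrm{W}_{\mathrm{III}}$ wall crossings of Definition~\ref{def:walltypes}(III) and Remark~\ref{rmk:contraction}, where an entire rational pseudoelliptic component is contracted to a curve or point --- this is a divisorial contraction appearing later in the MMP, not the definition of Type II. Because your case analysis in the third step hinges on identifying which pseudoelliptic arises from which MMP step, this confusion means the proof as written does not actually establish the decomposition claimed in the theorem; it would assign the wrong attaching data to the contracted components and would not show that the non-twisted/stable attachments organize into trees of Type I pseudoelliptics rooted at intermediate fibers.
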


	\begin{figure}[!h]
		\includegraphics[scale=.65]{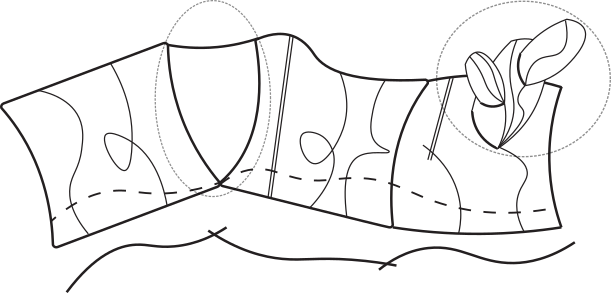}
		\caption{An $\calA$-broken elliptic surface. Two types of pseudoelliptic surfaces (see Definitions \ref{def:pseudotypeII} and \ref{def:pseudotypeI}) circled. Left: Type II and Right: Type I.}\label{fig:pseudodef}
	\end{figure}
	
	Contracting the section of a component to form a pseudoelliptic component corresponds to stabilizing the base curve
	as an $\calA$-stable curve in the sense of Hassett (see \cite[Corollaries 6.7 \& 6.8]{calculations}). In particular we have the following.
	
	\begin{theorem}\cite[Theorem 1.4]{master}\label{thm:forgetful} There are forgetful morphisms $\calE_{v, \calA} \to \overline{\calM}_{g, \calA}$. \end{theorem}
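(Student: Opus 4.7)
The plan is to define the forgetful morphism by sending an $\calA$-broken stable elliptic surface $(f : X \to C, S + F_\calA)$ to the pair $(C, \sum_i a_i f(F_i))$, viewed as an $\calA$-weighted curve, and then to show this pair is automatically Hassett $\calA$-stable. Since $F_\calA = \sum a_i F_i$ is a sum of marked fibers, each $F_i$ lies over a well-defined point $p_i := f(F_i)$ on the base, so the assignment makes sense set-theoretically.

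The key step is showing that $(C, \sum a_i p_i)$ is already an $\calA$-stable curve in the sense of Hassett whenever the surface pair is stable. This is the content of the correspondence cited just before the theorem statement: by Corollaries 6.7 and 6.8 of \cite{calculations}, the log canonical contractions that contract the section of an elliptic component to produce a pseudoelliptic component of Type I or II correspond bijectively to Hassett contractions of the base curve $C$. In particular, a component of $C$ is $\calA$-stable precisely when the fibration over it is not contracted by the log MMP with boundary $S + F_\calA$. Since any component of the base curve over which the fibration is unstable must be contracted (producing pseudoelliptic surfaces, per Theorem~\ref{thm:masterthm}), the resulting $(C, \sum a_i p_i)$ satisfies the Hassett $\calA$-stability condition.

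The second step is to upgrade this to a morphism of moduli stacks. Given a family $(f_T : \calX \to \calC, \calS + \calF_\calA)$ of $\calA$-broken stable elliptic surfaces over a base scheme $T$, the relative fibration structure produces a family $\calC \to T$ of nodal curves with sections $\sigma_i : T \to \calC$ given by the images of the components of $\calF_\calA$ under $f_T$. The fiberwise Hassett stability established above, combined with the flatness and properness of $\calC \to T$, shows that $(\calC \to T, \sigma_1 + \dots + \sigma_n)$ defines an object of $\overline{\calM}_{g,\calA}(T)$. This assignment is visibly compatible with pullback along $T' \to T$, so it yields a morphism of stacks.

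The main obstacle is the stability verification in the first paragraph: one must be certain that no non-stable base component can survive in a stable pair $(X, S+F_\calA)$, and conversely that every component of $C$ supporting a genuine elliptic (not pseudoelliptic) fibration meets the Hassett condition. However, this is already packaged by the cited corollaries from \cite{calculations}, which characterize exactly which base components are contracted by the log MMP in terms of the $\calA$-weighted stability of $(C, \sum a_i p_i)$. Once this dictionary is invoked, the remaining verifications (flatness of $\calC \to T$ and compatibility with base change) are formal.
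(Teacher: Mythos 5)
This statement is imported verbatim from \cite[Theorem 1.4]{master}, so the paper contains no proof of its own beyond the citation and the surrounding discussion; your reconstruction --- sending $(f:X\to C, S+F_\calA)$ to the base curve marked by the weighted images $f(F_i)$, deducing Hassett $\calA$-stability from the correspondence between section contractions and Hassett contractions in \cite[Corollaries 6.7--6.8]{calculations}, and then promoting this to families over (normal) bases --- is exactly the mechanism the paper points to and the route taken in \cite{master}. The only steps worth spelling out, and which are indeed covered by that setup, are that the images of the marked fiber divisors really are sections of $\calC\to T$ (fiberwise single reduced points, with $T$ normal by the conventions of \cite{master}) and that markings colliding at a point of $C$, e.g.\ under a contracted pseudoelliptic tree, have total weight at most $1$.
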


	\begin{remark}\label{rmk:contraction} For an irreducible component with base curve $\bP^1$ and $\deg \mathscr{L} > 0$,  contracting the section of an elliptic component may \emph{not} be the final step in the MMP -- we may need to contract the entire pseudoelliptic component to a curve or a point (see \cite[Proposition 7.4]{calculations}). \end{remark}
	
	\subsubsection{Wall and chamber structure} We are now ready to discuss how the moduli spaces $\calE_\calA$ change as we vary $\calA$. There are three types of walls in our wall and chamber decomposition.
	
	\begin{definition}\label{def:walltypes} \leavevmode \begin{enumerate}
			\item[(I)] A wall of \textbf{Type $\mathrm{W}_{\mathrm{I}}$} is a wall arising from the log canonical transformations, i.e. the walls where the fibers of the relative log canonical model
			transition  between fiber types.
			\item[(II)] A wall of \textbf{Type $\mathrm{W}_{\mathrm{II}}$} is a wall at which the morphism induced by the log canonical contracts the section of some components.
			\item[(III)] A wall of \textbf{Type $\mathrm{W}_{\mathrm{III}}$} is a wall where the morphism induced by the log canonical contracts an entire rational pseudoelliptic component (see Remark \ref{rmk:contraction}). \end{enumerate} \end{definition}

	\begin{remark}\label{rmk:hassettwall2}\leavevmode
		\begin{enumerate}
			\item The walls of Type $\mathrm{W}_{\mathrm{II}}$ are precisely the walls of Hassett's wall and chamber decomposition \cite{has} (see discussion preceding Theorem \ref{thm:forgetful}). 
			\item There are finitely many walls (see \cite[Theorem 6.3]{master}).
		\end{enumerate}
	\end{remark}

	\begin{theorem}\cite[Theorem 1.5]{master}\label{thm:main} Let $\calA, \calB \in \bQ^r$ be weight vectors with $0 < \calA \leq \calB \leq 1$. Then
		\begin{enumerate}
			\item If $\calA$ and $\calB$ are in the same chamber, then the moduli spaces and universal families are isomorphic.
			\item If $\calA \le \calB$ then there are reduction morphisms $\calE_{v,\calB} \to \calE_{v,\calA}$ on moduli spaces which are compatible with the reduction morphisms on the Hassett spaces:
			
			\item The universal families are related by a sequence of explicit divisorial contractions and flips 
			
			\noindent More precisely, across $\wi$ and $\wiii$ walls there is a divisorial contraction of the universal family and across a $\wii$ wall the universal family undergoes a log flip. 
	\end{enumerate}\end{theorem}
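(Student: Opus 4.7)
The plan is to reduce the global statement to a local analysis near a single wall, using that there are only finitely many walls (Remark \ref{rmk:hassettwall2}(2)). For part (1), within a fixed chamber the relative log canonical divisor $K_{X/C} + S + F_\calA$ varies linearly in $\calA$ and its intersection numbers with all finitely many relevant test curves (components of singular fibers, section, and the exceptional curves arising in the MMP) retain fixed sign. Hence the relative log canonical model of any family stays combinatorially unchanged as $\calA$ moves through the chamber, so both the moduli functor and its universal family are constant, proving (1).

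For part (2), I would work at the level of the moduli functor. Given a $\calB$-stable family $(f: X \to C, S + F_\calB)$ over a base $T$, form the relative log canonical model of $(X, S + F_\calA)$ over $T$. Since $\calA \leq \calB$, some components acquire non-positive log canonical degree and must be contracted or flipped, producing an $\calA$-stable family. Properness and boundedness of $\calE_{v,\calA}$ established in \cite[Theorem 1.1]{master}, together with separatedness of stable pair moduli, guarantee that the limit is well-defined and unique. This yields a morphism $\calE_{v,\calB} \to \calE_{v,\calA}$ by the universal property. Compatibility with Hassett's reduction morphisms follows from naturality of the forgetful morphism of Theorem \ref{thm:forgetful}: contracting a pseudoelliptic component upstairs is dual to collapsing a component of the base curve where marked points collide, per Definitions \ref{def:pseudotypeII} and \ref{def:pseudotypeI}.

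For part (3), the plan is to classify each wall by the local MMP operation it induces. At a $\wi$ wall, the relevant wall value is one of those listed in \eqref{eq:a0}; the affected fiber transitions between twisted, intermediate, and Weierstrass forms (Definition \ref{def:fibertypes}), and the universal family is modified by contracting either the arithmetic genus zero component $A$ or the arithmetic genus one component $E$ of the intermediate fiber, which sweeps out a divisor in the threefold and so yields a divisorial contraction. At a $\wiii$ wall, an entire rational pseudoelliptic component has log canonical degree approaching zero; by Remark \ref{rmk:contraction} it is contracted to a curve or a point, again a divisorial operation since the component is a divisor in the universal family.

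The main technical obstacle is the $\wii$ analysis, where a section is contracted and a pseudoelliptic is formed. Although the section is a divisor in each fiber, the correct global picture is that the birational transformation between the two universal families is a log flip: it is an isomorphism in codimension one away from the strictly log canonical locus at the wall, and the flipped locus matches the newly introduced base of the pseudoelliptic component carrying the transferred log canonical positivity. Establishing this requires precise relative MMP computations over a one-parameter deformation crossing the wall, verifying the existence of the flip from the MMP for families of slc surface pairs and identifying the flipping contraction with the small contraction arising when the log canonical algebra becomes finitely generated at the wall value. Carrying this out case-by-case according to the fiber configuration attached to the contracted section, and checking that the resulting flip glues into a global modification of the universal family over $\calE_{v,\calB}$, is the heart of the argument.
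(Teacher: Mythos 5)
This statement is not proved in the present paper: it is imported verbatim from the authors' earlier work (the citation \cite[Theorem~1.5]{master} immediately follows \verb|\begin{theorem}|), so there is no internal proof to compare against line by line. What the paper does offer is the surrounding scaffolding — the wall-type taxonomy of Definition~\ref{def:walltypes}, the finiteness of walls in Remark~\ref{rmk:hassettwall2}, and Remark~\ref{rmk:flipping}, which points to \cite[Section~8]{master} and to La Nave's observation that the section contraction is a log flipping contraction inside a one-parameter total space. Your outline is consistent with all of this: the chamber-invariance argument for (1), the relative-lc-model construction of reduction morphisms for (2), and the split of (3) into divisorial operations at $\wi$/$\wiii$ and a flip at $\wii$ match the framework this paper invokes. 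In particular, you correctly single out the $\wii$ analysis as the technical core and tie it to the picture in Remark~\ref{rmk:flipping}. Two caveats worth tightening if this were to become a real proof: (i) in part (2) you need the reduction morphism on the level of \emph{stacks}, not just coarse limits, so the "relative lc model over $T$" should be shown to commute with base change, not merely to produce a well-defined object for each DVR trait; and (ii) at a $\wiii$ wall the contracted pseudoelliptic component can collapse to a \emph{curve} rather than a point (Remark~\ref{rmk:contraction}), which you should check still yields a divisorial contraction of the universal threefold rather than something small. Neither of these is a fatal gap, but they are exactly the places where a blind reconstruction is most likely to diverge from the cited source.
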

	
	\begin{remark}\label{rmk:flipping} For more on Theorem \ref{thm:main} (3), we refer the reader to \cite[Section 8]{master}. La Nave (see \cite[Section 4.3, Theorem 7.1.2]{ln}) noticed that the contraction of the section of a component is a log flipping contraction inside the total space of a one parameter degeneration.  In particular, the Type I pseudoelliptic surfaces are thus attached along the reduced component of an intermediate (pseudo)fiber (see \cite[Figure 13]{master}). \end{remark}
	
	\subsection{Strictly (semi-)log canonical Weierstrass models} 
	
	In order to understand the stable pair degenerations of log canonical models of Weierstrass elliptic surfaces, we need to understand strictly log canonical and semi-log canonical Weierstrass fibrations. We collect some results in this direction here, beginning with the definition of a \emph{type $\mathrm{L}$} singular fiber. 
	
	\begin{definition}(see \cite[Section 3.3]{ln})\label{def:l} Let $f: X \to C$ be a Weierstrass fibration with smooth generic fiber and Weierstrass data $(A,B)$. If $12 = \textrm{min}(3v_q(A), 2v_q(B))$ where $v_q$ denotes the order of vanishing at a point $q \in \bP^1$ we say that $f$ has a \textbf{type L fiber} at $q$. \end{definition}

	\begin{lemma}\label{lem:lcusp} If $F$ is a type L cusp of $X$ then $X$ has strictly log canonical singularities in a neighborhood of $F$ and the log canonical threshold $\mathrm{lct}(X,0,F)  = 0$. \end{lemma}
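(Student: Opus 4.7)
The plan is to work in local coordinates and realize the cusp as a weighted-homogeneous hypersurface singularity whose log canonical data can be read off from a single weighted blowup. Choose a uniformizer $t$ on $C$ at $q$ and present $X$ locally as the hypersurface $f = y^2 - x^3 - A(t)x - B(t) = 0$ in $\bA^3_{x,y,t}$. The type $\mathrm L$ hypothesis $\min(3v_q(A),2v_q(B))=12$ forces $v_q(A)\geq 4$ and $v_q(B)\geq 6$, with equality in at least one.

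The key computation is the weighted blowup $\pi\colon\widetilde{\bA^3}\to\bA^3$ at the origin with weights $(w_x,w_y,w_t)=(2,3,1)$. The weighted order of $f$ is exactly $6$: the monomials $y^2$ and $x^3$ always contribute, and the hypothesis guarantees that at least one of $A(t)x$ (weight $v_q(A)+2$) or $B(t)$ (weight $v_q(B)$) contributes a weight-$6$ term. Combining
$$K_{\widetilde{\bA^3}}=\pi^*K_{\bA^3}+5E_0, \qquad \pi^*X=\widetilde X + 6E_0$$
with adjunction on the proper transform $\widetilde X$ yields $K_{\widetilde X}=\pi^*K_X - E$, where $E=\widetilde X\cap E_0$. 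Therefore $a(E,X)=-1$, which already rules out klt at the cusp.

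To upgrade this to log canonicity I would verify that $(\widetilde X,E)$ is lc near $E$. Because the initial form $y^2-x^3-\alpha t^4 x-\beta t^6$ does not vanish at the orbifold points of $\bP(2,3,1)$, the proper transform $\widetilde X$ lies in the smooth locus of $\widetilde{\bA^3}$. In the $t$-chart the substitution $x\mapsto t^2 x$, $y\mapsto t^3 y$ converts $\widetilde X$ into a Weierstrass model $y^2=x^3+(A/t^4)x+(B/t^6)$ whose new coefficients have vanishing orders satisfying $\min(3v_q,2v_q)=0$; thus $\widetilde X$ has at worst a Kodaira–N\'eron (du Val) singularity along $E$ and is lc. This is the step where I expect the most care is required, since one must confirm that no further divisorial valuation over $\widetilde X$ produces a discrepancy below $-1$.

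Finally, for the threshold: $F=\{t=0\}$ and $t$ has weight $1$, so $\pi^*F=\widetilde F+E_0$ and $\mathrm{mult}_E F=1$ on $\widetilde X$. Consequently
$$a(E,\,X+cF)=-1-c<-1\quad\text{for every }c>0,$$
so $(X,cF)$ fails to be log canonical for any positive $c$ and hence $\mathrm{lct}(X,0,F)=0$.
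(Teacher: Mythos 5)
Your proof is correct and it hinges on the same geometric mechanism as the paper's: a weighted blowup of the cuspidal point extracts a (possibly nodal) elliptic curve $E$ of log discrepancy zero, and since $F$ passes through the center, $\mathrm{mult}_E\mu^*F\geq 1$ forces $a(E,X+cF)\leq -1-c$ and hence $\mathrm{lct}(X,0,F)=0$. The execution differs in an instructive way. The paper never chooses weights or charts: writing $\mu^*K_X=K_Y+aE$ on the surface, it combines the projection formula ($K_Y\cdot E+aE^2=0$) with adjunction ($K_Y\cdot E+E^2=\deg K_E=0$) and $E^2\neq 0$ to get $a=1$, and then asserts strict log canonicity, implicitly using that such elliptic/cusp points are lc. You instead fix the ambient weights $(2,3,1)$, read off $a(E,X)=-1$ from $K_{\widetilde{\mathbb{A}^3}}=\pi^*K+5E_0$ and $\pi^*X=\widetilde X+6E_0$ (using that the weighted order is exactly $6$), and then certify log canonicity concretely: in the $t$-chart the proper transform is again a Weierstrass model with $\min(3v,2v)=0$, so the fiber $E$ is reduced of multiplicative type and $\widetilde X$ has at worst $A_{n-1}$ points along it. That last step is the one place where you should state the needed assertion precisely: what must be lc is the \emph{pair} $(\widetilde X,E)$, not just $\widetilde X$; this does hold, because at the node the local model is $(\{xy=t^n\},\{t=0\})$, which is the $\mu_n$-quotient (\'etale in codimension one) of $(\mathbb{A}^2,\{uv=0\})$, hence lc, and then $K_{\widetilde X}+E=\pi^*K_X$ transfers lc-ness to $X$ while $a(E,X)=-1$ rules out klt. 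With that sentence added, your argument is complete and in fact fills in the lc verification that the paper leaves implicit, at the cost of an explicit chart computation where the paper's intersection-theoretic trick needs none.
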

	
	\begin{proof} After performing a weighted blowup $\mu: Y \to X$ at the cuspidal point of $F$, we get an exceptional divisor $E$ a possibly nodal elliptic curve and strict transform $A:= \mu_*^{-1}(F)$ a rational curve meeting $E$ transversely. Writing $\mu^*K_X = K_Y + aE$, it follows from the projection formula that $K_Y.E + aE^2 = 0$. On the other hand, $K_Y.E + E^2 = K_E = 0$ by the adjunction formula and $E^2 \neq 0$ since it is exceptional. Therefore $a = 1$ so $X$ has a strictly log canonical singularity at the cuspidal point of $F$ and the discrepancy of $(X,\epsilon F)$ for any $\epsilon > 0$ will be strictly greater than $1$.
	\end{proof} 
	
	\begin{remark}\label{rmk:lcusp} The type L cusp decreases the self intersection $S^2$ by 1, and thus increases $\deg \mathscr{L}$ by 1 (see \cite[Remark 5.3.8]{ln}). \end{remark}
	
	We now discuss some facts on non-normal Weierstrass fibrations with generic fiber a nodal elliptic curve. These appear as semi-log canonical degenerations of normal elliptic surfaces and as isotrival $j$-invariant $\infty$ components of broken elliptic surfaces.

	We first recall the definition of the fiber types $\mathrm{N}_k$ that these fibrations possess (\cite[Section 5]{calculations} and \cite[Lemma 3.2.2]{ln}). 
	
	\begin{definition}\label{def:nk} The fibers $\mathrm{N}_k$ are the fiber types with Weierstrass equation $y^2 = x^2(x-t^k)$. \end{definition}
	
	\begin{lemma}\cite[Lemma 3.2.2]{ln} Fibers of type $\mathrm{N}_k$ are slc if and only if $k \in \{0, 1, 2 \}.$ \end{lemma}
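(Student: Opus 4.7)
The plan is to analyze the singularities of the surface $X = \{y^2 = x^2(x - t^k)\} \subset \mathbb{A}^3$ via its normalization, and then verify the four standard conditions for $X$ to be semi-log canonical: $S_2$, nodal in codimension one, $\mathbb{Q}$-Gorenstein, and the normalized pair $(\bar X, \bar D)$ is log canonical, where $\bar D$ is the conductor divisor. Since $X$ is a hypersurface in the smooth threefold $\mathbb{A}^3$, the first and third conditions are automatic: $X$ is Cohen--Macaulay (hence $S_2$) and $K_X$ is Cartier by adjunction. The non-normal locus of $X$ is the line $\{x = y = 0\}$, and at a generic point $(0, 0, t_0)$ with $t_0^k \neq 0$ the leading-order equation $y^2 + t_0^k x^2 = 0$ defines a node, so $X$ is nodal in codimension one for every $k$.

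Next I would compute the normalization explicitly. Setting $u := y/x$ transforms the defining equation into $u^2 = x - t^k$, so $\bar X \cong \mathbb{A}^2_{u,t}$ via $\nu : (u,t) \mapsto (u^2 + t^k,\, u(u^2 + t^k),\, t)$. A direct calculation shows that the fibers of $\nu$ over the non-normal locus identify $(u, t)$ with $(-u, t)$ whenever $u^2 + t^k = 0$, so the conductor divisor is $\bar D = \{u^2 + t^k = 0\} \subset \bar X$. Since $\bar X$ is smooth, the lc-ness of $(\bar X, \bar D)$ reduces to computing $\mathrm{lct}_0(u^2 + t^k)$ at the origin. This singularity is quasi-homogeneous with weights $(\tfrac{1}{2}, \tfrac{1}{k})$, so its log canonical threshold is $\tfrac{1}{2} + \tfrac{1}{k}$, which is $\geq 1$ exactly when $k \leq 2$. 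Hence the fourth slc condition fails for $k \geq 3$ and holds for $k \in \{0, 1, 2\}$ (where $\bar D$ is smooth or an ordinary node and the pair is even snc), which is the desired equivalence.

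The step I expect to be the main obstacle is the case $k = 1$. Here $\bar D$ is a smooth parabola, but the involution $u \mapsto -u$ on $\bar D$ fixes the origin, so at $(0,0,0) \in X$ the singularity is a pinch point rather than a transverse node. I must confirm that this does not violate the codimension-one nodality condition. This is true because the pinch point is nodal in codimension one of the double curve: the non-nodal behaviour occurs only at a single codimension-two point of $X$, which is permitted. I would justify this either by appealing to the classification of semismooth surface singularities, or by checking analytically that $X$ is locally isomorphic near $(0,0,0)$ to the standard Whitney umbrella $\{y^2 = x^2 t\}$, which is well known to be slc.
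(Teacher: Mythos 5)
Your proof is correct, and the paper itself does not reprove this lemma -- it is quoted from La Nave \cite[Lemma 3.2.2]{ln}, whose argument (and the paper's own Remark on $\mathrm{N}_2$ fibers) proceeds exactly as you do: normalize $y^2=x^2(x-t^k)$ to $\mathbb{A}^2_{u,t}$, identify the conductor $\{u^2+t^k=0\}$, and test log canonicity of the pair, with the $k=1$ case being the pinch point $y^2=x^2t$, which is allowed since nodality is only required in codimension one. The only cosmetic caveat is that the log canonical threshold is $\min\bigl(1,\tfrac12+\tfrac1k\bigr)$, which does not affect your criterion $\tfrac12+\tfrac1k\geq 1 \iff k\leq 2$.
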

	
	\begin{remark} \leavevmode
		\begin{enumerate}
			\item The general fiber of an isotrivial $j$-invariant $\infty$ fibration is type $\mathrm{N}_0$.
			\item $\mathrm{N}_2$ is the $j$-invariant $\infty$ version of the L cusp (see Remark \ref{rmk:n2}).
		\end{enumerate}
	\end{remark} 
	
	\begin{remark}\label{rmk:n2} The $\mathrm{N}_2$ fiber behaves analogously to the type L fiber. Indeed by the proof of \cite[Lemma 5.1]{calculations}, on the normalization $(X^\nu, D)$ of a surface $X$ with an $\mathrm{N}_2$ fiber, the double locus $D$ consists of a nodal curve with node lying over the cuspidal point of the $\mathrm{N}_2$ fiber, and $X^\nu$ is smooth in a neighborhood of this point. In particular, $(X^\nu, D)$ has log canonical singularities in a neighborhood of the nodal point of $D$ and $\mathrm{lct}(X^\nu, D, A) = 0$ for any curve $A$ passing through this point. Therefore by definition of semi-log canonical, $X$ has strictly semi-log canonical singularities in a neighborhood of the $\mathrm{N}_2$ fiber $F$ and $\mathrm{slct}(X,0,F) = 0$. \end{remark}

	The local equation given above for a type $\mathrm{N}_k$ fiber is not a standard Weierstrass equation. One can check that the standard equation of an $\mathrm{N}_k$ fiber is given by
	\begin{equation}\label{eqn:Nk}
		y^2 = x^3 - \frac{1}{3}t^{2k}x + \frac{2}{27} t^{3k}. 
	\end{equation}

	\begin{prop}\label{prop:jinftyfibers} If $(f : X \to C, S)$ is an isotrivial $j$-invariant $\infty$ slc Weierstrass fibration with $a_k$ type $\mathrm{N}_k$ fibers, then 
		$
		-S^2 = \deg(\mathscr{L}) = \sum_k a_k \frac{k}{2}.$
	\end{prop} 
	
	\begin{proof} Let $A$ and $B$ the Weierstrass data of $(f : X \to C, S)$. If $q \in C$ lies under an $\mathrm{N}_k$ fiber, then $A$ vanishes to order $2k$ and $B$ to order $3k$ at $q$. Then $A,B$ have degree $\sum 2ka_k$ and $\sum 3ka_k$ respectively. The result follows since the degree of $A$ and $B$ are $4\deg \mathscr{L}$ and $6\deg \mathscr{L}$ respectively. \end{proof} 
	
	Note that for $k$ even, the $\mathrm{N}_k$ fiber has trivial monodromy and for $k$ odd it has $\mu_2$ monodromy. This determines the twisted models of these fibers. 
	
	\begin{cor}\label{cor:monodromyNk} Let $F$ be an $\mathrm{N}_k$ fiber. Then the twisted model of $F$ is an $\mathrm{N}_0$ (respectively twisted $\mathrm{N}_1$) if $k$ is even (respectively odd).\end{cor}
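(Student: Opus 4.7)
The proof is a direct Weierstrass computation using equation~\eqref{eqn:Nk}, combined with the identification of twisted models via root-stack base change, paralleling the analogous computation of twisted models for Kodaira fibers in \cite{ln} and \cite{master}.

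First I would recall that an admissible Weierstrass coordinate change $(x,y) \mapsto (\lambda^2 x, \lambda^3 y)$ transforms the Weierstrass data $(A,B) = \bigl(-\tfrac{1}{3}t^{2k},\, \tfrac{2}{27}t^{3k}\bigr)$ of an $\mathrm{N}_k$ fiber via $(A,B) \mapsto (\lambda^{-4}A, \lambda^{-6}B)$. The twisted model is obtained by performing all admissible such scalings on the appropriate root stack of the base, so as to bring the central fiber into its most reduced Weierstrass form; the resulting monodromy of this reduction produces the gerbe structure on the stacky fiber.

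For $k = 2\ell$ even, the section $\lambda = t^{\ell}$ is a regular function on a neighborhood of $0 \in C$, so the scaling is admissible over the coarse base. It transforms the equation to $Y^2 = X^3 - \tfrac{1}{3}X + \tfrac{2}{27}$, a trivial nodal fibration whose central fiber is of type $\mathrm{N}_0$. Since the monodromy is trivial no gerbe is required, and this is the twisted model.

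For $k = 2\ell + 1$ odd, the maximal integer scaling on the coarse base is $\lambda = t^{\ell}$, which only brings the equation into $\mathrm{N}_1$ form, so no further reduction is possible without base change. Using the $\mu_2$ monodromy, I would pass to the order-$2$ root stack $\calC \to C$ with coordinate $s$ satisfying $s^2 = t$. Pulling back, the equation becomes $y^2 = x^3 - \tfrac{1}{3}s^{4k}x + \tfrac{2}{27}s^{6k}$, and $\lambda = s^{k}$ is now an admissible scaling on $\calC$, reducing the central fiber to $\mathrm{N}_0$. The twisted model is therefore the $\mathrm{N}_0$ fibration on $\calC$, which descends to an $\mathrm{N}_1$ fiber on the coarse base carrying a $\mu_2$-gerbe at the nodal point, i.e., a twisted $\mathrm{N}_1$ fiber.

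The main obstacle is the odd case: one must confirm that the $\mu_2$-quotient of the $\mathrm{N}_0$ fibration on $\calC$ is precisely an $\mathrm{N}_1$ (and not some other slc fiber type), via a local calculation at the nodal point where the $\mu_2$-action exchanges the two analytic branches. This parallels Lemma~3.2.2 of \cite{ln} and Remark~\ref{rmk:n2}, and is consistent with the monodromy description of $\mathrm{N}_1$ derived from the defining equation $y^2 = x^2(x - t)$.
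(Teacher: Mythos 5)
Your proposal is correct and follows essentially the same route as the paper: the paper's proof also reduces the claim to the observation that the even case requires no base change (so the twisted model is the stable $\mathrm{N}_0$) while the odd case requires a $\mu_2$ base change (so the twisted model is the nodal cubic modulo the $\mu_2$ inversion, i.e.\ a twisted $\mathrm{N}_1$), deferring the local analysis to \cite[Section 6.2]{tsm}. Your explicit Weierstrass scalings $\lambda = t^{\ell}$ and, after the order-two cover $s^2 = t$, $\lambda = s^{k}$ simply carry out that cited local computation in detail, so the two arguments coincide in substance.
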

	
	\begin{proof} By the local analysis of \cite[Section 6.2]{tsm}, in the even case the twisted model must be stable since there is no base change required, and the odd case there is a $\mu_2$ base change so the twisted model is a nodal cubic curve modulo the $\mu_2$ action, i.e. a twisted $\mathrm{N}_2$. \end{proof}
	
	Thus given an $\mathrm{N}_{k}$ fiber, we can cut it out and glue in an $\mathrm{N}_{k + 2}$ fiber since the families are isomorphic to $\mathrm{N}_0$ (respectively $\mathrm{N}_1$) families over a punctured neighborhood. We can ask how this surgery affects $-S^2 = \deg \mathscr{L}$.
	
	\begin{cor}\label{cor:jinftyfibers} Let $(f : X \to C, S)$ be an isotrivial $j$-invariant $\infty$ Weierstrass fibration and let $(f : X' \to C, S')$ be the result of replacing an $\mathrm{N}_k$ fiber by an $\mathrm{N}_{k + 2}$ fiber. Then $-(S')^2 = -S^2 + 1$. 
	\end{cor}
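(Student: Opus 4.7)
The plan is to deduce this corollary directly from Proposition \ref{prop:jinftyfibers}, which gives the formula $-S^2 = \deg(\mathscr{L}) = \sum_k a_k \tfrac{k}{2}$ where $a_k$ counts the number of $\mathrm{N}_k$ fibers. First, I would make sure the surgery is well-defined as an operation on Weierstrass fibrations: by Corollary \ref{cor:monodromyNk}, the twisted model of an $\mathrm{N}_k$ fiber only depends on the parity of $k$, so over a punctured disk a neighborhood of an $\mathrm{N}_k$ fiber and a neighborhood of an $\mathrm{N}_{k+2}$ fiber are isomorphic families; one can therefore cut out a disk around an $\mathrm{N}_k$ fiber and glue in an $\mathrm{N}_{k+2}$ fiber to obtain a new Weierstrass fibration $(f : X' \to C, S')$ with the same numerical data everywhere except at this one fiber.

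Next, I would simply apply Proposition \ref{prop:jinftyfibers} to both $X$ and $X'$. If $(a_j)$ are the multiplicities of the $\mathrm{N}_j$ fibers on $X$ and $(a'_j)$ those on $X'$, then by construction $a'_k = a_k - 1$, $a'_{k+2} = a_{k+2} + 1$, and $a'_j = a_j$ for all other $j$. Thus
\[
-(S')^2 \;=\; \sum_j a'_j \tfrac{j}{2} \;=\; \sum_j a_j \tfrac{j}{2} - \tfrac{k}{2} + \tfrac{k+2}{2} \;=\; -S^2 + 1,
\]
as desired.

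There is essentially no main obstacle here beyond justifying that the surgery produces a legitimate isotrivial $j$-invariant $\infty$ Weierstrass fibration to which Proposition \ref{prop:jinftyfibers} applies. The input $X$ is already isotrivial of $j$-invariant $\infty$, and the local Weierstrass equation (\ref{eqn:Nk}) shows that replacing $t^{2k}$ by $t^{2(k+2)}$ and $t^{3k}$ by $t^{3(k+2)}$ in a local chart produces Weierstrass data $(A', B')$ with the correct vanishing orders and no change elsewhere, so $X'$ remains an slc Weierstrass fibration of the same type with the modified collection of $\mathrm{N}_j$ fibers. Once this is noted, the computation above finishes the proof.
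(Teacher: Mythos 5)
Your argument is correct and is essentially the paper's own (implicit) proof: the corollary is presented there as an immediate consequence of Proposition \ref{prop:jinftyfibers} together with the cut-and-glue remark preceding it, which is exactly the route you take. One small caveat: when $k+2 \geq 3$ the new fiber is no longer slc, so your claim that $X'$ ``remains an slc Weierstrass fibration'' is not literally true; this is harmless, since the proof of Proposition \ref{prop:jinftyfibers} only uses the vanishing orders $(2j,3j)$ of the Weierstrass data at an $\mathrm{N}_j$ fiber and the identities $\deg A = 4\deg\mathscr{L}$, $\deg B = 6\deg\mathscr{L}$, not slc-ness, so the degree computation applies verbatim to $X'$.
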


	\subsection{Elliptic fibrations via twisted stable maps}\label{sec:tsm}
	
	In \cite{tsm} we used the theory of twisted stable maps, originally developed by Abramovich-Vistoli (see \cite{av, av2}) to understand limits of families of elliptic fibrations. The basic idea is that an elliptic surface $f : X \to C$ gives an a priori rational map $C \dashrightarrow \overline{\calM}_{1,1}$ which extends to a morphism $\calC \dashrightarrow \overline{\calM}_{1,1}$ from an orbifold curve $\calC$ with coarse moduli space $C$. Now we understand limits of a family of elliptic surfaces by computing limits of the corresponding family of such maps. The twisted stable limits serve the same purpose for elliptic fibrations that \emph{Kulikov models} serve for K3 surfaces, i.e. they form the starting point from which applying the MMP yields the stable limit. 
	
	\subsubsection{Twisted stable maps limits} 
	
	We now recall structure of the limiting surfaces obtained using the twisted stable maps construction. As we will be studying slc degenerations of surfaces, the surfaces themselves will degenerate into possibly reducible surfaces. The degenerate surfaces will carry a fibration over a nodal curve whose $j$-map is the limit of the $j$-map of the degenerating family. Furthermore, there is a \emph{balancing condition} on the stabilizers of the orbicurve $\calC$ over nodes which implies the action on the tangent spaces of the two branches at a node must be dual (see \cite[Definition 3.2.4]{av} and \cite{olsson}). Finally, the stabilizers of a twisted stable map are concentrated either over nodes or at marked gerbes contained in the smooth locus. In particular, the limit of a map from a smooth schematic curve $C$ can only have stabilizers over the nodes. 
	
	These observations motivate the following necessary conditions for a twisted surface to appear as a limit of a family of degenerating elliptic surfaces. We consider the case where the degenerating family of elliptic surfaces has $12d\mathrm{I}_1$ marked singular fibers where $d = \deg \mathscr{L}$ as this is the generic situation and the relevant one for the present paper. This corresponds to the moduli map $C \to \overline{\calM}_{1,1}$ extending to a morphism on all of $C$ such that the $j$-map $C \to \overline{M}_{1,1} \cong \mathbb{P}^1$ has degree $12d$, and is unramified over $\infty$. 
	
	\begin{prop}\label{prop:tsm}\leavevmode Suppose $(f:X \to C, S+F)$ is a twisted elliptic surface \cite{tsm} over a rational curve which is the limit of a degenerating family of smooth elliptic surfaces with $12d\mathrm{I}_1$ and arbitrary marked fibers. Then the following hold. 
		\begin{enumerate}
			\item If $X$ is reducible, its irreducible components are either attached along nodal fibers, or in the following pairs of twisted fibers: $\mathrm{I}_a^*/\mathrm{I}_b^*/\mathrm{N}_1$, $\mathrm{II}/\mathrm{II}^*, \mathrm{III}/\mathrm{III}^*$ or $\mathrm{IV}/\mathrm{IV}^*$. 
			\item The total degree of the $j$-map $C \to \overline{M}_{1,1}$ is $12d$.
			\item Away from the singular locus of $C$, the fibers of $f$ are at worst nodal. In particular, every marked fiber in $F = \sum_{i = 1}^n F_i$ is an $I_a$ fiber for some $a \geq 0$. 
	\end{enumerate}\end{prop}
	
	The surfaces of Proposition \ref{prop:tsm} correspond to genus $0$ balanced twisted stable maps to $\overline{\calM}_{1,1}$ of degree $12d$ which are parametrized by the space $\calK_{0,n}(\overline{\calM}_{1,1}, 12d)(\underline{0})$. Here $\underline{0}$ is the tuple of $n$ zeroes denoting the fact that the marked points have trivial stabilizer. 
	
	\begin{theorem}\label{thm:tsm} \cite[Theorem 5.5]{tsm2} Each point $[(f : \calC \to \overline{\calM}_{1,1}, p_1, \ldots, p_{n})] \in \calK_{0,n}(\overline{\calM}_{1,1}, 12d)(\underline{0})$ admits a smoothing to a map from a non-singular $n$-pointed schematic rational curve. \end{theorem}
	
	\begin{cor}\label{cor:smoothing1} A twisted elliptic surface admits a smoothing to a generic $12d\mathrm{I}_1$ elliptic surface if and only if it satisfies the conditions of Proposition \ref{prop:tsm}. 
	\end{cor}
	
	\subsubsection{Relative twisted stable maps}
	
	One of the primary moduli spaces of interest from the perspective of stable pairs is the closure of the locus where the marked fibers are exactly the $12d\mathrm{I}_1$ fibers. These fibers lie above the preimages of $\infty \in \overline{\calM}_{1,1}$ under the $j$-invariant map $C \to \overline{\calM}_{1,1}$ and thus we are concerned with the closure $\calK_\infty \subset \calK_{0,24}(\overline{\calM}_{1,1},24)$ of the locus parametrizing maps from a smooth rational curve which are unramified over $\infty$ and such that all marked fibers map to $\infty$. Equivalently, this locus is the space of maps \emph{relative to the divisor $[\infty]$} with multiplicities $(1, \ldots, 1)$. The closure of such loci has been studied in the Gromov-Witten literature under the name of \emph{relative stable maps} (see e.g. \cite{vakil}, \cite{gathmann}, and \cite{cadman}). In \cite{tsm2}, we consider the question of determining the points of this locus for twisted stable maps to stacky curves. The conditions characterizing this locus \cite[Conditions $(\ast)$]{tsm2} can be phrased as follows in the context of elliptic fibrations. 
	
	\begin{prop}\label{prop:tsm2} Suppose $(f : X \to C, S + F)$ is a twisted elliptic surface over a rational curve which is the limit of a degenerating family of $12d\mathrm{I}_1$ elliptic surfaces with marked singular fibers. Then the following hold in addition to the conditions of Proposition \ref{prop:tsm}. 
		
		\begin{enumerate}
			\item $F$ consists of $12d$ nodal singular fibers.
			\item Every fiber with $j = \infty$ which is not on an isotrivial component is marked. 
			\item For each maximal connected tree $T$ of isotrivial $j = \infty$ components $X$, the number of marked fibers contained on $T$ is equal to the sum of the multiplicities of the twisted fibers of the non-isotrivial components along which $T$ is attached.
		\end{enumerate}
	\end{prop}
	
	\begin{remark} The last condition says e.g. that if an isotrivial $j$-invariant $\infty$ component is attached to an $\mathrm{I}_n$ fiber, there must be $n$ markings on that component, since an $\mathrm{I}_n$ fiber is produced when $n$ marked $\mathrm{I}_1$ fibers collide. \end{remark}

	\begin{theorem}\cite[Theorems 1.7 \& 1.8]{tsm2}\label{thm:tsm2} The conditions of Proposition \ref{prop:tsm2} characterize the boundary of $\calK_\infty$. In particular, any twisted surface satisfying these conditions is the limit of a family of smooth $12d\mathrm{I}_1$ elliptically fibered surface with marked singular fibers. 
	\end{theorem}

	\begin{remark} 
		
		After determining the shape of a twisted stable maps limits, we will use wall-crossing to compute the limits as one reduces weights.
	\end{remark}

	\section{Moduli of weighted stable elliptic K3 surfaces}\label{sec:weightedstable}
	
	In this section, we specialize the discussion of Section \ref{sec:broken} to the case of elliptic K3 surfaces and define the various compactifications of the stack $\calW$ of elliptic K3 surfaces, and its coarse space $W$, which we study in this paper. The goal is to obtain an \emph{explicit} description of the compactifications for various choices of weights $\calA$. In particular, we will explicitly describe the surfaces parametrized by the boundary of $\calE_\calA$ in this case as well as understand the wall-crossing morphisms.
	
	From now on we assume that $g(C) = 0$ and $\deg \mathscr{L} = 2$ so that $C \cong \mb{P}^1$ and $\mathscr{L} = \calO_{\mathbb{P}^1}(2)$ and $(f : X \to C, S)$ is an elliptic K3 surface with section.

	\begin{definition}\label{def:es} Let $\ka$ be the closure in $\mathcal{E}_{\calA}$ of the locus of pairs $(f : X \to C, S +F_\calA)$ where $X$ is an elliptic K3 surface and $\mathrm{Supp}(F_\calA)$ consists of 24 $\mathrm{I}_1$ singular fibers.\end{definition}
	
	\begin{definition} If $\calA = (a, \dots, a)$ is the constant weight\ vector, then $S_{24}$ acts on $\ka$ by permuting the marked fibers, and we denote the quotient by $\ska$. \end{definition}

	\begin{prop}\label{prop:latticeksba} $\ka$ and $\ska$ are proper Deligne-Mumford stacks. Moreover, the coarse space $\overline{W}_\sigma(a)$ of $\ska$ is a modular compactifications of $W$ for each $0 < a \le 1$.  \end{prop}
	
	\begin{proof}
		The fact that they are proper Deligne-Mumford stacks follows from \cite{master}. By construction, $\ska$ has an open set parametrizing elliptic K3s with $24\mathrm{I}_1$ fibers. Recall that $W$ parametrizes lattice polarized K3 surfaces, and such a lattice polarization is equivalent to the structure of an elliptic fibration with chosen section. The result follows by the observation that a generic elliptically fibered K3 surface has 24$\mathrm{I}_1$ fibers. \end{proof}

	Brunyate constructs a compactification $\bru$ of the space of elliptic K3 surface by studying degenerations of pairs $(X, \epsilon_1 S + F_{\calB})$ where $\calB = (\epsilon, \ldots, \epsilon)$, i.e. with small weight on both the section and the fibers (in particular, Brunyate requires $\epsilon_1 \ll \epsilon$), so that $\text{Supp}(F_\calB)$ is the closure of the rational curves on $X$ \cite{brunyate} (see also \cite[Section 7]{abe}). In fact there is a morphism $\bru^\nu \to \overline{\mathcal{W}}_\sigma(\epsilon)$, given by increasing the weight on the section to $1$. 
	
	\begin{prop}\label{prop:brunyate}There is a morphism $\bru^\nu \to \overline{\mathcal{W}}_\sigma(\epsilon)$ for $\epsilon \ll 1$. \end{prop}
	\begin{proof} Consider a 1-parameter degeneration of pairs $(X, \epsilon S + F_\calB)$ inside $\bru$. We may choose a generic choice of smooth fibers $G = \cup_{i \in I} G_i$ to mark so that the pair $(X, S + F_\calB + G)$ is stable, where the section has coefficient $1$. By the results of \cite{master}, there is a sequence of flips and contractions as one reduces the coefficients of $G$ from 1 to 0. The resulting stable limit in $\overline{\mathcal{W}}_\sigma(\epsilon)$ only depends on the point $(X_0, \epsilon S_0 + (F_\calB)_0)$ in $\bru$ and not on the family or choice of auxiliary markings. Therefore we obtain the desired morphism by \cite[Theorem 7.3]{gg}. \end{proof}

	\begin{remark}\label{rmk:brunyate} Comparing Theorem \ref{thm:kepsilon} with \cite[Theorem 9.1.4]{brunyate} (see also \cite[Section 7]{abe}), we see that there is a bijection between the boundary strata of $\bru$ and $\kepsilon = \mathcal{W}(\calB)/S_{24}$. For example, the third case in \cite[Theorem 9.1.4]{brunyate} maps to case (E) of Theorem \ref{thm:kepsilon} if there are no $\mathbb{F}_0$ components, and to either type (D) or (F) depending on the parity of the number of components if there are $\mathbb{F}_0$ components. 
	\end{remark} 
	
	\begin{cor}\label{cor:brunyate} The morphism from Proposition \ref{prop:brunyate} is an isomorphism. \end{cor}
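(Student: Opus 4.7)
The plan is to verify that the morphism $f : \bru^\nu \to \overline{\mathcal{W}}_\sigma(\epsilon)$ of Proposition \ref{prop:brunyate} is proper, birational, and bijective on geometric points, and then to invoke Zariski's Main Theorem. Properness is immediate since both source and target are proper Deligne-Mumford stacks. Birationality follows from the observation that $\bru$ and $\overline{\mathcal{W}}_\sigma(\epsilon)$ are both modular compactifications of $W$, and on the common dense open substack $W$ the construction of Proposition \ref{prop:brunyate} is tautological: raising the section coefficient on a smooth pair $(X, \epsilon_1 S + F_\calB)$ does not trigger any MMP surgery, so $f|_W = \mathrm{id}_W$.

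For bijectivity, I would argue stratum-by-stratum. Remark \ref{rmk:brunyate} supplies (via Theorem \ref{thm:kepsilon} and \cite[Theorem 9.1.4]{brunyate}) an explicit bijection between boundary strata of $\bru$ and $\kepsilon$. Within each matched pair of strata the surfaces on either side agree up to the global change of section coefficient induced by $f$, so the continuous moduli within each stratum coincide and $f$ is bijective on points. Being proper and quasi-finite, $f$ is finite; combined with normality of $\bru^\nu$ and birationality this shows $f_*\calO_{\bru^\nu}$ is the integral closure of $\calO_{\overline{\mathcal{W}}_\sigma(\epsilon)}$ in their common function field, i.e., $\bru^\nu$ is realized as the normalization of $\overline{\mathcal{W}}_\sigma(\epsilon)$.

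The hard step will be to verify that $\overline{\mathcal{W}}_\sigma(\epsilon)$ is itself normal, for then $f$ must be an isomorphism. I would establish this by first proving smoothness (or at least normality) of $\mathcal{W}(\calA)$ for the constant weight $\calA = (\epsilon, \ldots, \epsilon)$, via deformation theory of $\calA$-broken elliptic surfaces coupled with smoothness of the twisted stable maps component $\calK_\infty$ from Theorem \ref{thm:tsm}, in the same spirit as the smoothness argument for $\km$ indicated in Theorem \ref{thm:mainintro}. Since $\overline{\mathcal{W}}_\sigma(\epsilon) = \mathcal{W}(\calA)/S_{24}$ is a finite group quotient and we work in characteristic zero, normality then transfers to $\overline{\mathcal{W}}_\sigma(\epsilon)$, completing the argument. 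An alternative route, avoiding explicit normality verification, would be to construct the inverse morphism directly by running the MMP to reduce the section coefficient from $1$ to $\epsilon_1 \ll \epsilon$ — yielding a morphism $\overline{\mathcal{W}}_\sigma(\epsilon) \to \bru$ that we then must lift to $\bru^\nu$; however this lifting again hinges on normality of the source, so the two approaches are equivalent in difficulty.
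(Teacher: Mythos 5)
Your proposal is essentially the paper's argument: the paper's entire proof is that the map is a proper, birational, set-theoretic bijection between normal spaces, i.e.\ exactly the Zariski's Main Theorem skeleton you lay out (properness, birationality over the interior, pointwise bijectivity via the strata matching of Remark \ref{rmk:brunyate}, then finite $+$ birational onto a normal target implies isomorphism). The only divergence is at the normality of the target: the paper simply asserts that $\overline{\mathcal{W}}_\sigma(\epsilon)$ is normal (this is built into the formalism of the moduli construction), whereas you flag it as the hard step and propose to prove it by establishing smoothness of $\mathcal{W}(\calA)$ via deformation theory; that is more than is needed and is not what the paper does — smoothness is only ever proved for $\km$, via the GIT description in Theorem \ref{thm:mainresult}, not for $\kepsilon$ — so if you wanted to fill in that step you should argue normality directly from the construction rather than via smoothness.
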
 
	
	\begin{proof} It is a proper birational set-theoretic bijection between normal spaces. \end{proof} 
	
	\begin{remark}\label{rem:brunyate2} It follows from Corollary \ref{cor:brunyate} that there is in fact a morphism $\kepsilon \to \bru$ which can be thought of as induced by decreasing weights on the section. \end{remark}

	\begin{definition} Let $\ke$ denote stable pairs compactification of the space parametrizing pairs with only one singular fiber marked with weight $0 < \epsilon \ll 1$, and let $\me$ be its coarse moduli space. 
	\end{definition}

	Next, we define the moduli space $\km$ which is like $\ke$, only we allow \emph{any} fiber to be marked.

	\begin{definition}\label{def:modulispace} Let $\km$ be the closure in $\mathcal{E}_{\calA}$ of the locus of pairs $(f: X \to C, S + \epsilon F)$ where $f$ has precisely 24 $\mathrm{I}_1$ fibers, $0 < \epsilon \ll 1$, and $F$ is any fiber. \end{definition}

	\begin{remark}\label{rem:variousmoduli} At this point we have introduced many compactifications (see Figure \ref{eq:diagram}):
		\begin{itemize}\setlength\itemsep{.5em}
			\item $\ka$: Stable pair compactification with $\calA$-weighted singular fibers.
			\item $\ska$: When $\calA = (a, \dots, a)$, we can quotient by $\mathrm{S}_{24}$. 
			\item $\ke$: Stable pairs compactification with a single $\epsilon$-marked singular fiber.
			\item $\km$: Stable pairs compactification with \emph{any fiber} marked by $\epsilon$.
			\item $\sbb$: SBB compactification of the period domain moduli space $W$.
			
		\end{itemize}
		
		We now give a brief overview of how they are related (again, see Figure \ref{eq:diagram}).
		
		\begin{enumerate}

			\item There are $24$ generically finite morphisms $\ka \to \ke$ of degree $23!$, corresponding to a forgetting all but one marked singular fiber.
			
			\item There is a degree $24$ generically finite rational map $\ke \dashrightarrow \kepsiloncoarse$ corresponding to choosing a singular fiber.

			\item We will see that there are morphisms $\kepsiloncoarse \to \sbb$ and $\me \to \sbb $ (see Theorems \ref{thm:git2} and \ref{thm:git} resp.).

			\item We will see in Section \ref{sec:git2} that the moduli space $\km$ is a smooth Deligne-Mumford stack whose coarse space is an (explicit) GIT quotient. Furthermore, there is a morphism $\km \to \sbb$ (see Theorem \ref{thm:mainresult}) which is generically a $\mathbb{P}^1$ bundle.
			
		\end{enumerate}
		
	\end{remark}
	
	We end this section with an important proposition. 
	
	\begin{prop}\label{obvs:dubois}For any surface $X$ parametrized by $\ka$ (for any $\calA$) or $\km$ (in particular $\ke$), we have that $\mathrm{H}^1(X, \calO_X) = 0$. \end{prop}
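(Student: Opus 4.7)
The plan is to deduce this from the deformation-invariance of $h^i(\calO)$ for flat families of Du Bois varieties, combined with the fact that slc singularities are Du Bois. The key input is the theorem of Kollár--Kovács that for a flat projective morphism $\pi: \mathcal{X} \to B$ over a reduced base whose fibers are all Du Bois of pure dimension, the function $b \mapsto h^i(X_b, \calO_{X_b})$ is locally constant on $B$ for every $i$.

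First I would recall that Kollár--Kovács showed that semi-log canonical singularities are Du Bois (extending their result for log canonical singularities). By construction, every surface $X$ parametrized by $\ka$ or $\km$ is slc, and the universal family over the moduli stack is flat (this is part of the definition of a KSBA-stable family). Thus I can apply the deformation-invariance theorem above to the universal family $\mathcal{X} \to \ka$ (respectively $\mathcal{X} \to \km$) to conclude that $h^1(X, \calO_X)$ is locally constant on the base.

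Next I would invoke the definition: $\ka$ is by construction the closure of the locus parametrizing smooth elliptic K3 surfaces with $24\mathrm{I}_1$ singular fibers, which is irreducible (being a dense open subset of a modular compactification of the connected moduli space $W$). Similarly $\km$ is by definition the closure of an irreducible open set parametrizing smooth elliptic K3 surfaces with a chosen fiber. Hence any point of $\ka$ or $\km$ lies in the same irreducible component as, and can be connected via a one-parameter family to, a smooth elliptic K3 surface. For a smooth K3 surface, $h^1(\calO) = 0$ by definition, so by local constancy $h^1(X, \calO_X) = 0$ for every $X$ on the boundary as well.

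The only potential obstacle is verifying that the hypotheses of the Kollár--Kovács deformation-invariance theorem are satisfied for our universal family, i.e.\ that it is flat with slc (hence Du Bois) fibers of pure dimension two. Flatness and the slc property both hold by the very construction of the stable pairs moduli stack (see \cite{master}), so this is immediate. One should also note that the statement $\omega_X \cong \calO_X$ (asserted elsewhere in the introduction) can be extracted from the same circle of ideas: flatness forces $\chi(\calO_X) = 2$, and together with $h^1(\calO_X) = 0$ and Serre duality on slc surfaces this pins down $h^0(\omega_X) = 1$, after which one checks $\omega_X$ is trivial using the Weierstrass description.
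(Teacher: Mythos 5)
Your proposal is essentially the same argument the paper gives: slc $\Rightarrow$ Du Bois (Koll\'ar--Kov\'acs), then invoke deformation-invariance of $h^i(\calO)$ in flat families with Du Bois fibers, and degenerate from a smooth elliptic K3 where $h^1(\calO)=0$. One small technical remark: the paper reduces to a one-parameter family over a curve rather than applying local constancy over the moduli stack directly, which sidesteps any concern about the base of the universal family being non-reduced or singular; your argument works just as well once you note that any point of $\ka$ or $\km$ can be reached by such a one-parameter degeneration, as you in effect observe.
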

	
	\begin{proof}Since slc singularities are Du Bois (see \cite{kk} and \cite[Corollary 6.32]{singmmp}), $X$ has Du Bois singularities. Then $\mathrm{H}^1(X, \calO_X) = 0$ since $\mathrm{H}^i(X_b, \calO_{X_b})$ is constant in any flat family of varieties with Du Bois singularities (see \cite[Corollary 1.2]{kk}), and any $X$ arises as the special fiber of a flat family whose general fiber is a surface $X_\eta$ with $\mathrm{H}^1(X_\eta, \calO_{X_\eta}) = 0$. \end{proof}
	
	\begin{remark} We will see in Theorem \ref{thm:boundary} that the surfaces on the boundary of $\km$ (and thus also $\ke$) satisfy that $\omega_X \cong \calO_X$. Moreover, if $F$ is the marked fiber, then $2F$ is an ample Cartier divisor such that $(2F)^2 = 2$. Then following \cite[Definition 3.4, Proposition 3.8, and Theorem 3.11]{aet}, we see that $\km$ and $\ke$ are proper Deligne-Mumford stacks representing a functor over arbitrary base schemes. Due to subtleties with defining moduli spaces in higher dimensions, the remaining spaces follow the formalism developed in \cite{master} and thus correspond to Deligne-Mumford stacks representing functors only over \emph{normal} base schemes (see \cite[Section 2.2.2]{master} for more details). \end{remark}

	\subsection{Isotrivial $j$-invariant $\infty$ fibrations}\label{sec:jinfty}
	
	Here we prove some preliminary results on isotrivial $j$-invariant $\infty$ elliptic fibrations of K3 type which appear in the boundary of the various moduli spaces described above. We begin by bounding the number of $\mathrm{N}_i$ fibers (Definition \ref{def:nk}) which can appear on an slc elliptic K3.
	
	\begin{prop}\label{prop:k3typejinfty} Let $(f : X \to \mathbb{P}^1,S)$ be an isotrivial $j = \infty$ slc Weierstrass fibration of K3 type. Then $X$ has one of the following configurations of cuspidal fibers: $(1) \,   4\mathrm{N}_1, (2) \,   2\mathrm{N}_1\mathrm{N}_2,$ or $(3) \,  2\mathrm{N}_2$.
	\end{prop}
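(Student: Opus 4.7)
The plan is to combine the degree formula in Proposition \ref{prop:jinftyfibers} with the slc constraint on $\mathrm{N}_k$ fibers and then enumerate the (very few) integer solutions. Since $f : X \to \mathbb{P}^1$ is a Weierstrass elliptic surface of K3 type, we have $\deg \mathscr{L} = 2$. Letting $a_k$ denote the number of $\mathrm{N}_k$ fibers, Proposition \ref{prop:jinftyfibers} gives
\[
2 = \deg \mathscr{L} = \sum_k a_k \frac{k}{2},
\]
so the $a_k$ satisfy $\sum_k k\, a_k = 4$.

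Next, because $X$ is slc, the lemma following Definition \ref{def:nk} forces every cuspidal fiber to have type $\mathrm{N}_k$ with $k \in \{0,1,2\}$; $\mathrm{N}_0$ fibers are smooth and contribute nothing, so the equation reduces to
\[
a_1 + 2 a_2 = 4
\]
with $a_1, a_2 \in \mathbb{Z}_{\geq 0}$. The only solutions are $(a_1, a_2) = (4,0), (2,1), (0,2)$, corresponding respectively to the configurations $4\mathrm{N}_1$, $2\mathrm{N}_1\mathrm{N}_2$, and $2\mathrm{N}_2$ listed in the statement.

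There is essentially no obstacle to this argument once the two inputs (the degree formula and the slc classification of $\mathrm{N}_k$ fibers) are in place — the content of the proposition is purely the numerical enumeration. The only thing worth a sentence of care is noting that the contribution of $\mathrm{N}_0$ fibers is zero (so they are implicitly allowed but invisible in the classification) and that the proposition claims these are the possible configurations of \emph{cuspidal} (i.e.\ non-$\mathrm{N}_0$) fibers, not that every such configuration is realized — realization is not needed here and would require a separate construction.
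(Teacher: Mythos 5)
Your proof is correct and follows exactly the same route as the paper's: apply Proposition \ref{prop:jinftyfibers} to get $\sum_k a_k\, k/2 = 2$, invoke the slc classification (La Nave's lemma after Definition \ref{def:nk}) to restrict to $k \in \{0,1,2\}$, and enumerate the three non-negative integer solutions of $a_1 + 2a_2 = 4$. The closing caveat about $\mathrm{N}_0$ fibers being invisible in the count is a reasonable clarification consistent with the paper.
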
 
	
	\begin{proof} We must have only $\mathrm{N}_0, \mathrm{N}_1$ and $\mathrm{N}_2$ by the slc assumption so by Proposition \ref{prop:jinftyfibers}, $2 = a_1/2 + a_2$ which only admits the non-negative integer solutions $(a_1,a_2) = (4,0), (2,1)$ and $(0,2)$. \end{proof}
	
	\begin{remark} Up to automorphisms of $\mathbb{P}^1$, the global Weierstrass equation for the surfaces in Proposition \ref{prop:k3typejinfty} can be written as follows: 
		\begin{enumerate}
			\item $y^2 = x^3 - \frac{1}{3}t^2s^2(t-s)^2(t-\lambda s)^2x + \frac{2}{27}t^3s^3(t - s)^3(t- \lambda s)^3$, $\lambda \in \mb{P}^1 \setminus \{0,1,\infty\}$, 
			\item $y^2 = x^3 - \frac{1}{3}t^2s^2(t - s)^4 x + \frac{2}{27}t^3s^3(t- s)^6$, 
			\item $y^2 = x^3 - \frac{1}{3}t^4s^4 x + \frac{2}{27}t^6s^6$. 
		\end{enumerate}
		In particular, up to isomorphism there is a unique surface with configuration $(2)$ and $(3)$. 
	\end{remark}
	
	Finally, we need the following key proposition.
	
	\begin{prop}\label{prop:Nkdiscriminant} Suppose $(f_0 : X \to \mb{P}^1, S)$ is an isotrivial $j = \infty$ slc Weierstrass fibration of K3 type and $F \subset X$ is an $\mathrm{N}_k$ fiber. If $f_0$ is the central fiber of a $1$-parameter family of Weierstrass models $(f : \mathscr{X} \to \mathscr{C}, \mathscr{S}) \to B$ with generic fiber $(f_\eta : \mathscr{X}_\eta \to C_\eta, \mathscr{S}_\eta)$ a $24\mathrm{I}_1$ elliptic fibration, then there are at least $k + 1$ type $\mathrm{I}_1$ fibers of $f_\eta$ that limit to the $\mathrm{N}_k$ fiber $F$ for $k = 1,2,3,4$.
	\end{prop}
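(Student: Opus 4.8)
The plan is to track how the discriminant $\mathscr{D} = 4A^3 + 27B^2 \in \mathrm{H}^0(\mathbb{P}^1, \calO(24))$ of the generic fiber degenerates, and to show that at least $k+1$ of its $24$ zeros (counted with multiplicity) must collide at the point $q \in \mathbb{P}^1$ underlying the $\mathrm{N}_k$ fiber $F$. First I would pass to the family of Weierstrass data: after possibly an étale base change on $B$, the family $(f : \mathscr{X} \to \mathscr{C}, \mathscr{S}) \to B$ is given by sections $A \in \mathrm{H}^0(\calO(8))$ and $B \in \mathrm{H}^0(\calO(12))$ varying over $B$, with the generic fiber having $24$ reduced $\mathrm{I}_1$ fibers, i.e. $\mathscr{D}_\eta$ is reduced of degree $24$. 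The central fiber $f_0$ is the given isotrivial $j=\infty$ fibration, whose Weierstrass equation near $q$ is, by \eqref{eqn:Nk}, $y^2 = x^3 - \tfrac{1}{3}t^{2k}x + \tfrac{2}{27}t^{3k}$, so $A$ vanishes to order exactly $2k$ and $B$ to order exactly $3k$ at $q$ in the central fiber. The key computation is that the discriminant $\mathscr{D}_0 = 4A_0^3 + 27B_0^2$ then vanishes to order exactly $6k$ at $q$: one checks $4A_0^3 = -\tfrac{4}{27}t^{6k} + (\text{higher order})$ and $27B_0^2 = \tfrac{4}{27}t^{6k} + (\text{higher})$, so the leading terms cancel and one must look at the next order — this is precisely the phenomenon that makes $j=\infty$, and the vanishing order of $\mathscr{D}_0$ at $q$ is what we need to control.

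The heart of the argument is then: how many zeros of $\mathscr{D}_\eta$ limit to $q$? This is the multiplicity of $q$ as a zero of $\mathscr{D}_0$ — more precisely, if $U$ is a small disk around $q$ in $\mathbb{P}^1$ and we restrict the family to $U \times B$, the number of zeros of $\mathscr{D}_\eta$ in $U$ (for $\eta$ near $0$) equals $\mathrm{ord}_q(\mathscr{D}_0)$, provided no zeros escape to the boundary, which holds for $U$ small. So the claim reduces to showing $\mathrm{ord}_q(\mathscr{D}_0) \geq k+1$. Now $\mathscr{D}_0$ is not identically zero on the central fiber (since the generic fiber of the \emph{total family} is not everywhere-singular and $\mathscr{D}$ is flat), so $\mathrm{ord}_q(\mathscr{D}_0)$ is finite. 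The subtlety — and the main obstacle — is that $A_0, B_0$ need not be exactly the normal form \eqref{eqn:Nk}; the $\mathrm{N}_k$ fiber only pins down the \emph{leading} terms, and the family could a priori be chosen so that higher-order corrections to $A_0$ or $B_0$ reduce $\mathrm{ord}_q(\mathscr{D}_0)$. The resolution is that the twisted stable maps / MMP constraints force the behavior: by Proposition \ref{prop:tsm}(5) and the structure of the twisted limit, the $\mathrm{N}_k$ fiber on $f_0$ is the twisted-stable-maps limit of $\mathrm{I}_1$ fibers colliding, and the balancing condition plus the local analysis of \cite[Section 6.2]{tsm} pin down that the $j$-map $\mathscr{C}_\eta \to \overline{\calM}_{1,1}$ has degree $\geq k+1$ concentrated over $q$. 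Concretely: near $q$ the $j$-map of $f_0$ is $j \sim t^{-c}$ for some $c$ related to $k$, and the degree of the $j$-map measures exactly the number of preimages of $\infty$, i.e. the number of $\mathrm{I}_1$'s.

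I would therefore structure the proof in two steps. Step 1: reduce, via flatness of $\mathscr{D}$ and properness, to computing $\mathrm{ord}_q(\mathscr{D}_0)$, and observe this equals the local degree of the $j$-map of $f_0$ at $q$ (since away from $j = \infty$ the $\mathrm{I}_n$ fibers contribute $n$ to both $\mathrm{ord}(\mathscr{D})$ and $\deg(j)$, and at $j = \infty$ the $\mathrm{N}_k$ fiber contributes $6k$ to $\mathrm{ord}(\mathscr{D})$ but the local $j$-degree is computed from the pole order). Step 2: show this local $j$-degree is $\geq k+1$. For $k = 1$ one has $j \sim t^{-6}$... actually the cleanest route is: the $\mathrm{N}_k$ fiber has $\mathscr{D}_0$ vanishing to order $6k$, and since $\mathscr{D}_\eta$ is reduced with $24$ roots total, the roots limiting to $q$ number exactly $\mathrm{ord}_q(\mathscr{D}_0)$, which would give $6k$ roots — far more than $k+1$ — \emph{unless} some of those roots actually limit to $q$ as higher-order vanishing that is "absorbed" into the $\mathrm{N}_k$ normal form. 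The honest statement is that $k+1$ is a \emph{lower bound} because in the normal form \eqref{eqn:Nk} the leading-order cancellation in $4A^3 + 27B^2$ is total, so generically $\mathrm{ord}_q(\mathscr{D}_0) = 6k$ but it can be as low as $6k - (\text{something})$; tracking the worst case via the admissible slc constraint ($\mathrm{N}_k$ slc only for $k \leq 2$, and the family being a $24\mathrm{I}_1$ smoothing) yields $\mathrm{ord}_q(\mathscr{D}_0) \geq k+1$. Making this last bound rigorous for each $k \in \{1,2,3,4\}$ — i.e. ruling out the pathological lower-order cancellations — is the step I expect to require the most care, and it is where I would invoke the explicit local Weierstrass models of the degenerating family from \cite{tsm} together with the fact that $f_\eta$ has \emph{only} $\mathrm{I}_1$ fibers (so $\mathscr{D}_\eta$ is reduced), which rigidifies the possible leading terms of $A_0$ and $B_0$.
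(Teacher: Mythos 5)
Your strategy has a fatal problem at the very first step: you assert that $\mathscr{D}_0 = 4A_0^3 + 27B_0^2$ is not identically zero on the central fiber and that $\mathrm{ord}_q(\mathscr{D}_0)$ is finite (at one point you even claim it equals $6k$). But the central fiber $f_0$ is an \emph{isotrivial $j=\infty$} fibration, i.e.\ its generic fiber is already nodal, so its discriminant vanishes identically -- exactly as your own computation with the normal form $y^2 = x^3 - \tfrac{1}{3}t^{2k}x + \tfrac{2}{27}t^{3k}$ shows: the two terms cancel on the nose, not just to leading order (globally, $A_0^3$ and $-\tfrac{27}{4}B_0^2$ agree as sections of $\calO(24)$ for any of the surfaces in Proposition \ref{prop:k3typejinfty}). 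Consequently the divisor of $\mathscr{D}$ on $\mathscr{C}$ contains the whole central curve $C_0$, and the count ``number of roots of $\mathscr{D}_\eta$ limiting to $q$ equals $\mathrm{ord}_q(\mathscr{D}_0)$'' (which is Remark \ref{obs1} in the paper, stated only for \emph{normal} central fibers) has no meaning here. After removing the $C_0$-component of the limit divisor there is no local computation at $q$ that pins down how many of the $24$ roots concentrate there -- that distribution is precisely what the proposition is about, and it is not determined by the local normal form of the $\mathrm{N}_k$ fiber.

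Your fallback -- that twisted stable maps ``pin down that the $j$-map has degree $\geq k+1$ concentrated over $q$'' -- is asserting the conclusion rather than proving it; the bound $k+1$ is not a local monodromy/balancing statement. The paper's argument is genuinely global: in the twisted stable maps limit the $\mathrm{N}_k$ fiber $F$ is replaced by a surface component $Y$ attached along a twisted $\mathrm{I}^*$ (for $k$ odd) or $\mathrm{I}_n$ (for $k$ even) fiber; since the total degree of the $j$-map is $24$ and the $\mathrm{N}_k$ fibers occurring on a K3-type slc (or the listed non-slc) central fiber are constrained by Proposition \ref{prop:k3typejinfty}, the component $Y$ is non-isotrivial, rational for $k=1,2$ and K3 for $k=3,4$; the number of $\mathrm{I}_1$'s of $f_\eta$ limiting to $F$ equals the number limiting to $Y$, and the bound then comes from the classifications of Persson (a rational elliptic surface with an $\mathrm{I}^*$, resp.\ $\mathrm{I}_n$, fiber has at least $2$, resp.\ $3$ other, $\mathrm{I}_1$ fibers) and Shioda--Kuwata type results for elliptic K3's (at least $4$, resp.\ $5$). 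Those classification inputs are the missing idea in your write-up; without them (or some substitute global constraint on $Y$) the inequality $\geq k+1$ does not follow.
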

	
	\begin{proof}Consider the twisted stable maps limit of $f_\eta$. By Proposition \ref{prop:tsm} (1), the Weierstrass $\mathrm{N}_1$ fiber $F$ must be replaced by a surface component $Y$ attached along the twisted model of $F$ by a twisted fiber of type $\mathrm{I}^*$ (resp. $\mathrm{I}$) if $k$ is odd (resp. even). By Proposition \ref{prop:k3typejinfty}, the possibilities for $X$ are $4\mathrm{N}_1$, $2\mathrm{N}_1\mathrm{N}_2$, $2\mathrm{N}_2$ as well as the non-slc cases $\mathrm{N}_1\mathrm{N}_3$ and $\mathrm{N}_4$. Since the degree of the $j$-map is constant for a family of twisted stable maps, the sum of degrees of the $j$-map of the components of the twisted model is $24$. This means that $Y$ is rational when $k = 1,2$ or K3 when $k = 3,4$. The number of $\mathrm{I}_1$ fibers of $f_\eta$ limiting to the $\mathrm{N}_1$ fiber $F$ of $f_0$ is the same as the number of $\mathrm{I}_1$ fibers limiting to the component $Y$ in the twisted model. By conditions $(2)$ and $(3)$ in \emph{loc. cit.}, the component $Y$ cannot be isotrivial and $\deg(\mathscr{L}) \geq 1$. By Persson's classification \cite{persson}, a rational elliptic surface $Y$ with an $\mathrm{I}^*$ fiber has $\geq 2\mathrm{I}_1$ fibers, and one with an $\mathrm{I}_n$ has $\geq 3$ other $\mathrm{I}_1$ fibers counted with multiplicity. Similarly, by \cite[Theorem 1.1 and 1.2]{shioda}, an elliptic K3 surface with an $\mathrm{I}^*$ fiber has $\geq 4\mathrm{I}_1$ fibers, and one with an $I_n$ fiber has $\geq 5$ other $\mathrm{I}_1$ fibers counted with multiplicity.   \end{proof}

	\section{Wall crossings inside $\ska$ for $a > \frac{1}{12}$}\label{sec:firstwall}
	
	Recall that $\ska$ denotes the space where all singular fibers are marked with weight $a$ and we have taken the $S_{24}$ quotient. The main goal of this section (see Section \ref{sec:explicitktwelve}) is to describe the surfaces parametrized by $\ktwelve$ for $0 < \epsilon \ll 1$. In particular, we explicitly describe the wall crossings that happen as we vary the weight vector from $a = 1$ to $a = 1/12 + \epsilon$.
	
	By Corollary \ref{cor:twocomponents} we see that surfaces parametrized by $\ska$ have \emph{at most} two elliptically fibered components, but possibly with trees of pseudoelliptic surfaces attached to them. In Proposition \ref{prop:singlenormal12} we classify the possible surfaces parametrized by $\ska$ with a \emph{single normal elliptically fibered component}. In Theorem \ref{thm:singlejinfty} we classify the possible surfaces parametrized by $\ska$ with a \emph{single non-normal elliptically fibered component}. In Theorem \ref{thm:twomaincomp}, we classify the possible surfaces 
	parametrized by $\ska$ with \emph{two elliptically fibered components}. Finally, in Proposition \ref{prop:exist1} and Proposition \ref{prop:exist2}, we show that surfaces of each type appearing in the aforementioned results do exist on the boundary of $\ska$.
	
	\begin{lemma}There are Type $\wii$ walls where Type $\mathrm{I}$ pseudoelliptic surfaces form at $a = \frac{1}{k}$ for $k = 1, \dots, 11$. \end{lemma}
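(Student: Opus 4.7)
The plan is to pinpoint, for each $k \in \{1, \dots, 11\}$, the value of $a$ at which the relative log canonical MMP contracts the section of a rational elliptic component $Y \subset X$ of a broken limit that carries exactly $k$ of the $24$ marked fibers, producing a Type I pseudoelliptic surface in the sense of Definition \ref{def:pseudotypeI}. First I would perform an adjunction computation on the section $S_Y \cong \mathbb{P}^1$. Suppose $Y$ is attached to the rest of $X$ along a single (pseudo)fiber whose restriction $D|_Y$ meets $S_Y$ transversely in one point, and that $Y$ carries $k$ marked fibers $F_1, \dots, F_k$, each meeting $S_Y$ in one point. Then $(K_X + S + F_\calA)|_Y = K_Y + D|_Y + S_Y + a(F_1 + \dots + F_k)$, and intersecting with $S_Y$ gives
$$ (K_X + S + F_\calA)|_Y \cdot S_Y \;=\; (K_Y + S_Y)\cdot S_Y + D|_Y \cdot S_Y + ka \;=\; -2 + 1 + ka \;=\; ka - 1. $$
This is positive for $a > 1/k$, vanishes at $a = 1/k$, and is negative for $a < 1/k$, so by the log flipping contraction described in Remark \ref{rmk:flipping} the MMP contracts $S_Y$ precisely at $a = 1/k$; this is a Type $\wii$ wall in the sense of Definition \ref{def:walltypes}. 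The values $k = 1, \dots, 11$ are exactly those with $1/k > 1/12$.

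Next I would verify that for each such $k$ a configuration with an auxiliary rational elliptic component $Y$ carrying exactly $k$ marked fibers actually occurs as a stable limit inside $\ska$. By Theorem \ref{thm:tsm2} and the necessary conditions in Proposition \ref{prop:tsm}, it suffices to exhibit a twisted stable map whose target orbicurve has two rational components glued along one of the compatible pairs of twisted fibers listed in Proposition \ref{prop:tsm}(1), with exactly $k$ of the $24$ marked nodal fibers specializing onto the component supporting $Y$. The degree and multiplicity bookkeeping in Proposition \ref{prop:tsm}(2)--(5) is easily arranged for any $1 \le k \le 11$ by choosing a suitable gluing fiber type and distribution of markings, and the smoothability assertion of Theorem \ref{thm:tsm} guarantees that such twisted limits deform to the generic $24\mathrm{I}_1$ locus of $\ska$.

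Finally, to confirm that the pseudoelliptic that forms at $a = 1/k$ is really of Type I (attached via an intermediate pseudofiber, per Definition \ref{def:pseudotypeI}) rather than of Type II, I would compare $1/k$ with the intermediate transition thresholds $a_0$ recorded in \eqref{eq:a0} for each admissible gluing fiber type: one checks that the gluing fiber type can be chosen so that $a_0 > 1/k$, so that when $a$ descends to $1/k$ a Type $\wi$ wall has already converted the twisted gluing fiber to intermediate form, placing us exactly in the setup of Definition \ref{def:pseudotypeI}. The main obstacle is precisely this case-by-case compatibility: for each $k \in \{1, \dots, 11\}$ one must exhibit at least one admissible gluing with $a_0 > 1/k$ among the pairings of Proposition \ref{prop:tsm}(1), and then apply Theorem \ref{thm:main} to ensure that the section contraction at $a = 1/k$ destabilizes only $S_Y$ and does not collide with any other wall-crossing on other components. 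This reduces to a finite, fiber-type-indexed check using the list of $a_0$ values and the permitted gluing pairs.
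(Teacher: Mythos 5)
Your wall-location computation is correct and is in substance the same as the paper's argument: the paper simply invokes the correspondence between section contractions and Hassett stability of the weighted base curve (Theorem \ref{thm:forgetful}, Remark \ref{rmk:hassettwall2}), so that a component carrying $k$ fibers of weight $a$ loses its section exactly when $ka=1$; your adjunction computation $(K_X+S+F_\calA)|_Y\cdot S_Y = ka-1$ is a self-contained verification of the same numerical fact, and your existence argument via Proposition \ref{prop:tsm} and Theorem \ref{thm:tsm2} supplies a detail the paper leaves implicit. One small omission: the paper's reason for stopping at $k=11$ is not merely that $1/k>1/12$, but that one needs $24a>2$ for the ($\calA$-weighted, genus $0$) moduli problem to be nontrivial, which fails at $a=1/12$.

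The genuine problem is your final paragraph. You declare the "main obstacle" to be checking that the gluing fiber type can be chosen with intermediate threshold $a_0 > 1/k$ from \eqref{eq:a0}, and you leave this check unperformed. But this check is both misdirected and, as stated, impossible in the range you need: for $k=1$ you would require $a_0>1$, and no entry of \eqref{eq:a0} exceeds $5/6$. The values $a_0$ in \eqref{eq:a0} govern when a \emph{marked} fiber, whose own coefficient is $a$, passes between Weierstrass, intermediate and twisted form; they say nothing about the attaching fiber in the double locus, which carries coefficient $1$. The Type I structure does not come from a prior $\wi$ crossing of the gluing fiber: by Remark \ref{rmk:flipping} (La Nave, and \cite[Corollaries 6.7 \& 6.8]{calculations}), the contraction of the section at the Hassett wall is a log flip in the total space of a one-parameter degeneration, and it is this flip that puts the attaching fiber of the neighboring component into intermediate form, with the flipped section becoming the reduced rational component of that intermediate (pseudo)fiber. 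So the Type I conclusion is automatic from the flip structure, and the comparison with \eqref{eq:a0} should be dropped; with that replacement your argument closes correctly.
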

	\begin{proof} Recall that Type I pseudoelliptic surfaces form when a component of the underlying weighted curve is contracted -- this occurs when $ka=1$. Finally, note that $24a>2$ for each of these values of $k$ so that the moduli space is nontrivial. \end{proof}
	
	\begin{lemma} There are Type $\wiii$ walls at
		$
		a = \frac{5}{12}, \frac{3}{12}, \text{ and } \frac{2}{12}
		$
		where rational pseudoelliptic surfaces attached along intermediate type $\mathrm{II}$, $\mathrm{III}$ and $\mathrm{IV}$ fibers respectively contract to a point. \end{lemma}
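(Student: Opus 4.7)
My plan is to apply the general wall-crossing framework for stable pair limits of elliptic surfaces (Theorem \ref{thm:main} together with the analysis of \cite[Proposition 7.4]{calculations}) to each of the three intermediate fiber types in turn, and compute the weight $a$ at which the rational pseudoelliptic component contracts to a point.

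First, I would identify the rational pseudoelliptic $Z$ in question for each case. After the walls at $a = 5/6, 3/4, 2/3$ (which place the corresponding fibers in intermediate form), and the subsequent $\wii$ wall from the preceding lemma where a section contracts, we obtain a rational pseudoelliptic $Z$ arising as the image $\pi\colon X \to Z$ of a rational elliptic surface $(X,S)$ under the contraction of the section. Here $X$ has $\deg \mathscr{L} = 1$, an intermediate fiber of the given type at the point of attachment (with genus-one component $E$ glued to a pseudofiber $G_0 \subset Z$, as in Definition \ref{def:pseudotypeI}), and the remaining $\mathrm{I}_1$ fibers (with total discriminant-multiplicity $12 - m$ where $m \in \{2,3,4\}$ is the discriminant multiplicity of the fiber type $\mathrm{II}, \mathrm{III}, \mathrm{IV}$) marked with weight $a$.

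Next, I would compute the intersection numbers on $Z$. Since $S$ is a $(-1)$-curve on $X$, the projection formula gives $\pi^\ast K_Z = K_X - S$, and combined with $K_X \equiv -F$, $F^2 = 0$, $F \cdot S = 1$, $S^2 = -1$, this determines $K_Z \cdot \bar F_i$, $\bar F_i \cdot \bar F_j$, $K_Z \cdot D$, and $D \cdot \bar F_i$ on $Z$, where $D = G_0$ is the double locus on the pseudoelliptic side. The key additional input is the multiplicity structure of the intermediate fiber on the adjacent component, which depends on the weighted blowup resolving the cusp (weights $(2,3), (1,2), (1,1)$, or equivalently the order of the stabilizer of the corresponding twisted fiber, which is $6, 4, 3$ for types $\mathrm{II}, \mathrm{III}, \mathrm{IV}$), and this feeds into the effective coefficient of $D$ in the log canonical computation.

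Finally, I would set up the wall equation: by Definition \ref{def:walltypes}, the $\wiii$ wall occurs precisely when $L := K_Z + D + aF_\calA$ becomes numerically trivial on $Z$. Setting $L \cdot \bar F = 0$ (and checking consistency with $L \cdot D = 0$) yields a linear equation in $a$ whose solution is the wall value; the three cases produce $a = 5/12, 3/12, 2/12$ exactly because the coefficients of $a$ in the three equations differ by the discriminant multiplicities $m = 2, 3, 4$ of the intermediate fiber types, giving $a_{\text{wall}} = a_0^{\text{type}}/m$ with $a_0^{\text{type}} \in \{5/6, 3/4, 2/3\}$ from \eqref{eq:a0}. The main obstacle I foresee is the careful bookkeeping of the intermediate-fiber multiplicities and their propagation through the section contraction, together with verifying that the induced contraction of $Z$ is to a point rather than a curve (which requires showing $L^2 = 0$ on $Z$ at the wall value, not just $L \cdot C = 0$ for a generator $C$ of the fiber class). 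I would also need to check that no lower-type $\wi$ or $\wii$ wall interferes at the same value of $a$ in this range.
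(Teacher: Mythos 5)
There is a genuine gap in your setup: you have the attaching configuration backwards, and this makes your marking count wrong in a way that would derail the computation. In the lemma, the intermediate type $\mathrm{II}$, $\mathrm{III}$, $\mathrm{IV}$ fiber sits on the \emph{parent} component (cf.\ Definition \ref{def:pseudotypeI}: the pseudoelliptic's pseudofiber $G_0$ is glued to the genus-one component $E$ of an intermediate fiber of the component it sprouts from, and cf.\ Corollary \ref{cor:234:weierstrass}, which is about $\mathrm{II},\mathrm{III},\mathrm{IV}$ fibers of the main component reverting to Weierstrass form). By the monodromy/dual-pairing condition of Proposition \ref{prop:tsm}(1), the rational elliptic surface whose section-contraction is the attached pseudoelliptic must therefore carry the dual fiber $\mathrm{II}^*$, $\mathrm{III}^*$, $\mathrm{IV}^*$ in its double locus; since these contribute $10$, $9$, $8$ to the discriminant of a rational elliptic surface (total $12$), the pseudoelliptic supports exactly $2$, $3$, $4$ marked fibers counted with multiplicity. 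You instead place the intermediate $\mathrm{II}/\mathrm{III}/\mathrm{IV}$ fiber on the pseudoelliptic itself and give it $12-m = 10, 9, 8$ markings. This is precisely the wrong count: the wall at which the tree contracts is (as in the proof of Lemma \ref{lemma:N1} and \cite[Theorem 6.3]{master}) the log canonical threshold of the parent's intermediate fiber divided by the total marking weight carried by the pseudoelliptic, so with your count every case would give $\frac{5/6}{10} = \frac{3/4}{9} = \frac{2/3}{8} = \frac{1}{12}$, not $\frac{5}{12}, \frac{3}{12}, \frac{2}{12}$. Your final formula $a_0^{\mathrm{type}}/m$ is numerically the right answer, but it is not what your own setup produces; the step that makes it correct --- the dual-fiber observation forcing exactly $m = 2,3,4$ markings on the contracting component --- is missing, and it is exactly the content of the paper's short proof (which combines this observation with the wall classification of \cite[Theorem 6.3]{master}).

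Your broader strategy --- computing intersection numbers on the pseudoelliptic and solving $(K_Z + D + a F_\calA)\cdot \bar F = 0$, then checking the contraction is to a point --- is a legitimate alternative to citing \cite[Theorem 6.3]{master}, and indeed mirrors the explicit computations the paper carries out in Examples \ref{ex:1} and \ref{ex:2}. But to execute it you must first fix the configuration: the pseudoelliptic is attached along a twisted $\mathrm{II}^*/\mathrm{III}^*/\mathrm{IV}^*$ (pseudo)fiber, carries $m=2,3,4$ markings of weight $a$, and the threshold entering the computation is the $a_0$ value $5/6, 3/4, 2/3$ of the intermediate fiber on the adjacent component from \eqref{eq:a0}. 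Without the monodromy pairing, the identity $\operatorname{disc}(\mathrm{T}) + \operatorname{disc}(\mathrm{T}^*) = 12$ that makes the divisor equal to $m$ never enters, and the argument as written does not close.
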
 
	
	\begin{proof} This follows from \cite[Theorem 6.3]{master} as well as the observation that that a rational elliptic surface attached to a type $\mathrm{II}, \mathrm{III}$ or $\mathrm{IV}$ fiber must have a $\mathrm{II}^*, \mathrm{III}^*$, or $\mathrm{IV}^*$ fiber respectively and so it has $2,3$, or $4$ other marked fibers counted with multiplicity. \end{proof} 
	
	Since the above walls are all above $\frac{1}{12}$, we obtain the following: 
	
	\begin{cor}\label{cor:234:weierstrass} Any type $\mathrm{II}, \mathrm{III}$ and $\mathrm{IV}$ fiber on a surface parametrized by $\ktwelve$ is a Weierstrass fiber. In particular, there are no pseudoelliptic trees sprouting off of it. 
	\end{cor}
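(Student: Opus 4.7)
The plan is to deduce the corollary directly from the two preceding lemmas in this section together with the Type $\wi$ wall values recorded in equation (\ref{eq:a0}). The core observation is that every wall that could affect a type $\mathrm{II}$, $\mathrm{III}$, or $\mathrm{IV}$ fiber --- either by changing its form (twisted/intermediate/Weierstrass) via a $\wi$ wall, or by contracting an attached pseudoelliptic tree via a $\wiii$ wall --- occurs at a weight strictly greater than $\tfrac{1}{12}$, so all such transitions have already happened by the time we reach $\ktwelve$.

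First I would enumerate the relevant walls above $\tfrac{1}{12}$. From equation (\ref{eq:a0}) the $\wi$ transitions for the fiber types $\mathrm{II}, \mathrm{III}, \mathrm{IV}$ occur at $a_0 = 5/6,\, 3/4,\, 2/3$ respectively, and from the lemma immediately preceding the corollary the $\wiii$ walls where a pseudoelliptic tree attached along an intermediate $\mathrm{II}, \mathrm{III}, \mathrm{IV}$ fiber contracts to a point occur at $a = 5/12,\, 3/12,\, 2/12$. All six of these values are strictly greater than $\tfrac{1}{12}$. Applying Theorem \ref{thm:main}(2) to the chain of reduction morphisms from $a$ close to $1$ down to $a = \tfrac{1}{12}+\epsilon$, every surface parametrized by $\ktwelve$ arises by crossing all of these walls.

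Next I would argue the two assertions of the corollary. The statement that no pseudoelliptic tree sprouts off is immediate: by the second preceding lemma, any such tree has been contracted to a point by the time we cross the $\wiii$ wall at $5/12$ (for $\mathrm{II}$), $3/12$ (for $\mathrm{III}$), or $2/12$ (for $\mathrm{IV}$). For the statement that the fiber is in Weierstrass form, I would use the description in Remark \ref{rmk:flipping}: a Type $\mathrm{I}$ pseudoelliptic tree is attached along the arithmetic-genus-one component $E$ of an intermediate fiber, so the $\wiii$ contraction of the tree is followed (in the same wall-crossing step) by the log canonical contraction of $E$, yielding a Weierstrass fiber. For $\mathrm{II}/\mathrm{III}/\mathrm{IV}$ fibers that never carried a pseudoelliptic tree, the $E$ component would already be contracted at the corresponding $\wi$ wall $a_0$, since there is no divisor attached to prop it up against the log canonical contraction; hence such fibers are already Weierstrass below $a_0$.

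The only genuine thing to check --- and what I would expect to be the main (though minor) obstacle --- is the verification that no additional $\wi$ wall affecting $\mathrm{II}, \mathrm{III}, \mathrm{IV}$ fibers lies in the open interval $(\tfrac{1}{12}, a_0)$ and that the simultaneous contraction of the pseudoelliptic tree and of the adjacent $E$ component at the $\wiii$ wall is indeed what the log MMP produces. Both points can be read off from the explicit fiber-by-fiber analysis of the log canonical transformations in \cite{master} and \cite{calculations}, so the corollary follows.
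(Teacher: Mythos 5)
Your proposal is correct and follows essentially the same route as the paper, which derives the corollary immediately from the two preceding lemmas with the one-line observation that all the relevant $\wi$ and $\wiii$ walls (at $a_0 = 5/6, 3/4, 2/3$ and $a = 5/12, 3/12, 2/12$) lie strictly above $\tfrac{1}{12}$. Your additional elaboration of why crossing the $\wiii$ wall leaves the fiber in Weierstrass form just makes explicit what the paper leaves implicit.
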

	
	In a similar vein we have the following two lemmas: 
	
	\begin{lemma}\label{lemma:N1} There are Type $\wiii$ walls at
		$
		a = \frac{1}{4}, \frac{1}{6}, \frac{1}{8}, \text{ and } \frac{1}{10}, \text{ where: }
		$
		\begin{enumerate}
			\item rational pseudoelliptic surfaces attached along intermediate type $\mathrm{N}_1$ fibers contract onto a point;
			\item isotrivial $j$-invariant $\infty$ surfaces with $\deg \mathscr{L}=1$ attached along intermediate type $\mathrm{N}_1$ fibers contract onto a point. 
		\end{enumerate}
	\end{lemma}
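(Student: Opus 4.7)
The plan is to follow the strategy used in the proof of the preceding lemma, namely to combine the wall-finiteness result \cite[Theorem 6.3]{master} with a classification of the possible pseudoelliptic components attached along an intermediate $\mathrm{N}_1$ fiber. In the previous lemma, the walls occurred exactly at $a = a_0/k$ where $a_0$ is the intermediate coefficient of the companion fiber on the main component (from (\ref{eq:a0})) and $k$ is the number of marked $\mathrm{I}_1$ fibers on the pseudoelliptic. Since $\mathrm{N}_1$ has $a_0 = 1/2$, the expected walls are at $a = 1/(2k)$, and for $k = 2, 3, 4, 5$ these are precisely $1/4, 1/6, 1/8, 1/10$ --- the four values listed in the statement.

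First I would classify the pseudoelliptic configurations attached along an intermediate $\mathrm{N}_1$. By Proposition \ref{prop:tsm}(1), the companion twisted fiber on the pseudoelliptic must be of type $\mathrm{N}_1$, $\mathrm{I}_n^*$, or $\mathrm{I}_m^*$. In case (1), a non-isotrivial rational elliptic surface cannot carry an $\mathrm{N}_1$ fiber since the discriminant of $\mathrm{N}_1$ vanishes identically in a neighborhood (as can be checked directly from equation (\ref{eqn:Nk})), which would force isotriviality; hence the attaching fiber on the pseudoelliptic in case (1) must be a twisted $\mathrm{I}_n^*$. Using $e(\tilde{Y}) = 12$ and $e(\mathrm{I}_n^*) = 6+n$, the remaining Euler characteristic produces exactly $k = 6 - n$ marked $\mathrm{I}_1$ fibers on the pseudoelliptic (the other singular fibers must be $\mathrm{I}_1$ since the generic member of our family has $24\mathrm{I}_1$ fibers). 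The values $n = 1, 2, 3, 4$ give $k = 5, 4, 3, 2$, producing the four claimed walls; the case $n = 5$ gives $k = 1$, yielding the wall $a = 1/2$ already accounted for by Type I pseudoelliptic formation, while $n = 0$ gives $k = 6$ with wall $a = 1/12$ outside the current range. For case (2), Proposition \ref{prop:k3typejinfty} forces the isotrivial $j = \infty$ surface with $\deg \mathscr{L} = 1$ to have configuration $2\mathrm{N}_1$: one $\mathrm{N}_1$ serves as the attaching fiber, and by Proposition \ref{prop:Nkdiscriminant} the other $\mathrm{N}_1$ carries $k \geq 2$ marked $\mathrm{I}_1$ fibers. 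Restricted to $a > 1/12$, the allowed values are $k = 2, 3, 4, 5$, again yielding walls at $a = 1/4, 1/6, 1/8, 1/10$.

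Finally, applying \cite[Theorem 6.3]{master} to each configuration identifies these as Type $\wiii$ walls. The main obstacle is confirming that the uniform wall formula $ka = a_0$ applies equally to both cases: the attaching weight contribution at an intermediate $\mathrm{N}_1$ fiber (which carries $\mu_2$ monodromy) must match the $a_0 = 1/2$ of the companion fiber on the main component, and in case (2) one must verify that no additional corrections arise from the isotrivial $j = \infty$ nature of the pseudoelliptic. Once this identification is made via a direct Hassett stability computation on the base $\mb{P}^1$ of the pseudoelliptic --- tracking the weight of the attaching point together with the $k$ markings of weight $a$ --- finiteness of walls from \cite[Theorem 6.3]{master} precludes any additional walls in this range, completing the proof.
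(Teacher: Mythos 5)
Your proposal follows essentially the same route as the paper's proof: identify the companion fiber on the attached component (a twisted $\mathrm{I}_n^*$ in the rational case, another $\mathrm{N}_1$ in the isotrivial $j$-invariant $\infty$ case), count the markings it must carry from the leftover discriminant/Euler characteristic, and note that the tree contracts when the total marking weight reaches the lct $\tfrac{1}{2}$ of the intermediate $\mathrm{N}_1$, giving $a = \tfrac{1}{2k}$ for $k = 2,\dots,5$; the ``main obstacle'' you flag at the end is exactly what the log canonical threshold computation already supplies, so no separate Hassett-stability verification is needed. Three small corrections: the classification of $\deg \mathscr{L} = 1$ isotrivial $j = \infty$ surfaces follows from Proposition \ref{prop:jinftyfibers} (giving $2\mathrm{N}_1$ once one fiber is forced to be $\mathrm{N}_1$ by monodromy), not from Proposition \ref{prop:k3typejinfty}, which is the $\deg \mathscr{L} = 2$ statement; the case $n = 5$ should not be discharged by appeal to Type I pseudoelliptic formation (that is a $\wii$ wall, whereas a contraction would be a $\wiii$ wall) --- it simply does not occur, since by Persson's classification a rational elliptic surface carries an $\mathrm{I}_n^*$ fiber only for $0 \le n \le 4$; and that same classification is what guarantees the configurations for $n = 1,\dots,4$ actually exist, i.e.\ that the four listed values are genuinely walls, a point your write-up leaves implicit but the paper makes explicit.
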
 
	
	\begin{proof} A rational elliptic surface attached along an $\mathrm{N}_1$ fiber must have an $\mathrm{I}_k^*$ fiber in the double locus. Since an $\mathrm{I}_k^*$ has discriminant $6 + k$, then there are $6 - k$ markings counted with multiplicity on the rational pseudoelliptic. By the classification in \cite{persson}, there exist rational elliptic surfaces with $\mathrm{I}_k^*$ for $0 
		\leq k \leq 4$. Since the log canonical threshold of an intermediate $\mathrm{N}_1$ fiber is $\frac{1}{2}$, then the surfaces with $\mathrm{N}_1/\mathrm{I}_k^*$ double locus contract at $\frac{1}{2(6 - k)}$. These give walls above $\frac{1}{12}$ for $1 \leq k \leq 4$. Similarly, isotrivial $j$-invariant $\infty$ surfaces with an $\mathrm{N}_1$ fiber and $\deg \mathscr{L} = 1$ must be attached along another $\mathrm{N}_1$ fiber and so contract at $\frac{1}{2k}$ where they support $k$ fibers. 
	\end{proof}

	Next we consider the base curve at $\frac{1}{12} + \epsilon$. 
	
	\begin{lemma}\label{lem:stablecurves} Let $\calA = (a, \ldots, a)$ for $a = \frac{1}{12} + \epsilon$. Then curves $C$ parametrized by $\overline{\calM}_{0, \calA}$ are either
		\begin{enumerate}
			\item a smooth $\bP^1$ with 24 marked points, with at most $11$ markings coinciding, or
			\item the union of two rational curves, each with 12 marked points and at most $11$ markings coinciding.
		\end{enumerate}
	\end{lemma}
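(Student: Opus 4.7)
The plan is to apply Hassett's stability criteria for weighted pointed genus zero curves directly, using only that $a = \frac{1}{12} + \epsilon$ with $0 < \epsilon \ll 1$. Recall that a weighted pointed nodal curve $(C, \sum a_j p_j)$ is $\calA$-stable if and only if (i) for every irreducible component $C_i$, the restriction of $\omega_C(\sum a_j p_j)$ to $C_i$ has positive degree, and (ii) at any point of $C$ the sum of weights of markings concentrated there is at most $1$. I will deduce the two assertions of the lemma from these conditions.

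First I handle (ii). Since each weight equals $a$, a subset of $m$ markings may coincide only if $ma \leq 1$. As $12a = 1 + 12\epsilon > 1$ while $11a = \frac{11}{12}+11\epsilon < 1$ for $\epsilon$ sufficiently small, the maximal number of coinciding markings is $11$, which yields the stated bound in both cases of the lemma.

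For (i), write $C$ as a tree of $c$ smooth rational components (since $p_a(C)=0$). For a component $C_i$ meeting the rest of $C$ at $n_i$ nodes and carrying $k_i$ markings (counted with multiplicity), the degree of $\omega_C(\sum a_j p_j)|_{C_i}$ equals $-2 + n_i + k_i a$, so positivity reads $k_i a > 2 - n_i$. A leaf ($n_i = 1$) therefore requires $k_i a > 1$, i.e.\ $k_i \geq 12$; an internal component with $n_i = 2$ requires $k_i \geq 1$; and components with $n_i \geq 3$ have no constraint on $k_i$. A standard graph-theoretic argument shows that any tree with $c \geq 2$ vertices has at least two leaves, so the total marking count satisfies $\sum k_i \geq 12 L$ where $L \geq 2$ is the number of leaves. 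Since $\sum k_i = 24$, this forces $L = 2$ and $c \leq 2$.

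The main (and only nontrivial) step is ruling out $c \geq 3$. For $c = 3$ the tree is necessarily a chain, so there is one internal component of valence $2$ needing $\geq 1$ extra marking, producing the contradiction $24 = \sum k_i \geq 12 + 12 + 1 = 25$. For $c \geq 4$ one analogously bounds below $\sum k_i$ using either a second internal valence-$2$ component (in the path case) or a third leaf (in the branching case), and each possibility exceeds $24$. Hence $c \in \{1,2\}$, and when $c = 2$ the two leaves each carry exactly $12$ markings. Combining this with the coincidence bound from (ii) yields the two cases stated in the lemma.
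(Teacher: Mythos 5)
Your proof is correct and follows essentially the same route as the paper: Hassett's stability criterion forces any leaf component to carry at least $12$ markings (since $11a<1<12a$) and any internal component at least one, and counting against the total of $24$ rules out three or more components, while the coincidence bound of $11$ comes from the weight-$\leq 1$ condition at a point. The only difference is cosmetic — you treat the $c\geq 4$ trees explicitly, whereas the paper spells out only the three-component case — and note that your interim claim ``this forces $L=2$ and $c\le 2$'' is only justified by the subsequent paragraph, which you do supply.
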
 
	
	\begin{proof} If $C$ is a smooth $\bP^1$, since the total weight for any marking is $\leq 1$ we see that $\leq 11$ points can coincide.   If $C$ is the union of two rational curves, since each point is weighted by $\frac{1}{12}+ \epsilon$, and since each curve needs total weight $>2$ (including the node), each curve must have (exactly) 12 points, and again at most 11 can coincide. Finally, suppose $C$ is the union of three components $C = \cup_{i =1}^3 C_i$ with $C_1$ and $C_3$ the end components. Since the $C_2$ component needs at least one marking to be stable, at least one of $C_1, C_3$ will not have enough marked points to be stable. 
	\end{proof}

	\begin{cor}\label{cor:twocomponents} Let $(f : X \to C, S + F_a)$ be a surface pair parametrized by $\ktwelve$. Then $f : X \to C$ has at most two elliptically fibered components. \end{cor}
	
	\begin{remark} Note that  $X$ can have many Type I pseudoelliptic components mapping by $f$ onto marked points of $C$. \end{remark}
	
	\begin{definition}\label{def:maincomponent} If $(f : X \to C, S + F_a)$ a surface pair parametrized by $\ktwelve$, the \textbf{main component of $X$} denoted by $X_m$, is the union of all elliptically fibered components of $f : X \to C$. 
	\end{definition} 
	
	\begin{remark} \label{rem:maincomponent} By Corollary \ref{cor:twocomponents}, for all surfaces pairs parametrized by $\ktwelve$, either $X_m$ and $C$ are irreducible or $X_m = X_1 \cup X_2$ and $C = C_1 \cup C_2$ where $X_i$ and $C_i$ are irreducible $f|_{X_i} : X_i \to C_i$ is an elliptic fibration.  
	\end{remark}

	\subsection{Explicit classification of surfaces inside $\ktwelve$}\label{sec:explicitktwelve}
	
	We conclude that every surface parametrized by $\ktwelve$ consists of a main component (see Definition \ref{def:maincomponent}) possibly with trees of pseudoelliptics sprouting off. In order to do understand the possible main components $X_m$ parametrized by $\ktwelve$, we will use the following construction of a Weierstrass model for $X_m$. 
	
	\subsubsection{Construction of a family of Weierstrass models}\label{rmk:construction} 
	Let $(f_0 : X_0 \to C_0, S_0 + (F_a)_0)$ be an elliptic surface pair parametrized by $\ktwelve$, which by Corollary \ref{cor:twocomponents} has at most two elliptic components. Consider a 1-parameter family $(f : \mathscr{X} \to \mathscr{C}, \mathscr{S} + \mathscr{F}_a) \to T$ with generic fiber $(f : X_\eta \to C_\eta, S_\eta + (F_a)_\eta)$ a 24$\mathrm{I}_1$ elliptic K3 surface and special fiber $X_0$. Let $\mathscr{G}_\eta$ be a generic smooth fiber of the elliptic fibration $f: \mathscr{X} \to \mathscr{C}$ so that the closure $\mathscr{G}$ is a generic smooth fiber of $f_0 : X_0 \to C_0$. In particular, $G_0 = \mathscr{G}_0$ avoids any pseudoelliptic trees of $X_0$. 
	
	Let $Y_0$ denote the irreducible component of $X_0$ on which $G_0$ lies. The component $Y_0$ is necessarily elliptically fibered, and so either $Y_0 = X_m$ is the main component or $X_m = Y_0 \cup_{H_0} Y_1$ glued along a twisted fiber $H_0$. To classify the possible elliptically fibered components of $X_0$, we will take the relative log canonical model of the pair $(\mathscr{X}, \mathscr{S} + \mathscr{G}) \to T$ using the main results of \cite{master}. 
	
	First, if $X_m = Y_0 \cup Y_1$, there is a Type $\wii$ crossing causing a flip of the section of $Y_1$ so that $Y_1$ becomes a Type $\mathrm{I}$ pseudoelliptic. Then in either case, we have a new family where $Y_0$ is the unique elliptically fibered component with trees of Type $\mathrm{I}$ pseudoelliptic surfaces sprouting off of it. We make the following assumption, and revisit it when we see it holds in Lemmas \ref{lem:nonlc} and \ref{lem:sing:noass1}.
	
	\begin{assumption}\label{ass:1} Suppose every Type $\mathrm{I}$ pseudoelliptic tree attached to $Y_0$ is attached along the intermediate model of a log canonical Weierstrass cusp.
	\end{assumption}
	
	There exists a sequence of Type $\mathrm{W}_\mathrm{III}$ extremal contractions followed by a Type $\mathrm{W}_\mathrm{III}$ relative log canonical morphism of the family that contract the trees of Type $\mathrm{I}$ pseudoelliptic components to a point resulting in a Weierstrass model $Y'$ of $Y_0$. Denote the resulting family of surfaces $\mathscr{X}' \to T$. 
	
	Since Type $\mathrm{W}_\mathrm{III}$ contractions preserve the generic fiber of the family $\mathscr{X} \to T$, we must only check Type $\mathrm{W}_\mathrm{II}$ contractions of the section $S$. By \cite[Proposition 5.9]{giovanni}, we may blow up the point to which the section has contracted to preserve the generic fiber of the family, and so  we have that $\mathscr{X}'_\eta = \mathscr{X}$. The resulting family of fibrations $(\mathscr{X}' \to \mathscr{C}) \to T$ is a family of slc Weierstrass models over $\mathbb{P}^1$ with $\deg(\mathscr{L}) = 2$, generic fiber a $24\mathrm{I}_1$ elliptic K3, and special fiber $Y'$. By Remark \ref{rmk:lcusp}, we can conclude that $Y'$ is one of the following:
	\begin{list1}\leavevmode
		\begin{enumerate}
			\item a minimal Weierstrass elliptic K3 surface ($\deg\mathscr{L} = 2$), 
			\item a rational elliptic surface with a single type L cusp, or
			\item an isotrivial elliptic surface with two type L cusps and all other fibers stable. 
		\end{enumerate}
	\end{list1}

	By considering the discriminant of $\mathscr{X}' \to \mathscr{C}$ as a flat family of divisors on $\mathscr{C}$, we have the following key observation: 
	
	\begin{remark}\label{obs1} Suppose $Y' \to C_0$ is normal. The number of $\mathrm{I}_1$ fibers of the generic fibration $X_\eta \to C_\eta$ that collide onto a singular fiber $F$ of $Y' \to C_0$ is the multiplicity of $F$ in the discriminant of the Weierstrass model $Y' \to C_0$. \end{remark} 
	
	We can use this observation to constrain the possible components of the twisted stable maps limit of $(f : \mathscr{X}_\eta \to \mathscr{C}_\eta, \mathscr{S}_\eta + \mathscr{F})$. In this limit, the singular fibers $(f : \mathscr{X}_\eta \to \mathscr{C}_\eta)$ cannot collide since they are marked with coefficient one. Let $Y''$ be the unique component of a twisted model that maps birationally to the component $Y'$ in the above family of Weierstrass models. Then each connected component of the complement of $Y''$ is a tree of twisted surfaces that gets collapsed onto a fiber of $Y''$ by the sequence of flips and contractions that produce the Weierstrass model above. In particular the number of marked fibers on each tree of elliptic components sprouting off a fiber of $Y''$ is exactly the multiplicity of the resulting of the discriminant of the resulting singular fiber on the Weierstrass model $Y'$. 
	
	\begin{remark}\label{obs2}The type L cusps are the Weierstrass model of an intermediate fiber of type $\mathrm{I}_m$ for $m \geq 0$. Such fibers are not contracted until they have coefficient 0, and so any pseudoelliptic tree glued along a type $\mathrm{I}_m$ fiber will remain when lowering coefficients to any $\epsilon > 0$. \end{remark}
	
	Finally we revisit Assumption \ref{ass:1}. We first need the following characterization of intermediate models of non-log-canonical Weierstrass cusps. 
	
	\begin{lemma} \label{lem:nonlc} Suppose $X = X_0 \cup_G X_1$ is a smoothable broken elliptic surface that is the union of broken elliptic surfaces $X_i \to C_i$ where $C_i \cong \mb{P}^1$ and each $X_i$ has a unique main component. Let $X'$ be the result of the Type $\mathrm{II}$ pseudoelliptic flip of the section of $X_0$, so that the strict transform $X_0'$ is attached to $X_1'$ by an intermediate fiber $A \cup G$. Then $A \cup G$ is the intermediate fiber of an slc cusp if and only if $-S_0^2 \le 1$, where $S_0$ is the section of $X_0 \to C_0$. 
	\end{lemma}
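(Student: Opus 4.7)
The plan is to trace through the Type $\wii$ pseudoelliptic flip in the total space of a one-parameter smoothing of $X$, identify the resulting Weierstrass cusp on the strict transform $X_1'$, and apply the classification of slc Weierstrass cusps (Definition \ref{def:l}, Lemma \ref{lem:lcusp}, and Remark \ref{rmk:lcusp}) to determine the threshold.

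First, I would recall from \cite[Section 4.3]{ln} and \cite[Section 8]{master} (see also Remark \ref{rmk:flipping}) that the flipping contraction contracts the section $S_0 \subset X_0$ to a point in the total space of the smoothing, and the flip replaces this point with a new rational curve $A \subset X_1'$ meeting the twisted fiber $G$ at the point where the section $S_1 \subset X_1$ originally met $G$. By construction $A \cup G$ is an intermediate fiber of the elliptic fibration $X_1' \to C_1$, whose Weierstrass model is obtained by contracting $A$ to a point, which we denote $q$.

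Second, I would compute the resulting Weierstrass cusp type in terms of $-S_0^2$. The key observation is that the total discriminant degree of the elliptic fibration is preserved under the flip, while after the flip the pseudoelliptic component $X_0'$ no longer contributes to the fibration structure. Hence the entire discriminant contribution of $X_0$ is transferred to the new cusp at $q$ on the Weierstrass model of $X_1'$. Using the canonical bundle formula $-S_0^2 = \deg \mathscr{L}_{X_0}$ on the minimal Weierstrass model of the main component of $X_0$, the local contribution of the new cusp to $\deg \mathscr{L}$ on the Weierstrass side is exactly $-S_0^2$.

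Third, by Definition \ref{def:l} together with Remark \ref{rmk:lcusp}, a Weierstrass cusp is Kodaira type (with local $\deg \mathscr{L}$ contribution $0$) or type L (with local contribution exactly $1$) precisely when $\min(3 v_q(A), 2 v_q(B)) \leq 12$; by Lemma \ref{lem:lcusp} this is exactly the log canonical range, and cusps of higher local contribution are non-lc. Since $X_1'$ is normal near $q$ and slc-ness for a normal surface means lc, this gives the desired equivalence: $A \cup G$ is an intermediate fiber of an slc cusp iff the local contribution at $q$ is $\leq 1$ iff $-S_0^2 \leq 1$.

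The main obstacle is the intersection-theoretic bookkeeping in the second step that establishes the transfer of the entire degree $\deg \mathscr{L}_{X_0}$ to the local contribution at $q$; this reduces to a local three-fold calculation on the total space of the smoothing near the flipped section, most of which appears in \cite[Section 7.1]{ln} and \cite[Section 8]{master} and can be performed explicitly using the adjunction formula $K_{X_0}\cdot S_0 = -2 - S_0^2$ together with the log crepancy of the flip.
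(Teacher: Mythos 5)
Your strategy is genuinely different from the paper's. You propose to compute the cusp directly: show that the flip transfers exactly $-S_0^2$ to the non-minimality (fundamental line bundle contribution) of the Weierstrass model of $A \cup G$ on $X_1'$, then invoke the classification of slc Weierstrass cusps. The paper never computes the cusp type at all: it notes that the Weierstrass model of $A\cup G$ is lc if and only if $G$ contracts onto the Weierstrass model in the log canonical model of $(X_1,S+G)$, uses smoothability to identify this with the contraction of the pseudoelliptic $X_0'$ to a point in the lc model of $X$ with the coefficients on $X_0'$ set to $0$, and then applies \cite[Proposition 7.4]{calculations}, which says this happens exactly when $X_0'$ comes from a minimal rational elliptic surface, i.e.\ when $-S_0^2 \le 1$ (the case $<1$ occurring when $G$ is a twisted fiber). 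Your route, if completed, would give more refined information (the actual cusp type, consistent with the $\mathrm{N}_3$, $\mathrm{N}_4$ and type L cusps appearing later in the paper), but it carries the burden of a local computation the paper deliberately sidesteps.

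As written, though, there are two genuine gaps. First, the claim that ``$X_1'$ is normal near $q$'' fails in precisely the situations where the lemma does real work: when $X_1$ is an isotrivial $j$-invariant $\infty$ component (as in Theorem \ref{thm:singlejinfty} (d),(e) and Theorem \ref{thm:twomaincomp} (2),(3)), the surface is non-normal along the double curve through the cusp, the resulting fiber is of type $\mathrm{N}_k$, and the relevant criterion is $k\le 2$ (via \cite[Lemma 3.2.2]{ln} together with Proposition \ref{prop:jinftyfibers}, which gives contribution $k/2$), not Definition \ref{def:l} and Lemma \ref{lem:lcusp}; the threshold happens to again be ``contribution $\le 1$,'' but your cited ingredients do not cover this case, and Lemma \ref{lem:lcusp} alone also does not show that cusps with $\min(3v_q(A),2v_q(B))>12$ are non-lc (that is La Nave's classification). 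Second, the central quantitative claim --- that the flip transfers exactly $-S_0^2$, including the fractional corrections when $G$ is a twisted fiber (e.g.\ $-S_0^2=\tfrac{3}{2}$ for a K3 glued along a twisted $\mathrm{I}_n^*$, producing an $\mathrm{N}_3$) --- is asserted rather than proved. The ``total discriminant is preserved'' heuristic does not yield it: at this stage $X_0'$ has not been contracted, so no discriminant has been ``absorbed,'' and the cusp type is a local invariant of the flip that must be extracted from the three-fold geometry (the computations of \cite[Section 7.1]{ln} as used in Examples \ref{ex:1} and \ref{ex:2}). Since this is exactly the step you defer, the argument as it stands does not yet establish the equivalence; either carry out that local calculation in both the stable and twisted gluing cases, or replace Step 2 by the paper's smoothability argument reducing to \cite[Proposition 7.4]{calculations}.
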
 
	
	\begin{proof} The question is local around a neighborhood of the flip. Therefore, we may assume that $X_0$ and $X_1$ are irreducible, so that there are no pseudoelliptic trees sprouting off either of them. On the component $X_1'$ we have the divisor $S_1 + aA + G$. Note that $G$ has coefficient one since it is in the double locus, and the coefficient $a$ is given by the sum of coefficients of marked fibers on $X_1'$. Then the Weierstrass model of $A \cup G$ inside $X_1'$ has log canonical singularities if and only if the $G$ contracts onto the Weierstrass model in the log canonical model of the pair $(X_1, S + G)$, i.e., when all the coefficients on $X_0'$ are $0$. Since the pair is smoothable, this occurs if and only if $X_0'$ contracts to a point in the log canonical model of $X$, where all the coefficients on $X_0'$ are set to $0$. Since $G$ is marked with coefficient one on $X_0'$, this occurs if only if $X_0'$ is a minimal rational elliptic surface by \cite[Proposition 7.4]{calculations} which holds if and only if $-S_0^2 \le 1$ (where the case $< 1$ happens if $G$ is a twisted fiber rather than a stable fiber of $X_0$). 
	\end{proof}

	\begin{lemma}\label{lem:sing:noass1} Let $X$ be a surface parametrized by $\ktwelve$ and suppose $Y \subset X_m$ is a normal main component. Then Assumption \ref{ass:1} is satisfied for every pseudoelliptic tree attached to $Y$. Moreover, the fibers these pseudoelliptic trees are attached to are minimal intermediate fibers. 
	\end{lemma}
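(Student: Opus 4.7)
The plan is to translate Assumption \ref{ass:1} via Lemma \ref{lem:nonlc} into a numerical bound on the associated elliptic surface of each root pseudoelliptic, and then to rule out the bad case using the twisted stable maps structure together with the wall analysis already carried out in this section.

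By Lemma \ref{lem:nonlc}, Assumption \ref{ass:1} holds for a pseudoelliptic tree attached to $Y$ along an intermediate fiber $A\cup G$ if and only if the root pseudoelliptic's associated elliptic surface $X_0 \to C_0$ satisfies $-S_0^2 = \deg\mathscr{L}_{X_0} \leq 1$. So it is enough to rule out root pseudoelliptics with $\deg\mathscr{L}_{X_0}\geq 2$ for every such tree.

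I proceed by contradiction. Suppose some root pseudoelliptic has $\deg\mathscr{L}_{X_0}\geq 2$, and consider the twisted stable maps limit of a smoothing of $X$ to a generic $24\mathrm{I}_1$ elliptic K3. The component of the twisted limit containing this root has $j$-map degree $12\deg\mathscr{L}_{X_0}\geq 24$. By Proposition \ref{prop:tsm} (2) the total $j$-map degree is exactly $24$, so every other elliptically fibered component, and in particular the main component $Y$, must be isotrivial. By the ``Weierstrass Limits'' trichotomy, the Weierstrass model $Y'$ of $Y$ is then forced into case~$(3)$: isotrivial with two type $\mathrm{L}$ cusps. Moreover, since the root pseudoelliptic is of K3-type, its pseudofiber $G_0$ attaches to the $E$ component of an intermediate fiber of $Y$, forcing $E$ to be a genus-one curve matching the pseudofiber of $X_0$. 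A case analysis using Proposition \ref{prop:tsm} (1), Proposition \ref{prop:k3typejinfty}, and Corollary \ref{cor:monodromyNk} then rules out every such attachment: the allowed $\mathrm{N}_1$ and $\mathrm{N}_2$ fibers on an isotrivial case~$(3)$ main component $Y$ together with their twisted models ($\mathrm{N}_0$ or twisted $\mathrm{N}_1$) do not furnish a compatible intermediate gluing point for a K3-type root. This contradiction establishes $\deg\mathscr{L}_{X_0} \leq 1$.

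For the minimality assertion, I observe that the Type $\mathrm{W}_{\mathrm{I}}$ transition weights $a_0$ listed in \eqref{eq:a0}, at which a fiber passes between intermediate and twisted/Weierstrass forms, are all strictly greater than $\tfrac{1}{12}$. Hence at our coefficient $a = \tfrac{1}{12}+\epsilon$, every intermediate fiber on $Y$ has already been placed in its minimal form, with irreducible smooth genus-one $E$ component. Since pseudoelliptic trees attach along $E$, the attaching intermediate fibers are automatically minimal.

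The main obstacle will be the detailed gluing analysis in the contradiction step. One must carefully enumerate and rule out every possible attachment of a K3-type Type I pseudoelliptic tree to an isotrivial case~$(3)$ main component $Y$, combining the discriminant multiplicity bookkeeping of Remark \ref{obs1} with the fiber-type compatibility dictated by Proposition \ref{prop:tsm} (1) and Corollary \ref{cor:monodromyNk}. The sub-case where both $Y$ and the root pseudoelliptic carry $j$-invariant $\infty$ requires particular care, since the global $j$-map degree equation can numerically balance; here the contradiction must instead be extracted from the combinatorial matching of marked nodal fibers in Proposition \ref{prop:tsm} (3) and (5).
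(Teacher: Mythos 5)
Your reduction of Assumption~\ref{ass:1} via Lemma~\ref{lem:nonlc} to the inequality $-S_0^2 = \deg\mathscr{L}_{X_0}\leq 1$ for the associated elliptic surface of each root pseudoelliptic is valid, but the contradiction argument that follows has real gaps, and the paper's route is different and much shorter.

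The central problem is the assertion that ``the component of the twisted limit containing this root has $j$-map degree $12\deg\mathscr{L}_{X_0}\geq 24$.'' This conflates the $j$-map degree with the degree of the discriminant. What is true is that the discriminant of a Weierstrass model has degree $12\deg\mathscr{L}$; the degree of the $j$-map can be anywhere between $0$ and $12\deg\mathscr{L}$ and is in general much smaller. For instance, an isotrivial K3-type component has $\deg\mathscr{L} = 2$ but $j$-map degree $0$. So even if $\deg\mathscr{L}_{X_0}\geq 2$, you cannot conclude from Proposition~\ref{prop:tsm}(2) that the remaining components are isotrivial, and the rest of the contradiction argument does not get off the ground; replacing ``$j$-map degree'' with ``discriminant degree'' does not help because the discriminant degree is not something Proposition~\ref{prop:tsm} constrains for a twisted stable map, particularly once non-normal components enter.

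The minimality half also misidentifies what is being claimed. The $\mathrm{W}_{\mathrm{I}}$ transition weights in~\eqref{eq:a0} govern when a given Kodaira cusp passes between Weierstrass, intermediate, and twisted form in the relative log canonical model; they say nothing about whether the cusp is a minimal Kodaira type in the first place. ``Minimal intermediate fiber'' in this lemma means the intermediate model of a minimal Kodaira fiber (type $\mathrm{I}_n$, $\mathrm{I}_n^*$, $\mathrm{II}$, \dots, $\mathrm{IV}^*$), as opposed to a type~$\mathrm{L}$ cusp. Knowing that at $a = \tfrac{1}{12}+\epsilon$ the fiber is in intermediate form does not exclude the type~$\mathrm{L}$ case.

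The paper does none of this. It observes that a pseudoelliptic tree $Z$ attached to the normal main component $Y$ lies over a single marked point of the Hassett-stable base curve, and by Lemma~\ref{lem:stablecurves} at most $11$ of the $24$ markings can coincide there. By Remark~\ref{obs1} the number of markings on $Z$ equals the discriminant multiplicity of the fiber $F$ in the Weierstrass model of $Y$, so that multiplicity is $<12$, whence $\min\{3v(A),2v(B)\}<12$ and $F$ is a minimal Kodaira type. This single discriminant bound delivers both the minimality claim and Assumption~\ref{ass:1} at once, with no case analysis and no appeal to Lemma~\ref{lem:nonlc}. I would recommend abandoning the $j$-degree contradiction and instead using the marking count plus Remark~\ref{obs1} directly.
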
 
	\begin{proof} 
		Let $X' \to C'$ denote the twisted stable maps model of $X \to C$, and let $X_m'$ and $Y'$ denote the strict transform of $X_m$ and $Y$ in $X'$. Let $Z$ be a pseudoelliptic glued to an intermediate fiber $F$ of $Y$, and let $Z'$ be the components of $X'$ that map to $Z$. By Remark \ref{obs1}, the number of markings on $Z$ is equal to the contribution of $F$ to the discriminant of the Weierstrass model of $Y$. Since $X_m$ is the main component, there are   $< 12$ markings on $Z$, and so the order of vanishing of the discriminant of $F$ in $Y$ is $<12$. It follows that the order of vanishing of the Weierstrass data in a neighborhood of this fiber satisfy $\min\{3v(a), 2v(b)\} < 12$ so these are minimal Kodaira types by the standard classification. \end{proof}

	\subsubsection{$X_m$ is irreducible} We first deal with the case where the main component $X_m$ of a surface parametrized by $\ktwelve$ is irreducible. 
	
	\begin{prop}\label{prop:singlenormal12}
		Let $X$ be a surface parametrized by $\ktwelve$ such that the main component $X_m$ is irreducible and normal. Then $X_m$ is a minimal elliptic K3 surface with trees of pseudoelliptic surfaces of Type I attached along intermediate models of $\mathrm{I}_n^*, \mathrm{II}^*, \mathrm{III}^*$ and $\mathrm{IV}^*$ fibers. 
	\end{prop}
	
	\begin{proof} By Lemma \ref{lem:sing:noass1}, Assumption \ref{ass:1} is satisfied. Following Construction \ref{rmk:construction}, we saw that there are three possibilities for the Weierstrass stable replacement of the main component $X_m$ of a surface in $\ktwelve$. In case (1) we have a minimal Weierstrass elliptic K3 surface. Then since all fibers are minimal Weierstrass fibers, any pseudoelliptic surface has to be attached by the intermediate model of a minimal Weierstrass fiber. These are exactly the intermediate models of type $\mathrm{I}_n^*, \mathrm{II}, \mathrm{III}, \mathrm{IV}, \mathrm{II}^*, \mathrm{III}^*, \mathrm{IV}^*$, since type $\mathrm{I}_n$ Weierstrass fibers do not have intermediate models. By Lemma \ref{cor:234:weierstrass}, pseudoelliptics sprouting off of $\mathrm{II}, \mathrm{III}$ and $\mathrm{IV}$ fibers have contracted onto the Weierstrass model. We now rule out cases (2) and (3) of Construction \ref{rmk:construction}. 
		
		In case (2), the Weierstrass model of the main component is a rational elliptic surface with exactly one type L cusp. In this case, there must be a Type $\mathrm{I}$ pseudoelliptic tree $Z$ in $X$  attached to $X_m$ along an intermediate model of an L cusp, and by Remark \ref{obs1}, there are $12$ marked pseudofibers on $Z$. Let $X_1 \to C_1$ be a twisted stable maps model that maps to $X$ in $\ktwelve$. We may write $X_1 = Y_1 \sqcup_{\mathrm{I}_n} Z_1$ where
		\begin{enumerate}
			\item $Z_1$ is a broken elliptic fibration that dominates the pseudoelliptic tree $Z$,
			\item $Y_1$ is a broken elliptic fibration that dominates $X \setminus Z$, 
			\item  the component of $Y_1$ supporting the fiber $Y_1 \cap Z_1 = \mathrm{I}_n$ is birational to $X_m$, and
			\item the $Y_1 \cap Z_1 = \mathrm{I}_n$ fiber becomes the intermediate fiber on $X_m$ after $Z_1$ undergoes a Type $\mathrm{II}$ transformation into the pseudoelliptic tree $Z$. 
		\end{enumerate}
		Then $12$ of the marked fibers of $X_1 \to C_1$ must lie on $Z_1$ and the other $12$ on $Y_1$. In particular there is a node of $C_1$, such that if we separate $C_1$ along that node, we obtain two trees of rational curves each with $12$ marked points. However, this means the stable replacement of $C_1$ inside the Hassett space $\overline{\calM}_{0, \calA}$ for $\calA = (a, \ldots, a)$ with $a = \frac{1}{12} + \epsilon$, is a nodal union of two components, contradicting that $X$ has only one main component. 
		
		In case (3), the Weierstrass model of $X_m$ is a trivial surface with exactly two type L cusps and all other fibers stable. There must be Type $\mathrm{I}$ pseudoelliptic trees attached along each of these L cusp fibers in $X_m$, and no other pseudoelliptic trees attached to $X_m$, as every other fiber of its Weierstrass model is stable. As in the previous analysis, let $X_1 \to C_1$ be a twisted stable maps surface whose image in $\ktwelve$ is $X$, and let $X'$ be the component of $X_1$ that dominates $X_m$. Then $X'$ is attached to exactly two other components of $X_1$, so by stability it must have at least one marked point on it. Since $X_1 \to C_1$ is the twisted stable maps model, all the marked fibers have $j$-invariant $\infty$ and so since $X'$ is isotrivial, it must be non-normal, a contradiction. 
	\end{proof} 
	
	Next we consider the irreducible, but non-normal main component case.

	\begin{theorem}\label{thm:singlejinfty} Let $X$ be a surface parametrized by $\ktwelve$ with an irreducible non-normal main component $X_m$. Then one of the following holds. 
		
		\begin{enumerate}
			\item[(a)] $X_m$ is an isotrivial $j = \infty$ fibration with $4\mathrm{N}_1$ minimal Weierstrass fibers;
			\item[(b)] $X_m$ is an isotrivial $j = \infty$ fibration with $2\mathrm{N}_1$ minimal Weierstrass fibers, as well as an intermediate $\mathrm{N}_2$ fiber which must have a tree of pseudoelliptic surfaces attached to it along a type $\mathrm{I}_n$ pseudofiber;
			\item[(c)] $X_m$ is an isotrivial $j = \infty$ fibration with $2\mathrm{N}_2$ intermediate fibers each of which has a tree of pseudoelliptic surfaces attached to it by an $\mathrm{I}_n$ fiber;
			\item[(d)]$X_m$ is an isotrivial $j = \infty$ fibration with a minimal Weierstrass $\mathrm{N}_1$ fiber as well as an intermediate $\mathrm{N}_3$ fiber which has a tree of pseudoelliptic surfaces attached to it by an $\mathrm{I}_n^*$ fiber.  
			\item[(e)]$X_m$ is an isotrivial $j = \infty$ fibration with a single intermediate $\mathrm{N}_4$ fiber which has a tree of pseudoelliptic surfaces attached to it by an $\mathrm{I}_n$ fiber; or
		\end{enumerate}
		\noindent 
		
		Moreover, if we denote by $l$ the number of marked $\mathrm{N}_0$ fibers on $X_m$, then $l$ lies in the following range: (a): $4 \le l \le 16$, (b): $3 \le l \le 17$, (c): $2 \le l \le 18$, (d): $8 \le l \le 18$, (e): $13 \le l \le 19$.
	\end{theorem}
	
	\begin{proof} Suppose that Assumption \ref{ass:1} is satisfied. By Construction \ref{rmk:construction}, the Weierstrass model of the main component must be an slc isotrivial $j = \infty$ Weierstrass fibration with $\deg \mathscr{L} = 2$, which are classified by Proposition \ref{prop:k3typejinfty}. The lct of a type $\mathrm{N}_2$ fiber is $0$, so these do not contract to Weierstrass models, and any attached pseudoelliptic trees do not contract for nonzero weight. 
		
		In case $(c)$, the stability condition on the twisted stable maps limit implies that there must be at least one marked $\mathrm{N}_0$ to give that rational component of the base curve at least three special points. 
		
		The types of pseudofibers that are attached to intermediate $\mathrm{N}_1$ and $\mathrm{N}_2$ fibers respectively must have $j$-invariant $\infty$, so they are either type $\mathrm{I}_n$ or $\mathrm{I}_n^*$. The twisted model of an $\mathrm{N}_1$ fiber is a nonreduced rational curve, and so must have a stabilizer at the corresponding point of the twisted stable map. Therefore, it must be attached to an $\mathrm{I}_n^*$ fiber, which also has a nontrivial stabilizer at the corresponding point of the twisted stable map. Similarly, the twisted model of an $\mathrm{N}_2$ fiber is a nodal curve so it has no stabilizer, and therefore must be attached to an $\mathrm{I}_n$ fiber. 
		
		If Assumption \ref{ass:1} is not satisfied, then by Lemma \ref{lem:nonlc} we must have a K3 component $Y$ attached to $X_m$ along a fiber $F$ such that $Y$ is not the main component. This only happens if $Y$ has $<12$ singular fibers counted with multiplicity away from the fiber along which $Y$ is attached to $X_m$. In that case $F$ is a fiber of $Y$ with discriminant  $\geq 13$ so $F$ is either an $\mathrm{I}_n$ fiber for $n \geq 13$ or an $\mathrm{I}_n^*$ for $n \geq 7$. Consider a generic family of $24\mathrm{I}_1$ surfaces degenerating to this surface as in Section \ref{rmk:construction}.
		
		In the first case, we have that $n$ type $\mathrm{I}_1$ fibers collide to sprout out a trivial component with $n$ markings which becomes the main component when $Y$ flips into a pseudoelliptic. Since $X_m$ has only $\mathrm{N}_0$ fibers away from where $Y$ is attached and the degree of $\mathscr{L}$ must be $2$, then the attaching fiber is an $\mathrm{N}_4$ by Proposition \ref{prop:jinftyfibers}. This gives us (e). In the second case, let us denote by $Y'$ and $X_m'$ the strict transforms of $Y$ and $X_m$ in the twisted stable maps replacement of the limit of the family. Then $Y'$ and $X_m'$ are glued along twisted $\mathrm{I}_n^*/\mathrm{N}_1$ fibers since the order of the stabilizer is $2$. Then the base curve of the $X_m'$ component must have at least one more point with a stabilizer since any finite cover of $\mb{P}^1$ is ramified in at least two points. On the other hand, the stabilizer of any $j$-invariant $\infty$ curve is $\mu_2$ so these other points have to have stabilizers of order $2$. Now when the component $Y'$ flips into the pseudoelliptic surface $Y$, then the twisted fiber on $X_m'$ it is attached must flip into a \emph{non semi-log canonical} intermediate fiber since Assumption \ref{ass:1} fails. Thus it must be an $\mathrm{N}_k$ fiber for $k \geq 3$. The other twisted fibers on $X_m'$ must flip into intermediate models of $\mathrm{N}_k$ fibers for $k \geq 1$ since the $\mathrm{N}_0$ fiber has no stabilizers. Since the degree of $\mathscr{L}$ for the main component $X_m$ must be $2$, then by Proposition \ref{prop:jinftyfibers}, the fiber along which $Y$ is attached must be an $\mathrm{N}_3$ and the only other non-stable fiber is a single $\mathrm{N}_1$. This gives us case (d).  
		
		To obtain the number of markings, we may apply Proposition \ref{prop:Nkdiscriminant} to see that each $\mathrm{N}_k$ fiber is marked with multiplicity at least $k + 1$. This gives an upper bound on $n$. For the lower bound, we look at the largest number of marked $\mathrm{I}_1$ fibers that can appear on a component attached to the $\mathrm{N}_k$ fiber. For an $\mathrm{N}_1$ this is $5$ markings on an $5\mathrm{I}_1\mathrm{I}_1^*$ rational, for $\mathrm{N}_2$ this is $11$ markings on a $12\mathrm{I}_1$ (attached along one of the $\mathrm{I}_1$ fibers), for $\mathrm{N}_3$ this is $11$ markings on an $11\mathrm{I}_1\mathrm{I}_7^*$ elliptic K3, and for $\mathrm{N}_4$ this is $11$ markings on an $12\mathrm{I}_1\mathrm{I}_{13} $ elliptic K3. Here we have used that $X_m$ is the main component so all the other components must have undergone pseudoelliptic flips at a wall above $1/12 + \epsilon$. Finally, each $\mathrm{N}_1$ fiber is Weierstrass since there are at most $5$ markings on the component attached to it and so by Lemma \ref{lemma:N1}, these components contract to a point at a $\mathrm{W}_{\mathrm{III}}$ wall above $1/12 + \epsilon$.
	\end{proof} 
	
	\begin{remark} Each of the main components in Theorem \ref{thm:singlejinfty} that have only intermediate models of semi-log canonical cusps (e.g. cases (a), (b) and (c)) are $j = \infty$ limits of normal isotrivial elliptic surfaces. The $4\mathrm{N}_1$ surfaces are limits of $4\mathrm{I}_0^*$ isotrivial fibrations. Indeed, the locus in the moduli space of such surfaces is birational to $\mathbb{P}^1 \times \mathbb{P}^1$ where the first coordinate parametrizes the $j$-invariant of the fibration and the second coordinate parametrizes the configuration of the $4\mathrm{I}_0^*$ (respectively $4\mathrm{N}_1$) singular fibers. Similarly the $2\mathrm{N}_1 \mathrm{N}_2$ surface is the limit of the isotrivial $2\mathrm{I}_0^* \mathrm{L}$, surface and there is a rational curve of these in the moduli space. Finally the $2\mathrm{N}_2$ surface is the limit of isotrivial $2\mathrm{L}$ Weierstrass fibrations, but this family of $2\mathrm{L}$ surfaces does \emph{not} actually appear on this component of the moduli space as we describe below. 
		
		Note that in each of these cases, when the surface is isotrivial with $j \neq \infty$, all the markings must be concentrated on the special fibers. Indeed by Remark \ref{obs1}, there must be six markings concentrated at an $\mathrm{I}_0^*$ fiber and $12$ concentrated at a type $\mathrm{L}$ fiber. Therefore the isotrivial $j = \infty$ surface pairs that are limits of Weierstrass models as in the above paragraph must have six markings concentrated at each $\mathrm{N}_1$ fiber, and $12$ markings concentrated at each $\mathrm{N}_2$ fiber. In particular, they \emph{cannot} have any marked $\mathrm{N}_0$ fibers. Therefore, not all surface pairs with isotrivial $j = \infty$ main components are in the limit of the above locus of normal Weierstrass fibrations. In particular, since the type $2\mathrm{N}_2$ fibrations must have at least one marked $\mathrm{N}_0$ fiber by stability for twisted stable maps, we see that the $2\mathrm{L}$ family limiting to $2\mathrm{N}_2$ does not appear. 
	\end{remark}
	
	Finally we address the question of existence of each of the limits described above. 
	
	\begin{prop}\label{prop:exist1} Each of the cases described by Proposition \ref{prop:singlenormal12} and Theorem \ref{thm:singlejinfty} occurs in $\ktwelve$. 
	\end{prop}
	
	\begin{proof} We may take the Weierstrass model of the described main component. In each case it has a Weierstrass equation with $A,B$ of degree $8$ and $12$ respectively. Since the space of Weierstrass equations is irreducible, then there exists a family of $24\mathrm{I}_1$ elliptic $K3$ surfaces with this Weierstrass limit. By taking the stable replacement in $\ktwelve$ we must obtain stable limits as described.  \end{proof}

	\subsubsection{$X_m$ is reducible} Now we classify the broken elliptic surfaces in $\ktwelve$ where $X_m$ is the union of two irreducible surfaces.

	\begin{theorem}\label{thm:twomaincomp} Let $X$ be a surface parametrized by $\ktwelve$ with reducible main component, i.e. $X_m = Y_0 \cup Y_1$. Then one of the following holds.
		\begin{enumerate}
			\item $Y_i$ are rational elliptic surfaces glued along an $\mathrm{I}_0$ fiber. They are minimal Weierstrass surfaces away from possible intermediate Type $\mathrm{II}^*, \mathrm{III}^*$ and $\mathrm{IV}^*$ fibers along which Type $\mathrm{I}$ pseudoelliptic trees are attached.
			
			\item $Y_0$ is an elliptic K3 surface, $Y_1$ is a trivial $j$-invariant $\infty$ surface, and they are glued along $\mathrm{I}_{12}/\mathrm{N}_0$ fibers. There are $12$ marked $\mathrm{N}_0$ fibers on $Y_1$, and $Y_0$ has minimal Weierstrass fibers or minimal intermediate fibers Type $\mathrm{II}^*$, $\mathrm{III}^*$, or $\mathrm{IV}^*$ fibers where Type $\mathrm{I}$ pseudoelliptic trees are attached. 
			
			\item $Y_0$ is an elliptic K3 with an $\mathrm{I}_6^*$ fiber, $Y_1$ is an $2\mathrm{N}_1$ isotrivial $j$-invariant $\infty$ surface, and they are glued along twisted $\mathrm{I}_6^*/\mathrm{N}_1$ fibers. Away from the $\mathrm{I}_6^*$ fiber, $Y_0$ has minimal Weierstrass fibers or minimal intermediate $\mathrm{II}^*, \mathrm{III}^*$, and $\mathrm{IV}^*$ fibers where Type $\mathrm{I}$ pseudoelliptic trees are attached. There are $7 \le l \le 10$ marked $\mathrm{N}_0$ fibers on $Y_1$. 
			
			\item $Y_i$ are isotrivial $j$-invariant $\infty$ surfaces glued along $\mathrm{N}_0$ fibers. Each surface has a single intermediate $\mathrm{N}_2$ fiber with a Type $\mathrm{I}$ pseudoelliptic tree attached. There are $1 \le l_i \le 9$ marked $\mathrm{N}_0$ fibers on $Y_i$. 
			
			\item $Y_i$ are isotrivial $j$-invariant $\infty$ surfaces glued along $\mathrm{N}_0$ fibers. Each surface has $2$ minimal Weierstrass $\mathrm{N}_1$ fibers. There are $2 \le l_i \le 8$ marked $\mathrm{N}_0$ fibers on $Y_i$. 
			
			\item $Y_i$ are isotrivial $j$-invariant $\infty$ surfaces glued along $\mathrm{N}_0$ fibers. $Y_0$ has $2$ minimal Weierstrass fibers $\mathrm{N}_1$ fibers and $Y_1$ has one intermediate $\mathrm{N}_2$ fiber with a Type $\mathrm{I}$ pseudoelliptic tree attached. There are $2 \le l_0 \le 8$ marked $\mathrm{N}_0$ fibers on $Y_0$ and $1 \le l_1 \le 9$ marked $\mathrm{N}_0$ fibers on $Y_1$. 
			
		\end{enumerate}
		
	\end{theorem} 
	
	\begin{proof} We will proceed by taking the Weierstrass limit of the main component and using the classification in Section \ref{rmk:construction}  to determine what can be attached as the other main component. 
		
		First suppose that Assumption \ref{ass:1} does not hold for the fiber along which $Y_i$ are glued, so that after performing a pseudoelliptic flip of $Y_0$, the fiber on $Y_1$ is not the intermediate model of a semi-log canonical Weierstrass cusp. Then as in the proof of Theorem \ref{thm:singlejinfty}, $Y_0$ is a K3 component and $Y_1$ is an isotrivial $j$-invariant $\infty$ surface. Furthermore, they are either glued along twisted $\mathrm{I}_n/\mathrm{N}_0$ or $\mathrm{I}_n^*/\mathrm{N}_1$ fibers. Since they are the two main components, they must each have $12$ markings, so we conclude that $n = 12$ in the first case and $n = 6$ in the second case. Furthermore, as in the proof of Theorem \ref{thm:singlejinfty}, in the $\mathrm{I}_n^*/\mathrm{N}_1$ case, $Y_1$ must have another $\mathrm{N}_1$ fiber. This gives us cases (2) and (3) respectively.

		From now on we can suppose that Assumption \ref{ass:1} holds. Let us fix some notation. Denote the Weierstrass limit of $Y_i$ by $Y_i^0$ which must be one of the surfaces list in Section \ref{rmk:construction} if it is normal, or Proposition \ref{prop:k3typejinfty} if it is isotrivial $j$-invariant $\infty$. We will denote by $X^1 \to C^1$ a twisted stable maps model of the surface $X \to C$ in $\ktwelve$ and we will denote by $Y^1_i$ the unique component of $X^1$ dominating $Y_i$. Finally let $Z_i^1 \subset X^1$ the maximal connected union of connected components of $X^1$ that contains $Y_i^1$. Finally we will denote by $G$ the fiber along which $Y_0$ and $Y_1$ are glued, and by $G_i$ its model in the Weierstrass limit, which is obtained by flipping one of $Y_i$ and contracting the transform on $G$ on the other. See Figure \ref{fig:k31}.
		
		\begin{figure}[!h]
			\includegraphics[scale=.5]{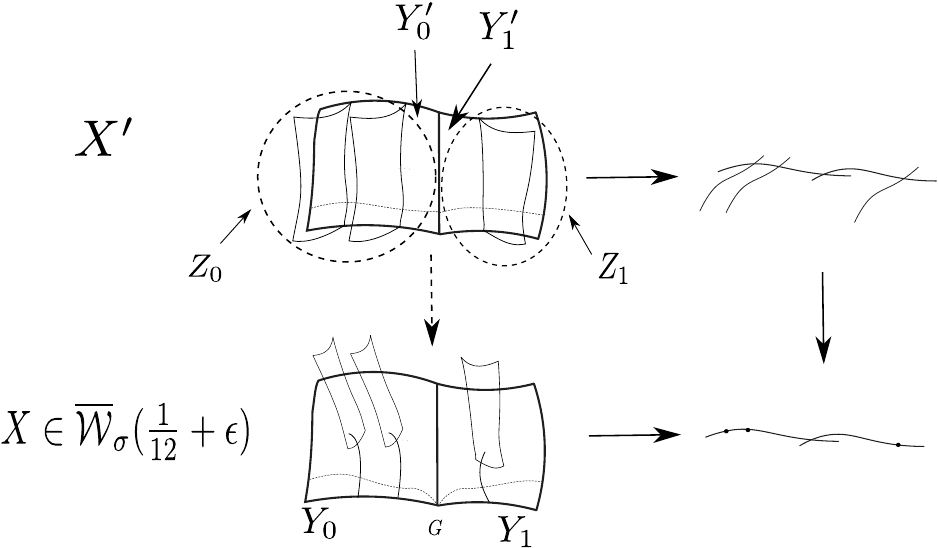}
			\caption{The circled components $Z_i$ represent the union of $Y_i^\prime$ along with the pseudoelliptic trees emanating from $Y_i^\prime$. The entire $Z_i$ component dominates $Y_i$, and the $Y_i^\prime$ components are the components containing the pseudoelliptics. }\label{fig:k31}
		\end{figure}

		Now since $Y_0$ and $Y_1$ satisfy Assumption \ref{ass:1} for the fiber along which they are glued, then by Lemma \ref{lem:nonlc} we must have $0 < -S_i^2 \le 1$ where $S_i$ are the sections of $Y_i$. Note that $S_0^2 \neq 0$, otherwise $Y_0$ would be \emph{trivial} and  so the degree the $j$-map on $Z_0$ would be $0$ and the degree of the $j$-map on $Z_1$ would be $24$, but then this would put us in situation (2).
		
		Suppose that $Y_0$ is normal. Then by Section \ref{rmk:construction}, $Y_0$ is a rational elliptic surface and $G_0$ is a type $\mathrm{L}$ cusp. Since the twisted model of a type $\mathrm{L}$ cusp is a stable curve, then $G$ is an $\mathrm{I}_n$ fiber. On the other hand, there must be $12$ markings on $Y_0$ away from $G$, and so $n = 0$ and $G$ is in fact a smooth fiber. Since $G$ is smooth, then $Y_1$ cannot be isotrivial $j$-invariant $\infty$ so it is normal and the same analysis applies to $Y_1$. Thus we obtain (1). 
		
		Next if $Y_0$ is not normal, then as above $Y_1$ is also non-normal. Now $Y_i$ satisfy Assumption \ref{ass:1} for the fiber $G$. We claim that they must also satisfy it for any pseudoelliptic trees away from $G$. Indeed suppose that $Y_0$ has an intermediate fiber $F$ not satisfying \ref{ass:1}. Then by Lemma \ref{lem:nonlc},  there must be an elliptic K3 attached to it. Every fiber of $Y_i$ is $\mathrm{N}_k$ for $k \le 2$ and we get  cases (4), (5), (6) by considering the various possible $\mathrm{N}_k$ fibers on a surface with $-S^2 \le 1$. 
		
		Since $\mathrm{N}_2$ fibers have $0$ lct, they must be intermediate with pseudoelliptic trees attached, while pseudoelliptic trees attached to an $\mathrm{N}_1$ fiber undergo type $\mathrm{W}_\mathrm{III}$ contractions at walls above $1/12 + \epsilon$ by Lemma \ref{lemma:N1} so $\mathrm{N}_1$ fibers are minimal Weierstrass. Finally, the number of markings is constrained by Proposition \ref{prop:Nkdiscriminant}, stability, and the fact that there are two main components so there must be $12$ total markings on each. \end{proof}
	
	\begin{prop}\label{prop:exist2} Each of the cases described in Theorem \ref{thm:twomaincomp} occurs in the boundary of $\ktwelve$. 
	\end{prop} 
	
	\begin{proof} Case (1) is the stable replacement in $\ktwelve$ of a Kulikov degeneration of Type II. Case (2) occurs when $12\mathrm{I}_1$ fibers collide to give an $\mathrm{I}_{12}$ fiber. Similarly, case (3) occurs when $12\mathrm{I}_1$ fibers collide to form an $\mathrm{I}_6^*$ fiber. Case (4) occurs when one starts with a degeneration of type (1) and take the limit as the $\mathrm{I}_1$ fibers approach the double locus $G$. Since marked $\mathrm{I}_1$ fibers from both $Y_0$ and $Y_1$ must fall into $G$ as the $j$-invariant of $G$ must match on both sides, then two isotrivial components appear so that each rational surface is attached to one of them along an $\mathrm{N}_0$ which leads to $\mathrm{N}_2$ fibers when the rational surfaces undergo a flip. Similarly, case (5) occurs when you start with a surface of type (1) and degenerate the two rational components into $2\mathrm{N}_1$ isotrivial $j$-invariant $\infty$ surfaces. Finally, for case (6), take a degeneration as in case (1) and then further degenerate $Y_0$ so that it is an isotrivial $2\mathrm{I}_0^*$ surface. Then the stable replacement of the limit as the $j$-invariant of the $2\mathrm{I}_0^*$ surface approaches $\infty$ is case (6).  \end{proof}

	\section{Surfaces in $\kepsilon$, the $24$-marked space at $a = \epsilon$}\label{sec:epsilon}
	
	In the previous section, we studied the wall crossings that occur in $\ska$ as we let the weight vary from 1 to $1/12 + \epsilon$, and we used this to classify the surfaces parametrized by the boundary of $\ska$ for $a = 1/12 + \epsilon$. The goal of this section is to explicitly study the wall crossings that occur as we reduce the weight further, from $a = 1/12 + \epsilon$ to $a = \epsilon$ for $0 < \epsilon \ll 1$. As a result, we determine the surfaces parametrized by the boundary of $\kepsilon$. The main results in this direction are Theorems \ref{thm:kepsilon} and Theorem \ref{thm:appear}. In Theorem \ref{thm:kepsilon} we describe the possible surfaces on the boundary, and in Theorem \ref{thm:appear} we use the theory of twisted stable maps (see Section \ref{sec:tsm}) to show that all such surfaces appear on the boundary. Finally, in Theorem \ref{thm:git2}, we describe a morphism from the coarse space of $\kepsilon$ to the GIT quotient $\oMG$. These three theorems together give a proof of Theorem \ref{thm:intro2}.\\
	
	We begin with the wall at $\frac{1}{12}$. 
	
	\begin{lemma}\label{lem:I*point} At $a = \frac{1}{12}$, there are Type $\mathrm{III}$ contractions of rational pseudoelliptic components attached by an $\mathrm{I}_0^*$ fiber. \end{lemma} 
	
	\begin{proof} An $\mathrm{I}_0^*$ must be attached along an other $\mathrm{I}_0^*$ by the stabilizer condition. Furthermore, an $\mathrm{I}_0^*$ rational surface has $6$ other markings with multiplicity. Putting this together with the description of the walls, we get a wall at $\frac{1}{2k} = \frac{1}{12}$ since $\frac{1}{2}$ is the lct of $\mathrm{I}_0^*$ (see Equation \ref{eq:a0} in Section \ref{sec:broken}). \end{proof} 
	
	\begin{lemma} At $a = \frac{1}{12}$ the trivial component $Y_1$ in case (2) of Theorem \ref{thm:twomaincomp} contracts onto the $\mathrm{I}_{12}$ fiber it is attached to. \end{lemma}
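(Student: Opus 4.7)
The plan is to verify that at $a = \tfrac{1}{12}$ the relative log canonical divisor becomes numerically trivial on a covering family of rational curves on $Y_1$, so that the $\mathrm{W}_{\mathrm{III}}$ log canonical contraction collapses $Y_1$ onto the gluing locus.

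First I would use the structure established in Section \ref{sec:jinfty} to identify $Y_1$ explicitly. Since $Y_1$ is a trivial isotrivial $j=\infty$ surface with only $\mathrm{N}_0$ fibers, Proposition \ref{prop:jinftyfibers} gives $\deg \mathscr{L} = 0$, hence $-S^2 = 0$, and $Y_1 \cong N \times \mathbb{P}^1$ where $N$ is a nodal cubic. Under this identification the elliptic fibration $f : Y_1 \to \mathbb{P}^1$ is the second projection, the section is $S = \{p\}\times \mathbb{P}^1$ for some smooth point $p \in N$, each marked $\mathrm{N}_0$ fiber is of the form $F_i = N \times \{t_i\}$, and the double locus $\mathrm{N}_0$ fiber $G$ (which is glued to the $\mathrm{I}_{12}$ fiber on $Y_0$) is likewise of the form $N \times \{t_\infty\}$.

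Next, I would test $K_{Y_1} + S + aF_a + G$ against the covering family of sections of the first projection $\pi_1 : Y_1 \to N$: for generic $q \in N$, let $T_q = \{q\}\times \mathbb{P}^1$. These curves are all numerically equivalent and sweep out $Y_1$. Using $K_{Y_1} = -2F$ (pulled back from $\mathbb{P}^1$), together with the fact that $T_q$ is a section of $f$ disjoint from $S$ for generic $q$, the intersection numbers are
\[
K_{Y_1}\cdot T_q = -2,\quad S\cdot T_q = 0,\quad F_i \cdot T_q = 1,\quad G \cdot T_q = 1,
\]
so $(K_{Y_1} + S + aF_a + G)\cdot T_q = -2 + 12a + 1 = 12a - 1$. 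This vanishes precisely at $a = \tfrac{1}{12}$, is positive above, and becomes negative below, so we have located a wall of Definition \ref{def:walltypes}.

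Finally, since at $a = \tfrac{1}{12}$ the log canonical divisor is trivial on a covering family of rational curves, the log canonical morphism factors through the contraction of the pencil $|T_q|$, i.e., through $\pi_1 : Y_1 = N \times \mathbb{P}^1 \to N$. This collapses the entire component $Y_1$ onto its gluing curve; on the $Y_0$ side this gluing curve is identified with the $\mathrm{I}_{12}$ fiber as in case (2) of Theorem \ref{thm:twomaincomp}. Because the contraction eliminates an entire rational pseudofibered component rather than flipping a section or transitioning a single fiber type, it is indeed a $\mathrm{W}_{\mathrm{III}}$ wall in the sense of Definition \ref{def:walltypes}, completing the proof. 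The only place where care is needed is the identification $Y_1 \cong N \times \mathbb{P}^1$ and the verification that the covering curves $T_q$ are disjoint from the section $S$ for generic $q$; both follow from triviality of the fibration.
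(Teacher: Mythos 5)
Your argument is correct, but it takes a genuinely different route from the paper. The paper's proof is a one-line appeal to the Hassett wall-crossing machinery (Theorem \ref{thm:forgetful}): the base curve component under $Y_1$ carries $12$ points of weight $a$ plus a node, so it destabilizes in $\overline{\calM}_{0,\calA}$ exactly at $12a = 1$ and contracts to a point; since $Y_1$ is a trivial fibration, the surface component cannot flip to a pseudoelliptic and instead contracts onto the fiber it is glued along. You instead work directly on the surface: identifying $Y_1 \cong N \times \mathbb{P}^1$ ($N$ a nodal cubic, which is legitimate since $\deg \mathscr{L} = 0$ forces constant Weierstrass data, and matches the description in Section \ref{sec:explicitboundary}), restricting the log canonical divisor by adjunction to get $K_{Y_1} + S + aF_a + G$, and computing its degree $12a - 1$ on the horizontal ruling $T_q$. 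This is a sound and self-contained MMP computation; note that the section $S$ is numerically a ruling curve as well, so the whole ruling (section included) becomes lc-trivial simultaneously at $a = \tfrac{1}{12}$ and the lc morphism is exactly $\pi_1 : N \times \mathbb{P}^1 \to N$, landing on the gluing curve identified with the $\mathrm{I}_{12}$ fiber. What the paper's route buys is brevity and uniformity with the other walls; what yours buys is an explicit verification, independent of the Hassett-space dictionary, of which curves are contracted. One small correction: your closing classification of this wall as Type $\mathrm{W}_{\mathrm{III}}$ is at odds with Definition \ref{def:walltypes} --- $Y_1$ is not a pseudoelliptic and is contracted onto a curve rather than a point, and since the section of $Y_1$ is contracted at the Hassett wall $12a = 1$ this is a Type $\mathrm{W}_{\mathrm{II}}$ phenomenon in the paper's terminology; this mislabel does not affect the validity of the lemma itself.
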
 
	
	\begin{proof} The component of the base curve lying under $Y_1$ contracts to a point but since $Y_1$ is trivial, it contracts onto a fiber. \end{proof} 
	
	\begin{lemma} Let $X$ be a surface parametrized by $\ktwelve$ from Theorem \ref{thm:twomaincomp}(3). Then the stable replacement for coefficients $\frac{1}{12} - \epsilon$ is an irreducible pseudoelliptic K3 surface with an $\mathrm{I}_6^*$ fiber. \end{lemma}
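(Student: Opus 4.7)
The plan is to identify the walls crossed at $a = \frac{1}{12}$ for the configuration of Theorem \ref{thm:twomaincomp}(3) and describe the resulting log canonical model. First I would compute the Hassett stability of the base component $C_1 \cong \mathbb{P}^1$ of $Y_1$: after the Type $\wiii$ contractions of Lemma \ref{lemma:N1} have concentrated the $12 - l$ markings at the minimal Weierstrass $\mathrm{N}_1$ fiber, the base $C_1$ carries total marking weight $12a$ together with the attaching node of weight $1$. By \cite{has}, $C_1$ is $\calA$-stable if and only if $12a > 1$, so $a = \frac{1}{12}$ is a wall at which $C_1$ collapses to the attaching node. Combined with Theorem \ref{thm:forgetful} and $S_1^2 = -1$ (Proposition \ref{prop:jinftyfibers}), this produces a $\wii$ contraction of the section $S_1$ followed by a $\wiii$ contraction of the resulting rational pseudoelliptic $Z_1$ onto its $\mathrm{N}_1$ double-locus fiber (via \cite[Proposition 7.4]{calculations}; see Remark \ref{rmk:contraction}).

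Next I would verify that the section $S_0$ of $Y_0$ also contracts at the same wall. After $Y_1$ is absorbed into the $\mathrm{N}_1/\mathrm{I}_6^*$ position, its $12$ markings become concentrated at the $\mathrm{I}_6^*$ fiber of $Y_0$, contributing coincident weight $12a$. Using $S_0^2 = -\deg \mathscr{L} = -2$ and $K_{Y_0} \equiv 0$ (from $K_{Y_0} + D_0 \equiv 0$ once $D_0$ is absorbed), one computes
\[
\bigl(S_0 + 12a\, F_{\mathrm{I}_6^*} + \sum_{i \in Y_0} a F_i\bigr) \cdot S_0 = -2 + 12a + 12a = -2 + 24a,
\]
which vanishes at $a = \frac{1}{12}$ and becomes negative for $a < \frac{1}{12}$, triggering a $\wii$ contraction of $S_0$. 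The resulting log canonical model is therefore an irreducible pseudoelliptic K3 carrying an $\mathrm{I}_6^*$ fiber, and by Proposition \ref{obvs:dubois} together with the K3 condition $\omega_X \cong \calO_X$ on the central fiber of the degeneration, the resulting pair remains slc with trivial dualizing sheaf.

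The main obstacle I anticipate is the bookkeeping needed to coordinate the Hassett collapse of $C_1$, the $\wii$ section contractions of both $S_0$ and $S_1$, and the $\wiii$ absorption of $Z_1$, so that all of them happen simultaneously at $a = \frac{1}{12}$ and yield an irreducible output. This reduces to comparing the wall arithmetic above and then invoking Theorem \ref{thm:main} to propagate the birational transformations to the universal family over the relevant moduli neighborhood.
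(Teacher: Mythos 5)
Your overall strategy (locate the $a=\tfrac{1}{12}$ wall where the Hassett base collapses, contract both sections, absorb the isotrivial component, leaving the pseudoelliptic K3 with its $\mathrm{I}_6^*$) is the same as the paper's, and your Hassett arithmetic and the computation $-2+24a$ for $S_0$ are consistent with the paper's observation that the base curve contracts to a point at $\tfrac{1}{12}$. But the crux of the lemma is \emph{why and how the second main component $Y_1$ disappears at exactly this wall}, and that is the step you assert rather than prove, with citations that do not cover it. Lemma \ref{lemma:N1} produces walls at $\tfrac14,\tfrac16,\tfrac18,\tfrac1{10}$ for components carrying few markings attached along \emph{intermediate} $\mathrm{N}_1$ fibers; here $Y_1$ carries $12$ markings and is glued along a \emph{twisted} $\mathrm{N}_1/\mathrm{I}_6^*$ double locus, so that lemma gives no contraction at $\tfrac{1}{12}$, and \cite[Proposition 7.4]{calculations} (Remark \ref{rmk:contraction}) only says that a contraction to a curve or a point \emph{may} occur--deciding which, and at what coefficient, requires an intersection computation of the kind carried out in Examples \ref{ex:1} and \ref{ex:2} or the wall arithmetic of Lemma \ref{lem:I*point}.

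Moreover, the specific mechanism you propose--contracting the pseudoelliptic $Z_1$ \emph{onto its $\mathrm{N}_1$ double-locus fiber}--is the behavior the paper reserves for case (2) of Theorem \ref{thm:twomaincomp}, where the component is a \emph{trivial} $j$-invariant $\infty$ surface ($\deg\mathscr{L}=0$) and therefore literally projects onto a fiber (see the preceding lemma: ``since $Y_1$ is trivial, it contracts onto a fiber''). In case (3) one has $\deg\mathscr{L}(Y_1)=1$ and $S_1^2=-1$, so $Y_1$ is not ruled over the double locus and no such contraction onto the curve exists; compare Example \ref{ex:2}, where for the $2\mathrm{N}_1$ surface the analogous curve class has negative self-intersection and undergoes a flip rather than a ruling contraction. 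The paper's proof instead contracts $S_1$ so that $Y_1$ flips to a Type I pseudoelliptic attached along the intermediate $\mathrm{I}_6^*$ of $Y_0$, then contracts $S_0$, and finally contracts the pseudoelliptic $Y_1$ to a point as in Lemma \ref{lem:I*point}; it is this contraction to a point that leaves the $\mathrm{I}_6^*$ as a (Weierstrass) pseudofiber, consistent with Corollary \ref{cor:stablereplacement}. Your $S_0$ computation, which presupposes that the $12$ markings of $Y_1$ have already been concentrated on the $\mathrm{I}_6^*$ fiber, is therefore downstream of the missing step. To repair the argument you would need to replace the appeal to Lemma \ref{lemma:N1} by the flip-and-contract analysis above (or an explicit log-canonical intersection computation on $Y_1$ as in Example \ref{ex:2}) showing that $Y_1$ is removed at, and not below, the $\tfrac{1}{12}$ wall.
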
 
	\begin{proof} $X$ has main component $X_m = Y_0 \cup Y_1$ consisting of an elliptic K3 with a twisted $\mathrm{I}_6^*$ fiber glued to an isotrivial $j$-invariant $\infty$ surface along a twisted $\mathrm{N}_1$ fiber. Each surface has $12$ markings. At coefficient $\frac{1}{12} - \epsilon$, both section components are contracted by an extremal contraction. We first perform the extremal contraction of the section of $Y_1$ which results in a flip of $Y_1$ to a pseudoelliptic surface. Then the section of $Y_0$ contracts to form a pseudoelliptic with the pseudoelliptic model of $Y_1$ glued along an $\mathrm{I}_6^*$ pseudofiber. Finally, $Y_1$ contracts onto a point as in Lemma \ref{lem:I*point}.  \end{proof} 
	
	Putting the above together with the observation that the Hassett space becomes a point at $\frac{1}{12}$ so the base curves all contract to a point, we get the following:
	
	\begin{theorem}\label{thm:ktwelve}  Let $X$ be a surface parametrized by $\ktwelvee$.
		
		\begin{enumerate}
			\item If $X$ has a single main component, then $X_m$ is the pseudoelliptic surface associated to an elliptic surface as in Proposition \ref{prop:singlenormal12} and Theorem \ref{thm:singlejinfty} with an $A_1$ singularity where the section contracted. Any type $\mathrm{II}, \mathrm{III}, \mathrm{IV}$, $\mathrm{N}_1$ and $\mathrm{I}_k^*$ for $k \le 5$ pseudofibers of $X_m$ are Weierstrass and any $\mathrm{I}_n$ fibers satisfy $n \le 12$. There are pseudoelliptic trees sprouting off of intermediate Type $\mathrm{II}^*, \mathrm{III}^*, \mathrm{IV}^*$ and $\mathrm{N}_k$ for $k \geq 2$ fibers as before. 
			
			\item If $X$ has two main components, then $X_m$ is a union along a twisted pseudofiber of the surfaces appearing in Theorem \ref{thm:twomaincomp}, parts (1), (4), (5) and (6). Any type $\mathrm{II}, \mathrm{III}, \mathrm{IV}, \mathrm{N}_1$ and $\mathrm{I}_k^*$ for $k \le 5$ pseudofibers are Weierstrass. There are pseudoelliptic trees sprouting off of intermediate $\mathrm{II}^*, \mathrm{III}^*, \mathrm{IV}^*$ and $\mathrm{N}_2$ fibers as before. 
		\end{enumerate}
		
	\end{theorem}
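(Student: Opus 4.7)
The plan is to apply the wall-crossing formalism of Theorem \ref{thm:main} to transport the classification of surfaces in $\ktwelve$ (Proposition \ref{prop:singlenormal12}, Theorem \ref{thm:singlejinfty}, and Theorem \ref{thm:twomaincomp}) across the unique wall at $a = \frac{1}{12}$ into $\ktwelvee$. A preliminary step is to confirm that this is indeed the only wall in the interval $(\frac{1}{12}-\epsilon,\frac{1}{12}+\epsilon)$: the Hassett type $\wii$ walls occur at $a = 1/k$ for integers $k$, of which only $k = 12$ lies in the range; the $\wiii$ walls enumerated in the preceding lemmas of Section 7 (including Lemma \ref{lemma:N1} and the lemmas immediately preceding Theorem \ref{thm:ktwelve}) lie strictly above $\frac{1}{12}$ except for those occurring at $\frac{1}{12}$ itself; and the $\wi$ walls arising from log canonical fiber-type transitions are governed by the table (\ref{eq:a0}), none of whose entries meet the open interval.

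At the $\frac{1}{12}$ wall two types of events occur simultaneously. First, the base-curve Hassett moduli transitions: a smooth $\mathbb{P}^1$ carrying all $24$ markings of weight $\frac{1}{12}$ has $K_{\mathbb{P}^1} + \sum a_i p_i$ of degree zero, so such a base component collapses in $\overline{\calM}_{0,\calA}$. Via the forgetful morphism of Theorem \ref{thm:forgetful} and the accompanying $\wii$ transformation of Theorem \ref{thm:main}(3), the section of the associated elliptic component is contracted to yield a pseudoelliptic. Because the section of an elliptic K3 component has self-intersection $-\deg\mathscr{L} = -2$, this extremal contraction produces an $A_1$ singularity at the image point. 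Second, the $\wiii$ contractions specified in the three lemmas preceding the theorem occur: rational pseudoelliptic trees attached along $\mathrm{I}_0^*$ pseudofibers contract (Lemma \ref{lem:I*point}), the trivial component of case (2) of Theorem \ref{thm:twomaincomp} contracts onto its $\mathrm{I}_{12}$ fiber, and case (3) of Theorem \ref{thm:twomaincomp} reduces to an irreducible pseudoelliptic K3 with an $\mathrm{I}_6^*$ fiber.

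With these inputs I would derive the classification case by case. For part (1), starting from the main components of Proposition \ref{prop:singlenormal12} or Theorem \ref{thm:singlejinfty}, the section contracts to yield $X_m$ with an $A_1$ singularity. The surviving pseudoelliptic trees are exactly those whose $\wiii$ contraction wall lies strictly below $\frac{1}{12}$, namely the trees attached along intermediate $\mathrm{II}^*, \mathrm{III}^*, \mathrm{IV}^*$ and $\mathrm{N}_k$ for $k \geq 2$ fibers. Conversely $\mathrm{II}, \mathrm{III}, \mathrm{IV}$ and $\mathrm{N}_1$ pseudofibers are Weierstrass by Corollary \ref{cor:234:weierstrass} and Lemma \ref{lemma:N1}; the $\mathrm{I}_k^*$ pseudofibers for $1 \leq k \leq 4$ are Weierstrass by Lemma \ref{lemma:N1}, for $k=5$ by the analogous count of one remaining marking with lct $\frac{1}{2}$ producing a wall at $a = \frac{1}{2}$, and for $k=0$ by Lemma \ref{lem:I*point}. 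The bound $n \leq 12$ on $\mathrm{I}_n$ pseudofibers follows from the Hassett coincidence condition: at most $12$ markings of weight $\frac{1}{12}-\epsilon$ may coincide. For part (2), the above lemmas show that cases (2) and (3) of Theorem \ref{thm:twomaincomp} collapse to a single main component and are therefore subsumed under part (1), while cases (1), (4), (5), (6) retain both main components because each side of the nodal base curve carries exactly $12$ markings separated by a genuine node of $C$, so no further base-curve contraction is triggered at this wall.

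The main obstacle is careful bookkeeping. One must verify that the enumerated list of $\wi, \wii, \wiii$ walls is exhaustive on the interval $(\frac{1}{12}-\epsilon,\frac{1}{12}+\epsilon)$, and that every pseudoelliptic tree permitted by the theorem statement really does survive past $\frac{1}{12}$, which in turn requires identifying its contraction wall via the classification of rational elliptic and isotrivial $j$-invariant $\infty$ surfaces carrying the relevant attaching fiber, together with the lct table (\ref{eq:a0}). A secondary delicate point is confirming for cases (1), (4), (5), (6) of Theorem \ref{thm:twomaincomp} that no $\wiii$ contraction at $\frac{1}{12}$ destroys the gluing fiber between the two main components; this amounts to observing that the gluing fibers in those cases ($\mathrm{I}_0$, twisted $\mathrm{N}_0$, twisted $\mathrm{N}_1$ etc.) either admit no pseudoelliptic attachment or have their attached trees contract only at walls strictly below $\frac{1}{12}$.
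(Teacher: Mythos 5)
Your overall strategy---transporting the classification at $\frac{1}{12}+\epsilon$ across the single wall at $\frac{1}{12}$ using the three preceding lemmas together with the observation about the Hassett space---is exactly the paper's argument (the paper gives only this sketch), and your $A_1$ computation via $S^2=-\deg\mathscr{L}=-2$ for part (1) is fine. However, two of your steps do not hold up as written.

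First, your treatment of part (2) contradicts the very statement being proved. You assert that in cases (1), (4), (5), (6) of Theorem \ref{thm:twomaincomp} ``no further base-curve contraction is triggered at this wall'' because each side of the node carries exactly $12$ markings. But a base component with $12$ markings of weight $a$ and one node has log degree $12a+1-2=12a-1$, which is non-positive exactly when $a\le\frac{1}{12}$; so at $\frac{1}{12}-\epsilon$ both components of the base are unstable, the base contracts to a point, and both sections are contracted (compare the lemma on case (3), where the paper performs precisely these two section contractions). This is the content of the paper's observation that the Hassett space becomes a point at $\frac{1}{12}$, and it is what makes $X_m$ in part (2) a union of \emph{pseudoelliptic} components glued along a twisted \emph{pseudofiber}; your version would leave two elliptically fibered components with sections, which is not the assertion of the theorem.

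Second, your justification that $\mathrm{I}_k^*$ pseudofibers of $X_m$ are Weierstrass for $k\le 5$ misapplies Lemma \ref{lemma:N1}. That lemma (and your ``$6-k$ markings, wall at $\frac{1}{2(6-k)}$'' count, including your $k=5$ extension) concerns rational pseudoelliptics \emph{containing} an $\mathrm{I}_k^*$ fiber that are attached along intermediate $\mathrm{N}_1$ fibers of $X_m$; it shows that the $\mathrm{N}_1$ fibers of $X_m$ become Weierstrass, and says nothing about trees attached along intermediate $\mathrm{I}_k^*$ fibers \emph{of} $X_m$, which is the configuration the statement is about and which is allowed at $\frac{1}{12}+\epsilon$ by Proposition \ref{prop:singlenormal12}. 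For such a tree the marking count is governed by Remark \ref{obs1} and Proposition \ref{prop:Nkdiscriminant} through the discriminant multiplicity $6+k$ of the resulting fiber, not by $6-k$, so the walls you quote are not the relevant ones; a genuine argument in the spirit of Lemma \ref{lem:I*point} (which handles only $k=0$) or of the explicit intersection computations in Examples \ref{ex:1} and \ref{ex:2} is needed here. Relatedly, the bound $n\le 12$ on $\mathrm{I}_n$ fibers should not be derived from a ``coincidence condition'' at weight $\frac{1}{12}-\epsilon$ (there is no such Hassett space, and at $\frac{1}{12}+\epsilon$ at most $11$ markings coincide by Lemma \ref{lem:stablecurves}); the borderline case $n=12$ comes from the second lemma, the collapse of the trivial component in case (2) of Theorem \ref{thm:twomaincomp} onto its $\mathrm{I}_{12}$ fiber, which your write-up never invokes for this purpose.
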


	\begin{lemma} There are Type $\mathrm{III}$ walls at
		$
		a = \frac{1}{60}, \frac{1}{36}, \text{ and } \frac{1}{24}
		$
		where rational pseudoelliptic surfaces attached along intermediate type $\mathrm{II}^*$, $\mathrm{III}^*$ and $\mathrm{IV}^*$ fibers respectively contract to a point. \end{lemma}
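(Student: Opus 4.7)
The plan is to adapt the calculation used for the analogous $\mathrm{W}_{\mathrm{III}}$ wall lemmas earlier in this section (the walls at $a = 5/12, 3/12, 2/12$ for intermediate $\mathrm{II}, \mathrm{III}, \mathrm{IV}$ fibers, and Lemma~\ref{lemma:N1} for $\mathrm{N}_1$-attached pseudoelliptics). In each of these, the wall at which the rational pseudoelliptic contracts to a point is given by the general formula $a = a_0/n$, where $a_0$ is the intermediate-to-Weierstrass transition coefficient of the attaching fiber (tabulated in Equation~(\ref{eq:a0})) and $n$ is the total multiplicity of marked fibers supported on the pseudoelliptic away from the double locus; this formula is a direct consequence of \cite[Theorem 6.3]{master}.

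First I would identify the combinatorial structure of each pseudoelliptic. By the balancing condition on twisted pairs (Proposition~\ref{prop:tsm}(1)), a pseudoelliptic attached along an intermediate $\mathrm{II}^*$ (resp.\ $\mathrm{III}^*$, $\mathrm{IV}^*$) fiber on the main component must itself carry a $\mathrm{II}$ (resp.\ $\mathrm{III}$, $\mathrm{IV}$) fiber as the corresponding twisted half. Since a rational elliptic surface has total discriminant degree $12$, and $\mathrm{II}, \mathrm{III}, \mathrm{IV}$ contribute $2, 3, 4$ to the discriminant respectively, the number of other marked fibers (counted with multiplicity) on the rational pseudoelliptic is $n = 10, 9, 8$ respectively.

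Next I read off the relevant $a_0$ values from Equation~(\ref{eq:a0}): $a_0 = 1/6$ for $\mathrm{II}^*$, $a_0 = 1/4$ for $\mathrm{III}^*$, and $a_0 = 1/3$ for $\mathrm{IV}^*$. Plugging into the formula $a = a_0/n$ gives the three walls
\[
\frac{1/6}{10} = \frac{1}{60}, \qquad \frac{1/4}{9} = \frac{1}{36}, \qquad \frac{1/3}{8} = \frac{1}{24},
\]
exactly as claimed. Finally, I would confirm that the contraction is a contraction to a point rather than to a curve: the pseudoelliptic has base $\mathbb{P}^1$ with only one point in the double locus, so by Remark~\ref{rmk:contraction} and \cite[Proposition 7.4]{calculations}, the only $\mathrm{W}_{\mathrm{III}}$ option available is contraction to a point.

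The main obstacle, if any, is not the arithmetic above but the justification of the wall formula $a_0/n$ itself, i.e. the MMP computation showing that the log canonical divisor $K_Z + aF + D$ on the rational pseudoelliptic $Z$ (with $D$ the attaching fiber carrying coefficient $1$ and $F$ the marked fibers of total multiplicity $n$ carrying coefficient $a$) becomes non-positive on $Z$ precisely at $a = a_0/n$. This is already established by the proof of \cite[Theorem 6.3]{master}, so the lemma reduces to the discriminant bookkeeping performed above.
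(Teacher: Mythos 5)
Your proposal is correct and follows essentially the same route as the paper: cite \cite[Theorem 6.3]{master} for the wall formula, observe that a rational pseudoelliptic attached along a $\mathrm{II}^*$, $\mathrm{III}^*$, or $\mathrm{IV}^*$ fiber carries the dual $\mathrm{II}$, $\mathrm{III}$, or $\mathrm{IV}$ fiber and hence $10$, $9$, or $8$ other marked fibers by the discriminant count, and divide the $a_0$ values from Equation~(\ref{eq:a0}) accordingly. The paper's proof is just a terser version of this same bookkeeping, so no changes are needed.
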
 
	
	\begin{proof} This follows from \cite[Theorem 6.3]{master} as well as the observation that that a rational elliptic surface attached to a type $\mathrm{II}^*, \mathrm{III}^*$ or $\mathrm{IV}^*$ fiber must have a $\mathrm{II}, \mathrm{III}$, or $\mathrm{IV}$ fiber respectively and so it has $10,9$, or $8$ other marked fibers counted with multiplicity. \end{proof}
	
	Next we study some examples of the transformations that occur for small coefficient.  
	
	\begin{example}\label{ex:1}(See Figure \ref{fig:k32})
		Suppose $X_{\eta}$ is a smooth elliptic K3 surface with 24 $(\mathrm{I}_1)$ fibers, and suppose it appears as the general fiber of a family $(f : \mathscr{X} \to B, \mathscr{S} + \mathscr{F}_a)$ with limit as in Theorem \ref{thm:singlejinfty} case (d). In particular, this is a stable limit for $a = \frac{1}{12} + \epsilon$ and $\mathscr{F}$ consisting of the $24\mathrm{I}_1$ fibers on the generic surface $X_\eta$. We will compute the stable limit of this family for $a < \frac{1}{12}$. We will denote by $X^{a}$ the $a$-stable special fiber of $\mathscr{X} \to B$. 
		
		We begin with the twisted stable maps limit $X^1\to C^1$. It consists of a union $Y_{0}^1 \cup Y_{1}^1$ where $Y_{0}^1$ is an elliptic K3 and $Y_{1}^1$ is a trivial $j$-invariant $\infty$ surface with $n$ marked fibers glued along an $\mathrm{I}_n$ fiber of $Y_{0}^1$ where $n > 12$. At $a = \frac{1}{24 - n}$, the component $Y_{0}^1$ undergoes a pseudoelliptic flip to obtain the model in Theorem \ref{thm:singlejinfty}  (d), i.e. $Y_0^a$ is a pseudoelliptic K3 glued along an intermediate $\mathrm{N}_4$ fiber $A^a \cup G^a$ of $Y_{1}^a$. Next, for $a \le \frac{1}{12}$, the section of $Y_1^a$ contracts onto an $A_1$ singularity so that $X^a$ consists of a pseudoelliptic isotrivial $j$-invariant $\infty$ surface with an intermediate $\mathrm{N}_4$ pseudofiber and a pseudoelliptic K3 sprouting off it. To continue the MMP on this $1$-parameter family and compute the stable limit for smaller $a$, we need to compute
		$
		(K_{\mathscr{X}^a} + \mathscr{F}^a).A^a$ and $(K_{\mathscr{X}^a} + \mathscr{F}^a).G^a.
		$
		
		We can restrict the log canonical divisor to the component $Y_1^a$ to obtain $K_{Y_1^a} + G + (24 - n)aA^a + naf$ where $f$ is a pseudofiber class. Pulling back to the blowup of the section $\mu : Y_1^{b} \to Y_1^a$ where $b = \frac{1}{12} + \epsilon$, we get
		$$
		\mu^*(K_{Y_1^a} + G + (24 - n)aA^a + naf^a) = K_{Y_1^b} + G^b + (24 - n)aA^b + naf^b + 12aS_1^b.
		$$
		Here $S_1^b$ is the section which is a $-2$ curve and $f^b$ is a fiber class. Now $A^b$ is the curve obtained by flipping the section $S_0$ of $Y_0^1$. Using the local structure of the flip (see e.g. \cite[Section 7.1]{ln}), we compute that $(A^b)^2 = -\frac{1}{2}$, $A^b.G^b = \frac{1}{2}$ and $(G^b)^2 = -\frac{1}{2}$. Similarly, using push-pull for the contraction $\rho : Y_1^b \to Y_1^1$ onto the twisted model of $Y_1^1$ we get that $K_{Y_1^b} = -2f^b + 2A^b$. Putting all these together and using push-pull for $\mu$ we get that 
		\begin{align*}
			(K_{Y_1^a} + G + (24 - n)aA^a + naf).A^a &= (K_{Y_1^b} + G^b + (24 - n)aA^b + naf^b + 12aS_1^b).A^b = \frac{na}{2} - \frac{1}{2} \\
			(K_{Y_1^a} + G + (24 - n)aA^a + naf).B^a &= (K_{Y_1^b} + G^b + (24 - n)aA^b + naf^b + 12aS_1^b).G^b \\ & = \frac{1}{2} + (24 - n)\frac{a}{2}.
		\end{align*}
		In particular, for $a < \frac{1}{n}$, there is an extremal contraction of the curve class of $A^a$ in $\mathscr{X}^a$. On the other hand, since $(A^b)^2 = -\frac{1}{2}$ and $\mu$ is the contraction of a $-2$ curve which intersects $A^b$ transversely, we have $(A^a)^2 = 0$ so this curve class rules $Y_1^b$ over $G^b$ and the extremal contraction for $a < \frac{1}{n}$ contracts $X^a$ onto $Y_0^a$, the pseudoelliptic K3. 
		
	\end{example}
	
	\begin{remark} We note that in the above example $n \leq 19$ by e.g.  \cite{shioda}. \end{remark}
	
	\begin{figure}[!h]
		\includegraphics[scale=.8]{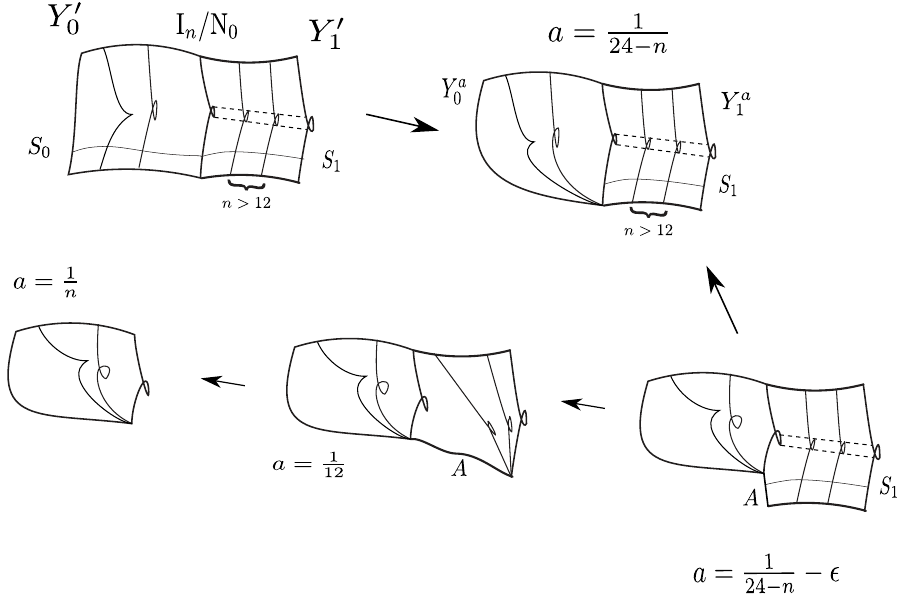}
		\caption{Illustration of Example \ref{ex:1}}\label{fig:k32}
	\end{figure}

	\begin{example}\label{ex:2}(See Figure \ref{fig:k33})
		Suppose $X_{\eta}$ as above is a smooth elliptic K3 surface with 24 $(\mathrm{I}_1)$ fibers which appears as the general fiber of a family $(f : \mathscr{X} \to B, \mathscr{S} + \mathscr{F}_a)$ with limit as in Theorem \ref{thm:singlejinfty} (e). We compute the stable limit for small $a$ as above and we keep the same notation. 
		
		The twisted stable maps limit $X^1\to C^1$ consists of a union $Y_{0}^1 \cup Y_{1}^1$ where $Y_{0}^1$ is an elliptic K3 and $Y_{1}^1$ is a $2\mathrm{N}_1$ isotrivial $j$-invariant $\infty$ surface. They are glued along twisted $\mathrm{I}_n^*/\mathrm{N}_1$ fibers with $n > 6$. At $a = \frac{1}{18 - n}$, the component $Y_{0}^1$ undergoes a pseudoelliptic flip to obtain the model in Theorem \ref{thm:singlejinfty} case (e), i.e. $Y_0^a$ is a pseudoelliptic K3 with a twisted $\mathrm{I}_n^*$ pseudofiber glued along an intermediate $\mathrm{N}_3$ fiber $A^a \cup G^a$ of $Y_{1}^a$. As above, the section of $Y_1^a$ contracts onto an $A_1$ singularity for $a \le \frac{1}{12}$  so that $X^a$ consists of a pseudoelliptic isotrivial $j$-invariant $\infty$ surface with an intermediate $\mathrm{N}_3$ pseudofiber and a pseudoelliptic K3 sprouting off it. The $\mathrm{N}_1$ pseudofiber of $Y_1^a$ may have a pseudoelliptic tree sprouting off of it, but it exhibits a Type $\mathrm{W}_\mathrm{III}$ contraction onto the Weierstrass model of the $\mathrm{N}_1$ fiber by Lemma \ref{lemma:N1}. 
		
		Restricting the log canonical divisor to the component $Y_1^a $, we obtain $K_{Y_1^a} + G + (18 - n)aA^a + (6+n)af$ where $f$ is a pseudofiber class. Pulling back to the blowup of the section $\mu : Y_1^{b} \to Y_1^a$ where $b = \frac{1}{12} + \epsilon$, we get
		$$
		\mu^*(K_{Y_1^a} + G + (18 - n)aA^a + (6 + n)af^a) = K_{Y_1^b} + G^b + (18 - n)aA^b + (6 + n)af^b + 12aS_1^b.
		$$
		As above, $A^b$ is the curve obtained by flipping the section $S_0$ of $Y_0^1$ which is a rational curve with self intersection $-\frac{3}{2}$ since $Y_0^1$ has a twisted $\mathrm{I}_n^*$ fiber. Thus we can compute that $(A^b)^2 = -\frac{2}{3}$, $A^b.G^b = \frac{1}{3}$ and $(G^b)^2 = -\frac{1}{6}$. Using push-pull for the contraction $\rho : Y_1^b \to Y_1^1$ onto the model of $Y_1^1$ with a twisted $\mathrm{N}_1$ for the double locus and a Weierstrass $\mathrm{N}_1$ for the other $\mathrm{N}_1$, we get that $K_{Y_1^b} = -f^b + A^b$. Putting all these together and using push-pull for $\mu$ we get that 
		\begin{align*}
			(K_{Y_1^a} + G + (18 - n)aA^a + (6 + n)af).A^a &= (K_{Y_1^b} + G^b + (18 - n)aA^b + (6 + n)af^b + 12aS_1^b).A^b \\ &= 
			\frac{2an}{3} - \frac{1}{3}  \\
			(K_{Y_1^a} + G + (18 - n)aA^a + (6 + n)af).B^a &= (K_{Y_1^b} + G^b + (18 - n)aA^b + (6 + n)af^b + 12aS_1^b).G^b \\ & = \frac{1}{6} + (18 - n)\frac{a}{3}.
		\end{align*}
		For $a < \frac{1}{2n}$, there is an extremal contraction of the curve class of $A^a$ in $\mathscr{X}^a$. On the other hand, since $(A^b)^2 = -\frac{2}{3}$ and $\mu$ is the contraction of a $-2$ curve which intersects $A^b$ transversely, we have $(A^a)^2 = -\frac{1}{6}$ so this curve class is rigid and therefore undergoes a flip. After the flip the strict transform $Y_1^a$ for $a < \frac{1}{2n}$ is now a pseudoelliptic attached along an intermediate pseudofiber of $Y_0^a$. By Lemma \ref{lem:nonlc}, the flipped pseudoelliptic contracts and goes through a Type $\mathrm{W}_\mathrm{III}$ pseudoelliptic flip for some small $a = \epsilon >0$ giving the stable limit as the minimal Weierstrass pseudoelliptic of $Y_0^a$. 
		
	\end{example}
	\begin{remark} By e.g. \cite{shioda}, the maximum $n$ such that there exists an elliptic K3 with an $\mathrm{I}_n^*$ is $14$ and so the above phenomena occur for $6 < n \le 14$.
	\end{remark}

	\begin{figure}[!h]
		\includegraphics[scale=.8]{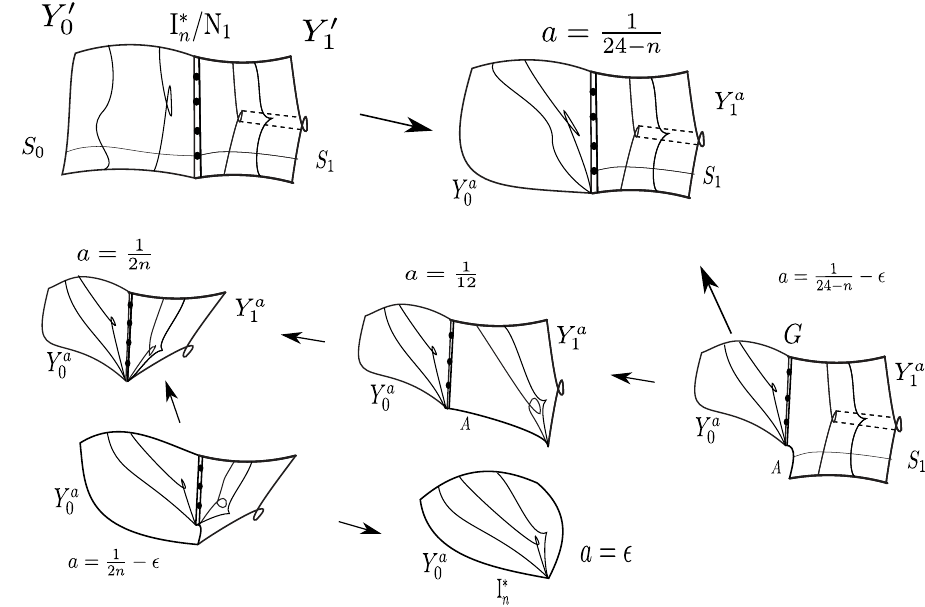}
		\caption{Illustration of Example \ref{ex:2}}\label{fig:k33}
	\end{figure}
	
	Combining these examples above we get the following:
	
	\begin{prop}\leavevmode \begin{enumerate}\item There are type $\mathrm{III}$ walls at $\frac{1}{k}$ for $13 \le k \le 19$ where the isotrivial $j$-invariant $\infty$ main component of the surfaces from Theorem \ref{thm:singlejinfty} case (d) contract as a ruled surface onto the $\mathrm{I}_n$ fiber of the pseudoelliptic K3 sprouting off of it. 
			
			\item  There are type $\mathrm{III}$ walls at $\frac{1}{2n}$ for $6 < n \le 14$ where the isotrivial $j$-invariant $\infty$ main component as in Theorem \ref{thm:singlejinfty} case (e) goes through a flip to become a pseudoelliptic attached to an intermediate model of the $\mathrm{I}_n^*$ on the $K3$ component. At some smaller $a = \epsilon > 0$, this pseudoelliptic contracts onto the Weierstrass model of the $\mathrm{I}_n^*$ fiber. 
		\end{enumerate}
	\end{prop}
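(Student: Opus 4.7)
The plan is to package the two intersection calculations of Examples \ref{ex:1} and \ref{ex:2} into proofs of the two claims by ranging $n$ over its admissible values and invoking Shioda's classification of singular fiber configurations on elliptic K3 surfaces.

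For part (1), I would take a $1$-parameter smoothing of a case (d) surface as in Example \ref{ex:1}. Restricting the log canonical divisor to the isotrivial $j = \infty$ main component $Y_1^a$, pulling back to the blowup $\mu \colon Y_1^b \to Y_1^a$ of its contracted section, and using the local structure of the pseudoelliptic flip to compute $(A^b)^2 = -\tfrac{1}{2}$, $A^b \cdot G^b = \tfrac{1}{2}$, $(G^b)^2 = -\tfrac{1}{2}$, and $K_{Y_1^b} = -2f^b + 2A^b$, the intersection number $(K + \mathscr{F}_a) \cdot A^a = \tfrac{na}{2} - \tfrac{1}{2}$ of Example \ref{ex:1} vanishes exactly at $a = 1/n$, giving the wall. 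Since $(A^a)^2 = 0$, the corresponding extremal ray rules $Y_1^a$ over $G^a$ and the induced contraction is divisorial, collapsing $Y_1^a$ onto the $\mathrm{I}_n$ pseudofiber of $Y_0^a$. The admissible range $13 \leq n \leq 19$ follows from the condition $n > 12$ needed so that Theorem \ref{thm:singlejinfty}(d) produces two main components, together with Shioda's bound \cite{shioda} that an elliptic K3 with an $\mathrm{I}_n$ fiber satisfies $n \leq 19$.

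For part (2), the analogous calculation on a case (e) degeneration from Example \ref{ex:2} yields $(K + \mathscr{F}_a) \cdot A^a = \tfrac{2an}{3} - \tfrac{1}{3}$ and a wall at $a = 1/(2n)$. This time, however, the section of the K3 component $Y_0^1$ meets a twisted $\mathrm{I}_n^*$ fiber with stabilizer of order $2$, giving $(A^b)^2 = -\tfrac{2}{3}$, $A^b \cdot G^b = \tfrac{1}{3}$, $(G^b)^2 = -\tfrac{1}{6}$, and pushing forward we get $(A^a)^2 = -\tfrac{1}{6} < 0$. Hence the extremal ray is rigid and the transformation across this wall is a log flip rather than a divisorial contraction; its output is the strict transform of $Y_1^a$ realized as a pseudoelliptic attached to $Y_0^a$ along an intermediate model of the $\mathrm{I}_n^*$ fiber. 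Since this attaching fiber now violates Assumption \ref{ass:1}, Lemma \ref{lem:nonlc} together with \cite[Proposition 7.4]{calculations} then guarantees that at some smaller $a = \epsilon > 0$ a further Type $\mathrm{W}_{\mathrm{III}}$ contraction will collapse this pseudoelliptic onto the Weierstrass model of the $\mathrm{I}_n^*$ fiber on $Y_0^a$. The range $7 \leq n \leq 14$ comes from the condition $n > 6$ for case (e) and Shioda's bound for elliptic K3s with an $\mathrm{I}_n^*$ fiber \cite{shioda}.

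The main obstacle, once the setup of the examples is in place, is essentially bookkeeping: ensuring the push-pull self-intersection formulas on the blowup of the section are correctly extracted from the local equations of La Nave's flip, since it is precisely the difference in the resulting numerics $(A^a)^2 = 0$ versus $(A^a)^2 = -\tfrac{1}{6}$ that dictates whether the wall crossing is a divisorial contraction (part (1)) or a flip followed by a Type $\mathrm{W}_{\mathrm{III}}$ contraction (part (2)). The rest amounts to invoking Shioda's classification to pin down the ranges of $n$.
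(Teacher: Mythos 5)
Your proposal takes the same route as the paper: the proposition is exactly the synthesis of the intersection-theoretic calculations in Examples~\ref{ex:1} and~\ref{ex:2}, and your reproduction of the numerics ($(A^b)^2 = -\tfrac12$ vs.\ $-\tfrac23$, the intersection numbers $\tfrac{na}{2}-\tfrac12$ vs.\ $\tfrac{2an}{3}-\tfrac13$, and the resulting $(A^a)^2 = 0$ vs.\ $-\tfrac16$ dichotomy between divisorial contraction and flip) matches the paper precisely, as do the invocations of Shioda's bounds to pin down $13 \le n \le 19$ and $6 < n \le 14$.

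There is, however, one genuine logical error in your treatment of the final step of part (2). You write that after the flip, the attaching fiber (the intermediate $\mathrm{I}_n^*$ on the K3 component $Y_0^a$) ``now violates Assumption~\ref{ass:1},'' and that this is what forces the later $\mathrm{W}_{\mathrm{III}}$ contraction. That has the situation backwards. The point of the flip across the $\tfrac{1}{2n}$ wall is that it \emph{restores} Assumption~\ref{ass:1}: before the flip $Y_1^a$ is the main component with a non-slc intermediate $\mathrm{N}_3$ (since $\mathrm{N}_k$ is slc only for $k\le 2$), which is precisely the Assumption~\ref{ass:1} failure identified in Theorem~\ref{thm:singlejinfty}(e); after the flip the rational pseudoelliptic $Y_1^a$ hangs off an intermediate $\mathrm{I}_n^*$, which \emph{is} the intermediate model of a log canonical Weierstrass cusp. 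This is precisely what Lemma~\ref{lem:nonlc} gives: for the $2\mathrm{N}_1$ surface $Y_1$ one has $-S_1^2 = \deg\mathscr{L} = 1 \le 1$, so the cusp after the flip is slc. The subsequent Type $\mathrm{W}_{\mathrm{III}}$ contraction then occurs because the pseudoelliptic is now attached along an slc intermediate fiber, whose $\mathrm{W}_{\mathrm{III}}$ threshold is crossed at some small $a=\epsilon>0$; it is not caused by Assumption~\ref{ass:1} failing, since after the flip it holds. Your conclusion is correct, and the reference to \cite[Proposition 7.4]{calculations} for the contraction criterion is reasonable, but the stated mechanism is inverted and should be fixed.
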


	\begin{cor}\label{cor:stablereplacement} The stable replacement in $\kepsilon$ of the two main component surfaces of $\ktwelve$ from Theorem \ref{thm:twomaincomp} (d) and (e) is a pseudoelliptic K3 with a Weierstrass $\mathrm{I}_n$ respectively $\mathrm{I}_n^*$ fiber. \end{cor}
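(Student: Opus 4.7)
The statement is essentially a packaging of the two preceding examples, which already carry out the intersection-theoretic computations across the relevant walls. My plan would be to exhibit a three-step argument, one for each of the two cases listed.

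First, I would invoke the preceding Proposition directly: for the main component of type $\mathrm{N}_4/\mathrm{I}_n$ there is a Type $\mathrm{W}_{\mathrm{III}}$ wall at $\tfrac{1}{k}$ (for $13 \le k \le 19$) at which the isotrivial $j=\infty$ component contracts onto the $\mathrm{I}_n$ fiber of the attached pseudoelliptic K3, while for the type $\mathrm{N}_3/\mathrm{I}_n^{*}$ main component there is a Type $\mathrm{W}_{\mathrm{II}}$ wall at $\tfrac{1}{2n}$ (for $6<n\le 14$) inducing a log flip of the main component into a pseudoelliptic attached along an intermediate $\mathrm{I}_n^{*}$, followed by a further Type $\mathrm{W}_{\mathrm{III}}$ contraction of this pseudoelliptic onto the Weierstrass model of the $\mathrm{I}_n^{*}$. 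The mechanical content of these two statements was verified in Examples \ref{ex:1} and \ref{ex:2} by restricting $K_{\mathscr{X}^a}+\mathscr{F}^a$ to the isotrivial component, pulling back along the blowup $\mu$ of the section, and using the computed self-intersection $(A^b)^2$. The sign of $(A^a)^2$ after pushforward (zero in the $\mathrm{N}_4$ case, negative in the $\mathrm{N}_3$ case) then dictates whether the extremal contraction is divisorial (ruling) or small (flipping).

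Second, I would argue that once these modifications have been carried out, the only surviving surface component is indeed a pseudoelliptic K3 carrying a Weierstrass $\mathrm{I}_n$ (respectively $\mathrm{I}_n^{*}$) fiber. The isotrivial $j=\infty$ main component is contracted in its entirety in both situations—in the $\mathrm{N}_4$ case directly, and in the $\mathrm{N}_3$ case after a flip followed by the terminal $\mathrm{W}_{\mathrm{III}}$ contraction. The marked pseudofibers that had been supported on the isotrivial component limit to pseudofibers on the remaining pseudoelliptic K3, and the attaching fiber in the Weierstrass model of the K3 component acquires precisely the configuration $\mathrm{I}_n$ or $\mathrm{I}_n^{*}$ dictated by Remark \ref{obs1} and the discriminant count.

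Third, I would verify that no \emph{other} pseudoelliptic trees remain on the resulting K3 at $a=\epsilon$. The trees attached along $\mathrm{N}_1$ pseudofibers are absorbed at walls above $\tfrac{1}{12}$ by Lemma \ref{lemma:N1}, while trees attached along intermediate $\mathrm{II}^{*},\mathrm{III}^{*},\mathrm{IV}^{*}$ fibers contract at the Type $\mathrm{W}_{\mathrm{III}}$ walls $\tfrac{1}{60},\tfrac{1}{36},\tfrac{1}{24}$ identified earlier in this section; any $\mathrm{I}_n/\mathrm{I}_n^{*}$-attached pseudoelliptic trees distinct from the isotrivial one carry too few markings on the K3 side of the argument and would violate the stability condition at $a=\epsilon$. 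The main obstacle, if any, is purely organizational: one must be careful that the flip and contraction in the $\mathrm{N}_3$ case genuinely lower the log discrepancy monotonically, which is precisely what the sign computation $(K_{Y_1^a}+G+(18-n)aA^a+(6+n)af).A^a = \tfrac{2an}{3}-\tfrac{1}{3}$ from Example \ref{ex:2} encodes. Once this is in hand, the classification of the resulting stable slc surface as an irreducible pseudoelliptic K3 with the asserted Weierstrass fiber follows.
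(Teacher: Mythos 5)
Your proposal is correct and takes essentially the same route as the paper: there the corollary is stated as an immediate consequence of Examples \ref{ex:1} and \ref{ex:2} and the proposition summarizing them, which is exactly what you invoke, and your third step (absorption of all remaining pseudoelliptic trees at walls above $\epsilon$, via Lemma \ref{lemma:N1} and the $\frac{1}{60},\frac{1}{36},\frac{1}{24}$ walls) matches the wall analysis already carried out in that section. The only quibble is cosmetic: the paper labels the wall at $\frac{1}{2n}$ in the $\mathrm{N}_3/\mathrm{I}_n^*$ case as a Type $\mathrm{W}_{\mathrm{III}}$ wall rather than $\mathrm{W}_{\mathrm{II}}$, but the geometric content (flip of the isotrivial main component into a pseudoelliptic attached along an intermediate $\mathrm{I}_n^*$, followed by its contraction onto the Weierstrass model) is exactly as you describe.
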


	\begin{prop}\label{prop:nomarkingcanonical}
		If $X$ is a surface parametrized by $\kepsilon$ then $\omega_X \cong \calO_X$.
	\end{prop}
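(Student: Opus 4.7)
The plan is to realize $X$ as the central fiber of a $\Q$-Gorenstein smoothing inside $\kepsilon$ whose generic fiber is a smooth elliptic K3 surface, and then to combine deformation invariance of cohomology for Du Bois singularities with the elliptic canonical bundle formula to conclude $\omega_X\cong\calO_X$.

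Since $\kepsilon$ is defined as the closure of the locus of smooth $24\mathrm{I}_1$ elliptic K3 surface pairs, the point $[X]\in\kepsilon$ admits a $1$-parameter smoothing $(\mathscr{X},\epsilon\mathscr{F})\to B$ over the spectrum of a DVR, with $X$ as special fiber and a smooth elliptic K3 surface as generic fiber. The slc hypothesis guarantees that $K_{\mathscr{X}/B}+\epsilon\mathscr{F}$, and for $\epsilon$ small enough $K_{\mathscr{X}/B}$ itself, is $\Q$-Cartier. I would then rerun the deformation-invariance argument that proved Proposition \ref{obvs:dubois}: since slc surface singularities are Du Bois, the function $b\mapsto h^0(X_b,\omega_{X_b})$ is locally constant, so $h^0(X,\omega_X)=h^0(X_\eta,\omega_{X_\eta})=1$. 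Thus $\omega_X$ admits a nonzero global section, and $K_X\sim E$ for some effective Weil divisor $E$ on $X$.

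Next I would show that $K_X$ is numerically trivial. On each elliptic component $X_i$ of the main component $X_m\subset X$, the canonical bundle formula for the induced fibration $f_i:X_i\to\bP^1$ gives $K_{X_i}\sim f_i^{*}(K_{\bP^1}+\mathscr{L}_i)+R_i$, where $R_i$ records the slc double locus on $X_i$ together with the contributions of any attached pseudoelliptic trees. Because the fibration is of K3 type, $\deg\mathscr{L}_i=2$ and the pulled-back part vanishes on $\bP^1$; the slc adjunction (and the compatibility with the trivial canonical of the smooth generic fiber of the family) forces the residue contributions $R_i$ to cancel globally. The same reasoning handles the pseudoelliptic contractions, which are birational and preserve the canonical class via pushforward of reflexive sheaves, as well as the isotrivial $j=\infty$ components classified in Section \ref{sec:jinfty}. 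The explicit classification of surfaces in $\kepsilon$ from Theorem \ref{thm:ktwelve} together with Corollary \ref{cor:stablereplacement} reduces the check to finitely many cases.

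Finally, since $E\sim K_X$ is effective and $K_X\equiv 0$, intersecting $E$ with an ample class on each irreducible component $X_i$ forces $E|_{X_i}=0$, so $E=0$ as a Weil divisor on $X$ and $\omega_X\cong\calO_X$. The main obstacle is the numerical triviality step: one must carefully track how $K_{X_i}+D_i$ (with $D_i$ the double locus) matches across the slc gluing in the reducible and non-normal cases, and verify that neither pseudoelliptic contractions nor the $\mathrm{N}_k$-fibered components introduce a torsion discrepancy in $\mathrm{Pic}(X)$. Both of these reduce to explicit checks on the surfaces enumerated in Theorem \ref{thm:ktwelve}.
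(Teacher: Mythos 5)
Your overall strategy is genuinely different from the paper's: you want to produce a nonzero section of $\omega_X$ by deformation invariance and then kill it against numerical triviality of $K_X$, whereas the paper never argues on $X$ itself. Instead it passes (inside a one-parameter family) to the ``un-flipped'' model $X'$ in which every component is elliptically fibered and glued along twisted fibers, proves $\omega_{X'}\cong\calO_{X'}$ there by showing $X'$ is Gorenstein and that $\Pic(X')\to\bigoplus_i\Pic(X_i)$ is injective (an explicit cocycle/cokernel computation that rules out nontrivial gluing data along the double curves), and then transports triviality back across the pseudoelliptic flips using that these flips only contract $K$-trivial curves. Your route, if completed, would indeed bypass the Picard-gluing analysis entirely: a nonzero section of a numerically trivial $\omega_X$ cannot vanish on any component (its zero divisor on a component where it is not identically zero would be effective and numerically trivial, and vanishing on an adjacent component would force vanishing along the double curve), so it is an isomorphism in codimension one and hence an isomorphism of S2 sheaves. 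Note also that your closing worry about ``torsion discrepancy in $\Pic(X)$'' is moot in your own strategy for exactly this reason, and that the constancy of $h^0(\omega_{X_b})$ should be routed through constancy of $h^0,h^1$ (Proposition \ref{obvs:dubois}), flatness of $\chi$, and Serre duality on the Cohen--Macaulay fibers rather than quoted directly from the Du Bois property.

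The genuine gap is the numerical triviality step, which is the heart of the matter and is only asserted. The one concrete justification you give is incorrect for the relevant surfaces: you invoke ``K3 type, so $\deg\mathscr{L}_i=2$'' component by component, but for the reducible surfaces of Theorem \ref{thm:kepsilon} (types (E)--(H)) each main component has $\deg\mathscr{L}\le 1$, and triviality of $(K_X)|_{X_i}=K_{X_i}+D_i$ comes from the double-locus fiber supplying the missing degree, not from $\deg\mathscr{L}_i=2$. More seriously, the surfaces of types (C), (D), (F), (H) carry pseudoelliptic trees attached along \emph{intermediate} $\mathrm{N}_2$ (and on $\ktwelve$-level models, $\mathrm{II}^*,\mathrm{III}^*,\mathrm{IV}^*$) fibers; for these one must compute $K+(\text{double locus})$ on non-Weierstrass main components with a two-component intermediate fiber $A\cup E$, and on every component of the attached trees, where the contractions of sections are not crepant and the attaching (pseudo)fibers can be non-reduced. ``Preserve the canonical class via pushforward of reflexive sheaves'' does not settle these intersection numbers, and your appeal to ``explicit checks on finitely many cases'' defers precisely the content of the proposition. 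This is exactly the case analysis the paper avoids by flipping to $X'$, where adjunction along twisted fibers is uniform, and by using the family-level Cone Theorem argument to carry triviality back; if you insist on working on $X$ directly you must actually carry out those component-by-component computations (including the tree components), or find a substitute for the paper's flip-transfer argument.
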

	\begin{proof}
		
		If $X$ is irreducible then the result is clear since $X$ is the contraction of the section, a $(-2)$-curve, on a K3 type Weierstrass fibration. 
		
		Therefore, suppose $X$ is consists of multiple components. Let $p:\mathscr{X} \to D$ be a $1$-parameter family over the spectrum of a DVR with generic fiber a $24\mathrm{I}_1$ elliptic K3 and central fiber $X$. Now there is a sequence of pseudoelliptic flips producing a model $p':\mathscr{X}' \to D$ where the sections of $X$ are blown back up so that the components of central fiber $X'$ of $p'$ are all elliptically fibered and glued along twisted fibers (for example these flips occur as part of the MMP when decreasing the coefficient on the section of the twisted model, or equivalently, $X'$ is the model parametrized by the Brunyate/Inchiostro moduli space). Then $X' = X_0 \cup_{F_0} X_1, \ldots, \cup X_n \cup_{F_n} X_{n+1}$, where $X_0$ and $X_{n+1}$ are rational elliptic surfaces, and $X_1, \ldots, X_n$ are trivial $j$-invariant $\infty$ fibrations. 
		
		Then $K_{X'}|_{X_0} = K_{X_0} + F_0$, $K_{X'}|_{X_{n+1}} = K_{X_{n + 1}} + F_n$, and $K_X|_{X_i} = K_{X_i} + F_{i-1} + F_i$ for $i = 1, \ldots, n$ which are all $0$ by the canonical bundle formula since $X_0, X_{n+1}$ (resp. $X_1, \ldots, X_n$) satisfy $\deg \LL = 1$ (resp. $\deg \LL = 0$). Thus $K_{X'}$ is numerically trivial, that is, $K_{X'} \equiv 0$. 
		
		We proceed in two steps -- first we show that $X'$ is Gorenstein and then we show that the pullback
		\begin{equation}\label{eqpic}
			\Pic(X') \to \bigoplus_{i = 0}^{n+1} \Pic(X_i)
		\end{equation}
		is injective. For the first claim, note that away from the gluing fibers $F_i$, the surface $X'$ is a minimal Weierstrass fibration. From the classification of surfaces (e.g. see Corollary \ref{cor:stablereplacement}), the components $X_i$ are glued along $\mathrm{I}_n$ type fibers and so in a neighborhood of $F_i$, the surface corresponds to a map from a non-stacky nodal curve into $\overline{\calM}_{1,1}$. In particular, in a neighborhood of $F_i$, the elliptic fibration $X' \to C$ is a flat family of nodal curves over a nodal curve. In either case, $X'$ is Gorenstein. 
		
		Next denote by
		$
		\pi : \bigsqcup X_i \to X'
		$
		the natural morphism. By \cite[Proposition 2.6, Remark 2.7]{HP}  there is a diagram of short exact sequences of sheaves of abelian groups on $X'$
		$$
		\xymatrix{
			1 \ar[r] & \calO_{X'}^* \ar[r]^\alpha \ar[d] & \prod_{i = 0}^{n+1} \pi_*\calO_{X_i}^* \ar[r] \ar[d] & \calN \ar[r] \ar@{=}[d] & 0 \\
			1 \ar[r] & \calO_{F'}^* \ar[r]^\beta & \pi_*\calO_F^* \ar[r] & \calN \ar[r] & 0
		}
		$$
		where $F'$ is the double locus on $X'$ and $F$ is the double locus on $X_i$. Note that as an abstract variety, $F$ is the disjoint union of two copies of $F'$. By \cite[Proposition 4.2]{HP},  the map (\ref{eqpic}) is injective if and only if $\gamma:\Pic(F') \to \Pic(F)$ is injective and 
		$
		\mathrm{coker} H^0(\alpha) = \mathrm{coker} H^0(\beta). 
		$
		The map $\gamma$ is simply the diagonal so it is injective. Moreover, since $X'$, $X_i$ and $F_i$ are all connected projective varieties, taking $H^0$ of the above diagram gives
		$$
		\xymatrix{
			1 \ar[r] & k^* \ar[r]^{H^0(\alpha)} \ar[d]_{f_1} \ar[r] & \prod_{i = 0}^{n+1} k^* \ar[d]_{f_2} \\
			1 \ar[r] & \prod_{i = 0}^n k^* \ar[r]^{H^0(\beta)} & \prod_{i = 0}^n k^* \times k^*.
		}
		$$
		
		Here $f_1$ and $H^0(\alpha)$ are the diagonal maps, $H^0(\beta)$ is the product of diagonal maps for each $i$, and $f_2$ is given by
		$
		(x_0, \ldots, x_{n + 1}) \mapsto (x_0, x_1, x_1, x_2, \ldots, x_n, x_{n + 1}). 
		$
		The cokernel of $H^0(\alpha)$ can be identified with $\prod_{i = 1}^{n + 1} k^*$ by the map $(x_0, \ldots, x_{n + 1}) \mapsto (x_1/x_0, \ldots, x_{n + 1}/x_0).$
		Similarly, the cokernel of $H^0(\beta)$ can be identified with $\prod_{i = 0}^n k^*$ by the map
		$
		(a_0, b_0, a_1, b_1, \ldots, a_n, b_n) \mapsto (b_0/a_0, b_1/a_1, \ldots, b_n/a_n).
		$
		Putting this together, we see the induced map on cokernels is given by
		$
		(x_1, \ldots, x_{n + 1}) \mapsto$\\  $(x_1, x_2/x_1, \ldots, x_{n + 1}/x_n)
		$
		which is an isomorphism. Thus we conclude that (\ref{eqpic}) is an injection.

		Putting it all together, we have that $X'$ is Gorenstein and $\omega_{X'}$ pulls back to the trivial line bundle under (\ref{eqpic}) so $\omega_{X'} \cong \calO_{X'}$. It follows that $\omega_{\mathscr{X}'/D} \cong \calO_{\mathscr{X}'}$. Now $\mathscr{X}'$ is related to $\mathscr{X}$ by a sequence of log flips. Since these flips always contract $K$-trivial curves, we conclude from the Cone Theorem (e.g. \cite[Theorem 3.7 (4)]{km}) that the canonical is preserved so $\omega_\mathscr{X} \cong \calO_\mathscr{X}$ so $\omega_X \cong \calO_X$. \end{proof}

	Putting all of this together, we have a classification of the boundary components of $\kepsilon$ (see Section \ref{sec:explicitboundary} for an alternate description).

	\begin{theorem}\label{thm:kepsilon} The surfaces in $\kepsilon$  are the following:
		
		\begin{enumerate} 
			\item[(A)] An irreducible pseudoelliptic K3 with the section contracted to an $A_1$ singularity and minimal Weierstrass pseudofibers. 
			
			\item[(B)] An irreducible isotrivial $j = \infty$ pseudoelliptic with $4\mathrm{N}_1$ Weierstrass fibers.
			
			\item[(C)] An isotrivial $j = \infty$ fibration with $2\mathrm{N}_1$ Weierstrass fibers and an $\mathrm{N}_2$ intermediate fiber with a tree of pseudoelliptics sprouting off of it. 
			
			\item[(D)] An isotrivial $j = \infty$ fibration with $2\mathrm{N}_2$ intermediate fibers each sprouting a tree of pseudoelliptics.
			
			\item[(E)] A union of irreducible pseudoelliptic rational surfaces along an $\mathrm{I}_0$ fiber. 
			
			\item[(F)] A union of isotrivial $j = \infty$ pseudoelliptic surfaces with a single intermediate $\mathrm{N}_2$ fiber sprouting a pseudoelliptic tree on each, glued along an $\mathrm{N}_0$ fiber. 
			
			\item[(G)] A union of irreducible isotrivial $j = \infty$ surfaces each with $2\mathrm{N}_1$ Weierstrass fibers glued along an $\mathrm{N}_0$ fiber. 
			
			\item[(H)] A union of an irreducible isotrivial $j = \infty$ surface with $2\mathrm{N}_1$ Weierstrass fibers and an isotrivial $j = \infty$ surface with a single $\mathrm{N}_2$ fiber sprouting a pseudoelliptic tree, glued along an $\mathrm{N}_0$ fiber. 
			
		\end{enumerate}
		
		Furthermore, every surface $X$ satisfies $\omega_X \cong \calO_X$ and $\rm H^1(X, \calO_X) = 0$. Finally the number of marked $\mathrm{N}_0$ fibers are as in Theorem \ref{thm:singlejinfty} and Theorem \ref{thm:twomaincomp}.
	\end{theorem}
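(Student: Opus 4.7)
The plan is to propagate the classification at $a=\tfrac{1}{12}-\epsilon$ (Theorem \ref{thm:ktwelve}) down to $a=\epsilon$ by identifying every wall between these two values, applying the corresponding birational transformation, and then invoking Propositions \ref{obvs:dubois} and \ref{prop:nomarkingcanonical} for the K3 numerics. The walls below $\tfrac{1}{12}$ are dictated by the log canonical thresholds of the intermediate fibers still present in Theorem \ref{thm:ktwelve}: Type $\wiii$ walls at $\tfrac{1}{60},\tfrac{1}{36},\tfrac{1}{24}$ where rational pseudoelliptic trees attached by $\mathrm{II}^*,\mathrm{III}^*,\mathrm{IV}^*$ fibers contract, together with the walls analyzed explicitly in Examples \ref{ex:1} and \ref{ex:2}.

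First I would treat the single-main-component case of Theorem \ref{thm:ktwelve}(1). Running the three $\wiii$ contractions above forces every $\mathrm{II}^*,\mathrm{III}^*,\mathrm{IV}^*$ fiber to assume its minimal Weierstrass form. When $X_m$ is a normal pseudoelliptic K3 (Proposition \ref{prop:singlenormal12}), this yields case (A). When $X_m$ is isotrivial $j=\infty$ of type (a), (b), or (c) of Theorem \ref{thm:singlejinfty}, the attached trees are glued along $\mathrm{N}_2$ intermediate fibers, which have $\mathrm{lct}=0$ (Remark \ref{rmk:n2}) so never contract, producing (B), (C), (D) respectively. In the remaining subcases (d), (e) of Theorem \ref{thm:singlejinfty}, the isotrivial $j=\infty$ main component carries an attached pseudoelliptic K3 along an intermediate $\mathrm{N}_3$ or $\mathrm{N}_4$ fiber; the wall calculations in Examples \ref{ex:1} and \ref{ex:2} show that this isotrivial component contracts onto (or flips into, and then contracts onto) the K3, leaving a pseudoelliptic K3 with a Weierstrass $\mathrm{I}_n$ or $\mathrm{I}_n^*$ fiber, i.e., case (A) via Corollary \ref{cor:stablereplacement}.

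Next I would handle the two-main-component case of Theorem \ref{thm:ktwelve}(2), which lists parts (1), (4), (5), (6) of Theorem \ref{thm:twomaincomp}. For part (1) the $\mathrm{II}^*/\mathrm{III}^*/\mathrm{IV}^*$ contractions produce (E); parts (4), (5), (6) have intermediate $\mathrm{N}_2$ fibers (or else only Weierstrass $\mathrm{N}_1$ fibers by Lemma \ref{lemma:N1}) and thus pass unchanged to (F), (G), (H). I would check that no further walls intervene on these two-component surfaces by verifying that all remaining special fibers are either on the above list or Weierstrass. The $\mathrm{N}_0$-fiber counts carry over verbatim from Theorems \ref{thm:singlejinfty} and \ref{thm:twomaincomp}, because none of the walls between $\tfrac{1}{12}$ and $\epsilon$ modify smooth marked fibers lying on the main component. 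Finally, $\mathrm{H}^1(X,\calO_X)=0$ follows from Proposition \ref{obvs:dubois}, and $\omega_X\cong\calO_X$ from Proposition \ref{prop:nomarkingcanonical}.

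The main obstacle is the bookkeeping for cases (d) and (e) of Theorem \ref{thm:singlejinfty}: one has to carry out the local intersection-theoretic computations of the log canonical divisor on the blowup of the contracted section (as done in Examples \ref{ex:1} and \ref{ex:2}), verify that the relevant curve class is either ruling or flipping, and confirm that the resulting stable limit always collapses onto a surface already in the list rather than producing a new boundary stratum. A secondary subtlety is ensuring exhaustiveness: each surface in (A)--(H) must also be realized as an actual stable limit, which follows by starting from a Weierstrass family (or a twisted stable maps limit) producing the corresponding $\ktwelve$-limit and running the MMP through the walls above, exactly as in the existence arguments already given for Theorem \ref{thm:twomaincomp}.
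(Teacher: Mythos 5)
Your proposal is correct and follows essentially the same route as the paper: the paper's proof of Theorem \ref{thm:kepsilon} is exactly the assembly of Theorem \ref{thm:ktwelve} with the $\wiii$ walls at $\tfrac{1}{60},\tfrac{1}{36},\tfrac{1}{24}$, the wall analysis of Examples \ref{ex:1} and \ref{ex:2} culminating in Corollary \ref{cor:stablereplacement} for cases (d) and (e), and Propositions \ref{obvs:dubois} and \ref{prop:nomarkingcanonical} for the K3 numerics, with realizability of each stratum handled afterwards via the smoothability results, just as you indicate.
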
 
	
	Now we show that each surface actually appears on the boundary, using the full smoothability results of Section \ref{sec:tsm}.
	
	\begin{theorem}\label{thm:appear}  Every slc surface pair in Theorem \ref{thm:kepsilon} appears in the boundary of $\kepsilon$.
	\end{theorem}
	\begin{proof} Given any surface satisfying the conditions of Theorem \ref{thm:kepsilon}, we can construct a twisted surface whose stable replacement is the surface obtained by flipping the pseudoelliptic components into elliptically fibered ones as in the previous section, replacing each cuspidal fiber by a twisted fiber, and attaching a component with dual monodromy satisfying the conditions of Propositions \ref{prop:tsm} \& \ref{prop:tsm2} to each of these twisted fibers. By the full smoothability Theorems \ref{thm:tsm} \& \ref{thm:tsm2}, this twisted model is the limit of a family of $24\mathrm{I}_1$ elliptic K3 surfaces with singular fibers marked and its stable replacement must be the initial surface as computed in the previous two sections. 
	\end{proof} 
	
	We conclude this section by discussion the connection between $\kepsiloncoarse$ and the GIT quotient $\oMG$.
	
	\begin{theorem}[Connection with GIT/SBB]\label{thm:git2}
		If $\kepsiloncoarse$ denotes the coarse space of $\kepsilon$, then there is a morphism $\kepsiloncoarse \to \oMG \cong \sbb$ with the following structure:
		\begin{enumerate}
			\item The locus of surfaces of type (A) maps isomorphically onto $\MG_s$.
			
			\item The locus of surfaces of type (B) maps as a generic $\mb{P}^{12}$-bundle onto $\oMG_{slc,o}$ by forgetting the marked fibers. The closure of this locus in $\kepsiloncoarse$ parametrizes the unique surface of type (G) along with a choice of marked fibers and this locus all maps onto $\oMG_{slc} \cap \oMG_L$.
			
			\item The locus of surfaces of type (E) maps onto $\oMG_L$ by taking the $j$-invariant of the $\mathrm{I}_0$ fiber along which the two components are glued. 
			
			\item The surfaces of type (C), (D), (F), and (H) all get mapped onto the point $\oMG_{slc} \cap \oMG_L$. 
			
		\end{enumerate}
	\end{theorem}
	
	\begin{proof} 
		By Theorem \ref{thm:kepsilon}, we have a classification of surfaces in $\kepsilon$. Each of the irreducible surfaces mentioned in the theorem are also parametrized by $\sbb$ yielding a rational map $\kepsiloncoarse \to \oMG$ defined on a dense open subset. Now one can easily check that the limit in $\oMG$ of a Weierstrass family limiting to a surface of type (B) (resp. of type (C), (D), (G), (F) and (H)) is the $j$-invariant of the L (resp. $\mathrm{N}_2$) fiber in $\oMG_{L}$. This depends only the central fiber of the family, not the family itself, so the morphism extends uniquely by normality after applying \cite[Theorem 7.3]{gg}. 
	\end{proof}
	
	\section{Explicit description of the boundary of $\kepsilon$}\label{sec:explicitboundary}
	In the previous section, specifically Theorems \ref{thm:kepsilon} and \ref{thm:appear}, we gave an explicit description of the surfaces parametrized by the boundary of $\kepsilon$. The goal of this section, is to enumerate the resulting boundary strata of $\kepsilon$ in a combinatorial way, and akin to Kulikov models (see Proposition \ref{prop:type2} for the analogue of \emph{Type II} degenerations, and Theorems \ref{thm:type31}, \ref{thm:type32}, and \ref{thm:type33} for the analogues of the \emph{Type III} degenerations).\\ 
	
	Before starting, we define $R_n$ to be the space parametrizing pairs $(X, S + F)$, where $X$ is a minimal Weierstrass rational elliptic surface, $S$ is a section, and $F$ is a fiber of type $\mathrm{I}_n$. Note $n \le 9$. The following is well known.
	
	\begin{lemma}\cite[Section 3.3]{hl} $R_n$ is a $9-n$ dimensional affine variety which is irreducible for $n \neq 8$ while $R_8$ has two components. 
	\end{lemma}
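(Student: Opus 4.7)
The plan is to parametrize $R_n$ directly via Weierstrass data, establish the dimension and affineness via explicit coordinates, and address irreducibility (and its failure at $n=8$) by appealing to the classical classification.

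A minimal Weierstrass rational elliptic surface with section is determined by an equation $y^2 = x^3 + A(t)x + B(t)$ with $A \in H^0(\mathbb{P}^1, \mathcal{O}(4))$ and $B \in H^0(\mathbb{P}^1, \mathcal{O}(6))$, giving a $12$-dimensional affine space of Weierstrass data. Isomorphisms are generated by the $\mathrm{PGL}_2$-action on the base together with the $\mathbb{G}_m$-action $(A,B) \mapsto (\lambda^4 A, \lambda^6 B)$. First I would use $\mathrm{PGL}_2$ to move the marked fiber to $t = 0$, leaving the residual symmetry group $\mathrm{Stab}_{\mathrm{PGL}_2}(0) \times \mathbb{G}_m$ of dimension $3$. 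The $\mathrm{I}_n$ condition at $0$ becomes $A(0) \neq 0$ together with $v_0(\mathscr{D}) = n$, where $\mathscr{D} = 4A^3 + 27B^2$.

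For the dimension count and irreducibility in the easy cases, I would solve the vanishing conditions inductively. Writing $\mathscr{D} = \sum d_k t^k$, the equation $d_0 = 4a_0^3 + 27 b_0^2 = 0$ restricted to $a_0 \neq 0$ is the smooth locus of an irreducible cuspidal cubic and admits the rational parametrization $(a_0, b_0) = (-3s^2, 2s^3)$, while for $1 \leq k \leq 6$ the equation $d_k = 0$ takes the form $108 s^3 b_k + (\text{lower-order in } b)$, allowing one to solve linearly for $b_k$ in terms of the lower-index coefficients. For $n \leq 7$ this yields an explicit rational parametrization of $\{v_0(\mathscr{D}) \geq n\} \cap \{A(0) \neq 0\}$ by the tuple $(s, a_1, \ldots, a_4, b_n, \ldots, b_6)$, which is therefore irreducible of dimension $12-n$. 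Quotienting by the $3$-dimensional residual symmetry group yields an irreducible variety of dimension $9-n$, and affineness is transparent from the explicit parametrization once a slice of the residual action is chosen.

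The main obstacle is the cases $n = 8, 9$, in which the inductive procedure determines all of $b_0, \ldots, b_6$ from $d_0 = \cdots = d_6 = 0$, so that the remaining equations $d_7 = 0$ (and $d_8 = 0$) become conditions purely on $(s, a_1, \ldots, a_4)$. For $n = 9$ one checks that the resulting system cuts out an irreducible $0$-dimensional quotient. For $n = 8$, however, the equation $d_7 = 0$ factors non-trivially in $\mathbb{C}[s, a_1, \ldots, a_4]$ after the substitution, yielding two irreducible components; geometrically these correspond to the two classical families of singular-fiber configurations containing an $\mathrm{I}_8$ on a rational elliptic surface, distinguished by the Mordell--Weil torsion of their generic member. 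Rather than grinding through the explicit factorization, I would read off the two-component structure from Persson's classification \cite{persson} and the analysis in \cite[Section 3.3]{hl}.
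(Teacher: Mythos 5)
The paper does not actually prove this lemma; it records it as ``well known'' and cites it verbatim to \cite[Section~3.3]{hl}, so there is no in-paper argument to compare against. Your Weierstrass-coordinate approach is a sensible way to verify the statement from scratch, and for $n \le 7$ it genuinely works: the cuspidal-cubic parametrization of $d_0 = 0$ together with the linear elimination $d_k = 0 \leadsto b_k$ for $1 \le k \le 6$ (valid since $s \neq 0$ by $A(0)\neq 0$) produces a dominant morphism from an irreducible rational source onto $\{v_0(\mathscr D)\ge n,\ A(0)\neq 0\}$, so irreducibility of the open stratum $\{v_0(\mathscr D)=n\}$ and the dimension count $12-n$ come out correctly, and the quotient by the $3$-dimensional residual group gives $9-n$.

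The gap is precisely where the content of the lemma sits. For $n = 8$ you assert that, after substituting the solved $b_0,\ldots,b_6$, the equation $d_7 = 0$ factors into exactly two irreducible pieces, and then say you would ``read off'' this fact from Persson and from \cite[Section~3.3]{hl}. The second citation is circular: \cite[Section~3.3]{hl} is exactly the reference the lemma is already attributed to, so appealing to it at the crux of the argument turns your ``blind proof'' back into the original citation. And Persson's classification \cite{persson} by itself does not settle the question: it lists which singular-fiber configurations containing an $\mathrm I_8$ occur, but it does not tell you how those configurations fit together into irreducible families. In particular the coexistence of, say, $\mathrm I_8 + 4\mathrm I_1$ and $\mathrm I_8 + \mathrm I_2 + 2\mathrm I_1$ does not by itself produce a second component, since specializations of that kind routinely happen inside a single irreducible stratum. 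What is actually needed is either the explicit factorization of $d_7$, or the lattice-theoretic argument that the saturation of the trivial lattice $U\oplus A_7$ inside $\mathrm{NS}(X)\cong E_{10}$ can be $U\oplus A_7$ or its index-$2$ overlattice $U\oplus E_7$, giving two deformation classes distinguished by $2$-torsion in the Mordell--Weil group --- and then a verification that both classes are nonempty and each irreducible. As written you invoke the conclusion rather than prove it, and the same phrase ``one checks'' covers the $n=9$ case.

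A secondary point: your step ``affineness is transparent from the explicit parametrization once a slice of the residual action is chosen'' deserves a sentence more, since $\mathrm{Stab}_{\mathrm{PGL}_2}(0)\times\mathbb G_m$ is a (non-reductive) Borel times a torus, and affineness of the quotient is not automatic for non-reductive groups. One needs to exhibit a genuine (affine) slice --- for instance use the unipotent radical of the Borel to normalize a further coefficient of $A$ or $B$, then the remaining reductive $\mathbb G_m\times\mathbb G_m$ action, and only then conclude that the quotient is an affine variety. This is doable in your explicit coordinates, but the claim as stated glosses over it.
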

	
	Using these spaces, we will explicitly describe the boundary of $\kepsilon$. To do so, we use the notation of Kulikov models (i.e. type II and type III). 
	
	\subsection{Type II degenerations}
	We begin with the Type II degenerations. 
	
	\begin{prop}\label{prop:type2} There are two Type II strata described as follows. 
		
		\begin{enumerate}
			
			\item A dim. 17 stratum $\rm{W}_{II}$ isomorphic to a quotient of the fiber product $R_0 \times_j R_0$: namely the self fiber product of the $j$-map $j : R_0 \to \mb{A}^1$. A point parametrizes two rational elliptic surfaces with a marked $\mathrm{I}_0$ fiber of the same $j$-invariant glued along this fiber and the quotient comes from swapping the two surfaces ((E) in Theorem \ref{thm:kepsilon}). 
			
			\item A dim. $17$ stratum $\rm{W}_{II}^\infty \cong \Sym^{16}(\mb{P}^1) \times \mb{A}^1$ where $\mb{A}^1$ is the $j$-line. The $j$-line parametrizes the $4\mathrm{N}_1$ isotrivial $j$-invariant $\infty$ component and $\Sym^{16}(\mb{P}^1)$ parametrizes the $m$ markings on this surface other than the $\mathrm{N}_1$ fibers counted with multiplicity (Theorem \ref{thm:kepsilon} B).
			
		\end{enumerate}
		
	\end{prop}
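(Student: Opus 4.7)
The plan is to identify each of the two strata explicitly by unpacking cases (E) and (B) of the classification in Theorem~\ref{thm:kepsilon} and then checking the resulting moduli descriptions.

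For $\rm{W}_{II}$: a surface of type (E) is $X = X_1 \cup_F X_2$, where each $X_i$ is an irreducible rational pseudoelliptic glued to the other along a smooth $\mathrm{I}_0$ fiber $F$. I would first blow up the contracted section of each $X_i$ to recover a minimal Weierstrass rational elliptic surface with section and marked $\mathrm{I}_0$ fiber, i.e.\ a point of $R_n$ for $n=0$. The gluing along $F$ requires the two $\mathrm{I}_0$ fibers to be isomorphic as elliptic curves; since each is smooth, this is equivalent to matching $j$-invariants, so the data lie in the fiber product $R_0 \times_j R_0$. The remaining swap symmetry of the two components gives the $S_2$-quotient, and the dimension is $2\cdot 9 - 1 = 17$ by the preceding lemma. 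Existence of all such configurations as limits of $24\mathrm{I}_1$ elliptic $K3$'s is guaranteed by the smoothability statement Theorem~\ref{thm:tsm2}.

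For $\rm{W}_{II}^\infty$: a surface of type (B) is an irreducible pseudoelliptic whose Weierstrass model is a $4\mathrm{N}_1$ isotrivial $j=\infty$ fibration. By the explicit Weierstrass equations from the Remark after Proposition~\ref{prop:k3typejinfty}, such a fibration is parametrized up to isomorphism by $\lambda \in \mb{P}^1 \setminus \{0,1,\infty\}$ modulo the natural $S_3$ permutation action on the four $\mathrm{N}_1$ positions, yielding the $j$-line $\mb{A}^1$. For the markings, Proposition~\ref{prop:Nkdiscriminant} forces at least two markings on each $\mathrm{N}_1$ fiber, and Theorem~\ref{thm:singlejinfty}(a) gives $l \leq 16$; since the total is $24$ and $4\cdot 2 + 16 = 24$, the generic surface in this stratum carries exactly $l = 16$ marked $\mathrm{N}_0$ fibers, which move in $\Sym^{16}(\mb{P}^1)$. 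Smoothability via Theorem~\ref{thm:tsm2} ensures that every point of $\Sym^{16}(\mb{P}^1) \times \mb{A}^1$ actually arises.

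The main obstacle will be the careful bookkeeping of automorphisms. In case $(1)$ one must confirm that, once the fiber-product condition is imposed, the only residual symmetry is the swap of the two components, so that no further quotient is needed. In case $(2)$ one must verify that, after fixing the $j$-invariant $\lambda$, the Klein four group of automorphisms of the base $\mb{P}^1$ preserving the four $\mathrm{N}_1$ positions is absorbed into the automorphisms of the pair $(X, S + \epsilon F_\calA)$ itself, so that the moduli is genuinely a direct product $\Sym^{16}(\mb{P}^1) \times \mb{A}^1$ rather than a further quotient. In both cases the claimed dimension of $17$ provides an immediate consistency check.
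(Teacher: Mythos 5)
The paper in fact gives no explicit proof of Proposition~\ref{prop:type2}; it follows from the classification in Theorem~\ref{thm:kepsilon} together with the preceding remarks and the smoothability theorems, which is essentially the reasoning you reconstruct. Your treatment of case~(1) is correct: blowing the section back up on each pseudoelliptic component recovers a pair in $R_0$, smooth fibers glue only when their $j$-invariants agree, the residual swap gives the $S_2$-quotient, and $2\cdot 9 - 1 = 17$ is the right dimension.

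The issue is in case~(2), and it is one you yourself half-identify but then resolve in the wrong direction. For a generic $\lambda$, the group $V_4 \subset \mathrm{PGL}_2$ of cross-ratio--preserving involutions \emph{does} lift to automorphisms of the isotrivial $4\mathrm{N}_1$ surface $(X,S)$, precisely because the fibration is constant away from the cusps. But this is what \emph{forces} a further quotient, not what obviates it: if $\phi \in V_4$ is an automorphism of $(X,S)$, the pairs $(X, S + \epsilon F_\calA)$ and $(X, S + \epsilon\, \phi^* F_\calA)$ are isomorphic as stable pairs even though $F_\calA$ and $\phi^*F_\calA$ are generally distinct $16$-point configurations. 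Two pairs related by $V_4$ map to the same moduli point, so the coarse locus is a quotient of $\Sym^{16}(\mb{P}^1)$ fibered over the $\lambda$-line by the full $S_4$, with $V_4$ acting fiberwise and $S_3 = S_4/V_4$ acting on $\lambda$. Your claim that ``absorption into $\mathrm{Aut}(X, S + \epsilon F_\calA)$ yields a direct product'' conflates automorphisms of the marked pair (which would appear only as stacky isotropy) with automorphisms of the underlying unmarked surface (which identify distinct markings). A correct finish must either argue directly that the resulting quotient is again isomorphic to $\Sym^{16}(\mb{P}^1) \times \mb{A}^1$, or else read the statement the way the Type~$\mathrm{III}$ strata further down in this section are phrased, namely as a \emph{finite parametrization} rather than a literal isomorphism of varieties; as written, the gap is genuine and the concluding reasoning is inverted.

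Two smaller points worth making explicit rather than delegating to smoothability: you should note that the minimum of $2$ markings forced onto each $\mathrm{N}_1$ fiber by Proposition~\ref{prop:Nkdiscriminant}, together with $24 = 4\cdot 2 + 16$, pins the generic $l$ at $16$, and that the bound $4 \le l \le 16$ from Theorem~\ref{thm:singlejinfty}(a) is what allows the degenerate configurations (markings colliding with each other or with $\mathrm{N}_1$ positions) to remain inside the same stratum as points of $\Sym^{16}(\mb{P}^1)$ with multiplicity.
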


	\subsection{Type III degenerations}
	We now discuss the type III degenerations. The first step is to ``un-flip'' the pseudoelliptic components in the description in Theorem \ref{thm:kepsilon}. After, we can describe each surface as a chain $X_0 \cup \ldots \cup X_{n+1}$, where both $X_0$ and $X_{n+1}$ are Weierstrass fibrations of rational type (i.e. $\deg \mathscr{L} = 1$), and $X_1, \ldots, X_n$ are all isomorphic to trivial $j$-invariant $\infty$ fibrations $C \times \bP^1$, with $C$ being a nodal cubic. These surfaces are all glued along nodal cubic fibers (i.e. either $\mathrm{I}_n$ or $\mathrm{N}_0$ fibers). Further, each $X_i$ for $i = 1, \ldots, n$ must have \emph{at least} one marked fiber by stability.  We call the surfaces $X_0$ and $X_{n+1}$ the \emph{end components} and $X_1, \ldots, X_n$ the \emph{intermediate components}. 
	
	\begin{lemma}\label{lem:endmarkings} An end component must have at least (a) $3$ marked fibers if it is normal, or (b) $4$ marked fibers if it is isotrivial $j$-invariant $\infty$, counted with multiplicity. \end{lemma}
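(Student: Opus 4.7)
The plan is to use discriminant bookkeeping on each end component. Let $X_0$ be an end component, glued along a single nodal-cubic fiber $G$ to the next component of the chain.

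For part (a), I would observe that $X_0$ is a normal rational Weierstrass fibration, so $\deg \mathscr{L} = 1$ and the induced $j$-map $\mathbb{P}^1 \to \overline{\calM}_{1,1}$ has degree $12$. The gluing fiber $G$ is then of type $\mathrm{I}_n$ for some $n \geq 0$ (its twisted model being a nodal cubic), and contributes $n$ to the $j$-map degree. By full smoothability (Theorem \ref{thm:tsm2}) together with the discriminant-count argument behind Remark \ref{obs1}, every other singular fiber on $X_0$ arises as a marked fiber whose multiplicity equals its contribution to the discriminant. Hence the total number of marked fibers counted with multiplicity equals $12 - n$. I would then invoke Persson's classification of rational elliptic surfaces --- specifically that the maximal $n$ for which a single $\mathrm{I}_n$ fiber occurs on a rational elliptic surface is $n = 9$ (realized by the extremal $\mathrm{I}_9 + 3\mathrm{I}_1$ configuration) --- to conclude $12 - n \geq 3$.

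For part (b), $X_0$ is an isotrivial $j = \infty$ end component with $\deg \mathscr{L} = 1$. Combining Proposition \ref{prop:jinftyfibers} with the slc constraint $k \leq 2$, the multiplicities of $\mathrm{N}_k$ fibers on $X_0$ must satisfy $\sum a_k k = 2$, so $(a_1, a_2) \in \{(2,0), (0,1)\}$; that is, $X_0$ carries either $2\mathrm{N}_1$ or $1\mathrm{N}_2$ fibers. I would rule out the $1\mathrm{N}_2$ case by tracking the pseudoelliptic flips relating the stable-pair description of Theorem \ref{thm:kepsilon} to the unflipped chain: every intermediate $\mathrm{N}_2$ fiber from Theorem \ref{thm:kepsilon} hosts a pseudoelliptic tree which, after unflipping, becomes a continuation of the chain attached along the twisted model of $\mathrm{N}_2$, namely $\mathrm{N}_0$ (Corollary \ref{cor:monodromyNk}). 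Hence no $\mathrm{N}_2$ fiber survives on an end component of the unflipped chain; if $X_0$ had the $1\mathrm{N}_2$ configuration, it would in fact be a trivial $C \times \mathbb{P}^1$ with $\deg \mathscr{L} = 0$, contradicting that it is an end component of rational type. Thus $X_0$ is of type $2\mathrm{N}_1$, and by Proposition \ref{prop:Nkdiscriminant} each $\mathrm{N}_1$ fiber of $X_0$ receives at least $2$ marked $\mathrm{I}_1$ fibers in any smoothing to a $24\mathrm{I}_1$ elliptic K3, yielding at least $4$ markings in total counted with multiplicity.

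The main obstacle will be the ruling-out step in (b) --- in particular, verifying that the $1\mathrm{N}_2$ configuration genuinely cannot persist on an end component after unflipping, which requires careful tracking of the pseudoelliptic flips between the stable-pair and Brunyate models. Part (a) is essentially a discriminant-degree count paired with a reference to Persson's tables; the only real content there is the identification of the gluing multiplicity $n$ with the deficit from the full discriminant degree $12$ via smoothability.
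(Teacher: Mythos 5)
Your argument is correct and takes essentially the same route as the paper: for a normal end component the paper likewise counts $12-n$ markings against the bound $n\le 9$ for an $\mathrm{I}_n$ fiber on a rational elliptic surface, and for an isotrivial $j$-invariant $\infty$ end component it likewise reduces to the $2\mathrm{N}_1$ configuration glued along an $\mathrm{N}_0$ and applies Proposition \ref{prop:Nkdiscriminant} to get at least $2$ markings per $\mathrm{N}_1$. Your additional steps (ruling out a surviving $\mathrm{N}_2$ on an end component via unflipping, and justifying the $12-n$ count by smoothability, which is really a discriminant-degree rather than $j$-map-degree count) only make explicit what the paper's proof asserts.
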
 
	
	\begin{proof} If an end component is an isotrivial $j$-invariant $\infty$ surface, then it must be $2\mathrm{N}_1$ fibration glued along an $\mathrm{N}_0$ fiber. Each $\mathrm{N}_1$ must carry at least $2$ markings counted with multiplicity so the surface carries at least $4$. If it is a normal rational elliptic surface, then the number of markings is given by $12 - n$ where the surface is glued along an $\mathrm{I}_n$ fiber. Since $n \leq 9$ for $I_n$ fibers on a rational elliptic surface, then there are at most $3$ markings on such a component. \end{proof} 
	
	\begin{cor} For the chains $X_0 \cup \ldots \cup X_{n+1}$ in the Type III locus, $n$ is at most $18$. \end{cor}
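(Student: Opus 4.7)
The plan is a simple counting argument, bounding the total number of marked fibers in two ways. The key inputs are: (i) the generic fiber of the family is a $24\mathrm{I}_1$ elliptic K3, so the total discriminant of the limiting surface $X = X_0 \cup \ldots \cup X_{n+1}$ carries exactly $24$ markings counted with multiplicity (as already used throughout Section 6 to constrain cusp multiplicities); (ii) Lemma \ref{lem:endmarkings} gives a lower bound of $3$ on the number of markings supported on each end component (regardless of whether it is normal or an isotrivial $j = \infty$ surface, since $3 \le 4$); and (iii) by stability, each intermediate component $X_i$ for $1 \le i \le n$ must carry at least one marked fiber, since the only other special points on $X_i$ come from the two gluing fibers $F_{i-1}$ and $F_i$.

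Putting these together, I would simply sum the lower bounds on the marked fibers:
\[
24 \;\ge\; \#\{\text{markings on } X_0\} + \sum_{i=1}^{n} \#\{\text{markings on } X_i\} + \#\{\text{markings on } X_{n+1\}} \;\ge\; 3 + n + 3,
\]
which rearranges to $n \le 18$.

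There is no real obstacle: all the hard work has already been done in Proposition \ref{prop:tsm} (conservation of the total discriminant along twisted stable maps degenerations) and in Lemma \ref{lem:endmarkings} (the per-component lower bound on markings for end components). The only thing to verify carefully is that the markings on distinct components are genuinely disjoint (they are, since the double loci are unmarked fibers), so no marking is double-counted in the sum. Stability of the intermediate components, needed for the $+n$ contribution, was already noted in the paragraph introducing the chain decomposition.
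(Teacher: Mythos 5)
Your argument is correct and is essentially identical to the paper's proof: both count the $24$ total markings, use Lemma \ref{lem:endmarkings} to reserve at least $3+3=6$ of them for the two end components, and use stability to place at least one marking on each of the $n$ intermediate components, yielding $n \le 18$.
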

	
	\begin{proof} As there is $\geq 1$ marking on each of the intermediate components, the number of components is bounded by the number of markings not on $X_0$ and $X_{n+1}$. By Lemma \ref{lem:endmarkings}, there are $\geq 6$ combined on these components so $\leq 18$ markings to be distributed among the intermediate components. 
	\end{proof} 
	
	Now we will describe an explicit parameterization of each of the Type III strata. There are three cases depending on whether none, one, or both of the end components $X_0$ and $X_{n+1}$ are isotrivial $j$-invariant $\infty$. We call these strata of type $\mathrm{III}_0$, $\mathrm{III}_1$ and $\mathrm{III}_2$ respectively. The type $\mathrm{III}_0$ strata are further indexed by the fiber types $\mathrm{I}_{r}$ and $\mathrm{I}_{s}$ along which $X_0$ and $X_{n+1}$ are glued. In this case, there are $12 - r$ and $12-s$ fibers marked on $X_0$ and $X_{n+1}$ respectively which gives us $(r + s)$ markings remaining for the middle components $X_1, \ldots, X_n$. Thus, $n$ must satisfy 
	$
	1 \le n \le (r + s).
	$
	
	Finally, for each $n$, we can fix a single marking on each component $X_1, \ldots, X_n$ and fix coordinates so that the components are glued along fibers at $0, \infty$ and the chosen marking is at $1$. That gives us freedom to parametrize $r + s - n$ additional markings among the $X_1, \ldots, X_n$. For each choice of partition
	$
	\sum_{i = 1}^n a_i = r + s - n
	$
	we can consider the stratum where there are $a_i$ markings on $X_i$.
	
	\begin{theorem}[Type $\mathrm{III}_0$ locus]\label{thm:type31} Fix data
		$$ 1 \le r,s \le 9, \quad 1 \le n  \le r + s, \quad \sum_{i = 1}^n a_i = r + s - n, $$
		There is a type $A$ stratum
		$
		\mathrm{III}^{r,s,n}_{0, a_1, \ldots, a_n} 
		$
		of  $\dim(\mathrm{III}^{r,s,n}_{0, a_1, \ldots, a_n}) = 18 - n$ with a finite parameterization by 
		$
		R_s \times \mb{G}_m^{a_1} \times \ldots \times \mb{G}_m^{a_n} \times R_r.
		$
		Here a point of the above product determines the surface pairs $X_0, X_{n +1}$ as well as the configuration of $a_i$ marked fibers on the $X_1, \ldots, X_n$ avoiding the double locus. \end{theorem}
	
	\begin{remark} Just to reiterate, the $R_s$ and $R_r$ parametrize the surfaces $X_0$ and $X_{n+1}$ respectively, and the $\mb{G}_m^{a_i}$ parametrize the marked fibers on $X_i$ avoiding the double locus. \end{remark}
	
	Next, we consider type $\mathrm{III}_1$ strata where exactly one of the end surfaces, without loss of generality $X_0$, is an isotrivial $j$-invariant $\infty$ surface of rational type. Then $X_0$ must be the $2\mathrm{N}_1$ surface glued along an $\mathrm{N}_0$ fiber. There are $2$ markings each on the $\mathrm{N}_1$ fibers for a total of $4$. Then for each $0 \le s \le 17$, there is a stratum with $17 - s$ marked $\mathrm{N}_0$ fibers on $X_0$ (c.f. Theorem \ref{thm:singlejinfty}). After picking coordinates so that the $\mathrm{N}_1$ fibers are at $0$ and $1$ and the double locus is at $\infty$, these $17-s$ markings must avoid $\infty$ and so give a a factor of $\mb{A}^{17-s}$ parametrizing $X_0$. The other end component $X_{n+1}$ is a rational elliptic surface glued along an $\mathrm{I}_r$ fiber for some $r$ and with $12-r$ marked fibers.
	
	This gives $33 - s - r$ total markings on $X_0$ and $X_{n+1}$. On the other hand, there are at most $24$ markings so
	$
	33 - s - r \le 24. 
	$
	In the case of equality, there are no intermediate components and we have a stratum parametrized by $\mb{A}^{17 - s} \times R_r$. Otherwise, we have 
	$
	1 \le n \le s + r - 9
	$
	intermediate components with $s + r - 9$ markings distributed on them. After fixing one marking on each intermediate component at coordinate $1$, there are $r + s - 9 - n$ marked fibers partitioned into 
	$
	\sum_{i = 1}^n a_i = r + s - 9 - n. 
	$
	This gives a finite parameterization by 
	$
	\mb{A}^{17 - s} \times \mb{G}_m^{a_1} \times \ldots \times \mb{G}_m^{a_n} \times R_r
	$.
	
	\begin{theorem}[Type $\mathrm{III}_{1}$ locus]\label{thm:type32}\leavevmode
		\begin{enumerate}
			\item Fix the data
			$$  1 \le r\le 9, \quad
			0 \le s \le 17, \quad 
			s + r = 9
			$$
			There is a type $\mathrm{III}_1$ stratum $\mathrm{III}^{r,s}_{1}$ of $\dim(\mathrm{III}^{r,s}_{1}) = 17$ with a finite parameterization by $\mb{A}^{17 - s} \times R_r$. 
			\item Fix the data
			$$  1 \le r\le 9, \quad
			1 \le s \le 17, \quad
			1 \le n \le s + r - 9, \quad
			\sum_{i = 1}^n a_i = r + s - 9 - n.$$
			There is a type $\mathrm{III}_1$ stratum
			$
			\mathrm{III}^{r,s,n}_{1, a_1, \ldots, a_n}
			$
			of $\dim(\mathrm{III}^{r,s,n}_{1, a_1, \ldots, a_n}) = 17 - n$ with a finite pamaterization by 
			$
			\mb{A}^{17 - s} \times \mb{G}_m^{a_1} \times \ldots \times \mb{G}_m^{a_n} \times R_r.
			$
			
		\end{enumerate}
	\end{theorem}
	
	\begin{remark} Again, here $\mb{A}^{8-s}$ parametrizes the $8-s$ marked $\mathrm{N}_0$ on $X_0$, the $\mb{G}_m^{a_i}$ parametrizes the marked $\mathrm{N}_0$ on the $X_i$, and $R_r$ parametrizes the surface $X_{n+1}$. \end{remark}
	Finally, we have the type $\mathrm{III}_2$ stratum where both $X_0$ and $X_{n+1}$ are isotrivial $j$-invariant $\infty$. In this case both $X_0$ and $X_{n+1}$ are described by an affine space of dimension $17 - s$ and $17 - r$ respectively, where there are $17-s$ and $17-r$ marked $\mathrm{N}_0$ fibers on $X_0$ and $X_{n + 1}$ in addition to the $2\mathrm{N}_1$ which each appear with multiplicity $2$. This gives $42 - r - s$ total marked fibers among the end components, so $42 - r - s \le 24$ and we again have two cases: this is an equality and there are no intermediate components, or this inequality is strict and there are intermediate components with $r + s - 18$ marked fibers. Thus, as before so we obtain the following:
	
	\begin{theorem}[Type $\mathrm{III}_2$ locus]\label{thm:type33}\leavevmode
		\begin{enumerate} 
			\item Fix the data
			$$  0 \le s,r \le 17, \quad
			s + r = 18
			$$
			There is a Type $\mathrm{III}_2$ stratum $\mathrm{III}_{2}^{r,s}$ of  $\dim(\mathrm{III}_2^{r,s}) = 16$ with a finite parameterization by $\mb{A}^{17-s} \times \mb{A}^{17 - s} = \mb{A}^{16}.$
			\item Fix the data
			$$
			1 \le s,r \le 17, \quad
			1 \le n \le s+r-18 \quad
			\sum_{i = 1}^n a_i = r + s - n - 18.
			$$
			There is a Type $\mathrm{III}_2$ stratum 
			$\mathrm{III}^{r,s,n}_{2, a_1, \ldots, a_n}
			$
			of $\dim(\mathrm{III}^{r,s,n}_{2, a_1, \ldots, a_n}) = 16 - n$ with a finite parameterization by
			$
			\mb{A}^{17 - s} \times \mb{G}_m^{a_1} \times \ldots \times \mb{G}_m^{a_n} \times \mb{A}^{17 - r}.
			$
		\end{enumerate}
	\end{theorem}
	
	\begin{remark} In the above theorem, the $\mb{A}^{17-s}$ (resp. $\mb{A}^{17-r}$) parametrize the markings on $X_0$ (resp. $X_{n+1}$), and the $\mb{G}_m^{a_i}$ parametrize the markings on $X_i$. \end{remark}

	\section{The spaces with one marked fiber}\label{sec:onefiber}
	
	The goal of this section is to describe the surfaces parametrized by the boundary of the moduli spaces $\ke$ (resp. $\fe$), i.e. the moduli spaces parametrizing one $\epsilon$-marked singular fiber (resp. any fiber).
	In Section \ref{sec:onemarked} we describe the boundary of the two moduli spaces (see Theorem \ref{thm:boundary}). In Section \ref{sec:maptoGIT} we prove Theorem \ref{thm:git}, which describes a morphism from $\ke$ to $\oMG$.  Finally, in Section \ref{sec:git2} we extend Miranda's GIT construction to produce a moduli space of Weierstrass surfaces with a choice of marked fiber. The main result in this direction is Theorem \ref{thm:mainresult}, which shows that $\km$ is a smooth Deligne-Mumford stack with coarse space map $\km \to \oMGd$ given by the extended GIT compactification we discuss in Section \ref{sec:git2}.

	%These spaces may seem less natural a priori since they depend on the choice of a fiber, rather than having an intrinsic marking. However, Theorem \ref{thm:mainresult} suggests they are rather well behaved. 

	\subsection{The spaces with one marked fiber}\label{sec:onemarked}
	In this section we first consider the moduli space $\km$ (see Definition \ref{def:modulispace}), which corresponds to marking only one (possibly singular) fiber with $\epsilon$ weight. In particular, we give a description of the surfaces parametrized by the boundary. Note that since $\ke$ is a slice of $\km$, this description also applies to the surfaces parametrized by $\ke$. 
	
	\begin{theorem}[Characterization of the boundary]\label{thm:boundary} 
		The surfaces parametrized by $\km$ are single component pseudoelliptic K3 surfaces whose corresponding elliptic surfaces are semi-log canonical Weierstrass elliptic K3s, and the marked fiber $F$ can be any fiber other than an L type cusp. Moreover, all surfaces parametrized by $\km$ satisfy $\mathrm{H}^1(X, \calO_X) = 0$ and $\omega_X \cong \calO_X$. \end{theorem}
	
	\begin{proof}
		We follow the explicit stable reduction process explained in e.g. \cite[Section 6]{master}.  Let $(f: \mathscr{X} \to \mathscr{C}, \mathscr{S} + \mathscr{F}) \to T$ be a 1-parameter family whose generic fiber $(f: X_\eta \to C_\eta, S_\eta +  F_\eta)$ is a Weierstrass elliptic K3 surface with 24 $\mathrm{I}_1$ fibers, and a single (possibly singular) marked fiber $F_\eta$.  Denote by $(f_0: X_0 \to C_0, S_0 + F_0)$ the special fiber, and consider the limit obtained via twisted stable maps (see e.g. \cite{tsm}). The limit $(f_0 : X'_0 \to C'_0, S'_0 + F'_0)$, will be a tree of elliptic fibrations glued along twisted fibers, and the closure of the fiber $F$ will be contained in precisely one such surface component. While this surface will be stable as a map to $\overline{\calM}_{1,1}$, it will not necessarily be stable as a surface pair. To resolve this, pick some generic choice of markings $G = \cup_{i \in I} G_i$ to make the above limit stable as a surface pair. In this case, $G$ will consist of generic smooth fibers. 
		
		As we (uniformly) lower the coefficients marking $G$ towards 0, there will be some choice of coefficient so that the weighted stable base curve is an irreducible rational curve. Indeed, the components of the base curve will contract precisely when there is not enough weight being supported on the marked fibers. As we only lowered the coefficients marking $G$, and the fiber $F'_0$ remained marked with coefficient one, the (unique) main component, call it $Y_0$ fibered over the rational curve will contain the original marked fiber. 
		
		Now we have a single main component with marked fiber $F_0'$ with Type I pseudoelliptic trees attached to it. When the coefficients of $G$ are set to $0$ the Type I trees will undergo Type $\mathrm{W}_{\mathrm{III}}$ contractions to a point to produce the Weierstrass model of $Y_0$, away from the fiber $F_0'$. When the coefficient of $F_0'$ is reduced to $0 < \epsilon \ll 1$, it will cross $\mathrm{W}_{\mathrm{I}}$ walls to become a Weierstrass fiber.

		We saw in Proposition \ref{obvs:dubois} that $\rm{H}^1(X, \calO_X) = 0$, so it suffices to show that $\omega_X \cong \calO_X$. This holds on any Weierstrass elliptic K3 surface (see \cite[Proposition III.1.1]{mir3}), and since $X$ is obtained from a Weierstrass elliptic K3 by contracting a $(-2)$ curve (the section), we have $\omega_X \cong \calO_X$. 
	\end{proof}
	
	%\begin{remark}\label{rmk:pseudosurface}
	%We note that $\omega_X$ is not necessarily $\calO_X$ for surfaces $X$ parametrized by spaces with higher weights due to the presence of pseudoelliptic components, and so we argue that $\km$ and $\ke$ are in some sense the ``best'' compactifications. We will see in Theorem \ref{thm:mainresult} that $\km$ is smooth. 
	%\end{remark}

	\subsection{Stable pairs to GIT / SBB}\label{sec:maptoGIT}
	The goal of this section is to describe the morphism from $\kepsiloncoarse \to \oMG$ (and thus to $\sbb$).

	\begin{theorem}[Connection with GIT / SBB]\label{thm:git}
		Let $\me$ be the coarse moduli space of $\ke$ and let $\Delta \subset \me$ be the boundary locus parametrizing surfaces with an L type cusp, with $U = \me \setminus \Delta$. There is a morphism $\me \to \oMG \cong \sbb$, such that the following diagram commutes: $$
		\begin{tikzcd}
			\Delta \arrow[r,hook] \arrow{d}[swap]{j} & \me \arrow[d] & U \arrow[d] \arrow[l,hook'] \\ 
			\mb{P}^1 \arrow[r] & \oMG & \oMG_s \arrow[l,hook']
		\end{tikzcd}
		$$
		
		$j: \Delta \to \mb{P}^1$ sends a surface with an L cusp to its $j$-invariant, the morphism $U \to \oMG_s$, is proper and finite of degree $24$, and $\mb{P}^1 \to \oMGL \subset \oMG$  maps bijectively onto the strictly GIT semistable locus. \end{theorem}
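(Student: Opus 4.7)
The plan is to construct the morphism $\me \to \oMG$ by sending each surface pair $(X, \epsilon F)$ in $\me$ to the $\mathrm{SL}_2$-orbit of the Weierstrass data of its underlying Weierstrass model. By Theorem \ref{thm:boundary}, every $X$ is a pseudoelliptic K3 obtained by contracting the section of a unique slc Weierstrass elliptic K3 $(g: Y \to \bP^1, S)$, recovered by blowing up the $A_1$ singularity at the image of the section. Globalizing this over the universal family over $\ke$ by fiberwise blowup produces a family $\mathcal{Y} \to \ke$ of slc Weierstrass K3s with section. Pushing forward appropriate powers of the normal bundle of the section yields universal Weierstrass data $(A,B)$, determining a classifying morphism $\ke \to [V_{24}^{ss}/\mathrm{SL}_2]$. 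By \cite[Theorem 7.1]{oo}, slc Weierstrass K3 surfaces (both the ADE case and the case with L-type cusps) correspond exactly to semistable points of $V_{24}$, so this map lands in $V_{24}^{ss}$; passing to the coarse GIT quotient gives $\me \to \oMG$.

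Next I would analyze the map on $U = \me \setminus \Delta$. Here the Weierstrass model has only ADE singularities and is GIT-stable by Theorem \ref{thm:gitsss}, so the image lies in $\oMG_s$. The fiber of $U \to \oMG_s$ over a Weierstrass K3 $(g: Y \to \bP^1, S)$ consists of choices of marked singular fiber $F$, and by Remark \ref{rmk:singfibers} there are $24$ such choices counted with multiplicity. Hence $U \to \oMG_s$ is generically a degree $24$ finite cover. Since $\me$ is proper (inherited from $\ke$, constructed via the formalism of \cite{master}) and $\oMG$ is separated, the map is proper, and being generically finite with integral fibers it is globally finite of degree $24$.

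For the boundary $\Delta$, any $X \in \Delta$ has an L-type cusp in its Weierstrass model, so by Theorem \ref{thm:gitsss}(2) its GIT class lies in $\oMGL$. The closed $\mathrm{SL}_2$-orbits in $\oMGL$ are represented by isotrivial $2\mathrm{L}$ Weierstrass K3 surfaces, one for each $j$-invariant of the (constant) elliptic fibration, yielding the isomorphism $\oMGL \cong \mb{P}^1$. Tracking the $j$-invariant of the generic fiber through the GIT specialization shows that the induced map $\Delta \to \oMG$ factors through $\mb{P}^1 \cong \oMGL$ via the assignment $X \mapsto j(X)$, producing the commutative diagram; the resulting map $\mb{P}^1 \to \oMG$ is then a bijection onto the strictly GIT-semistable locus $\oMGL$.

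The main obstacle will be the rigorous construction of the morphism $\me \to \oMG$ in families. Verifying that the universal Weierstrass data descends to a morphism of coarse moduli spaces, rather than merely a set-theoretic assignment, requires either invoking the universal property of the GIT quotient applied to the family coming from the universal pseudoelliptic K3, or explicitly exhibiting the Weierstrass embedding in families and absorbing the $\mathbb{G}_m$-ambiguity in the choice of $\mathscr{L}$ into the $\mathrm{SL}_2$-quotient structure. The deepest input is \cite[Theorem 7.1]{oo}, which identifies the slc Weierstrass condition with GIT semistability and is what makes the whole framework compatible.
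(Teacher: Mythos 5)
Your construction of the morphism is essentially the paper's: by Theorem \ref{thm:boundary} one blows up the point to which the section was contracted to recover an slc Weierstrass elliptic K3, and then uses GIT semistability of slc Weierstrass fibrations to map to the quotient, absorbing the ambiguity in the choice of coordinates on the base and the $\G_m$-scaling of $(A,B)$ into an equivariance argument. The paper packages this as a $\PGL_2$-torsor $\calP$ of surfaces with chosen coordinates $(s,t)$ together with a $\PGL_2$-equivariant map to the space of Weierstrass data, citing \cite[Proposition 5.1]{mir} for semistability of slc Weierstrass models (your appeal to \cite[Theorem 7.1]{oo} is a slight misattribution, as that result concerns the ADE case), while you phrase it as a classifying map to $[V_{24}^{ss}/\SL_2]$; these are the same idea. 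Note that the paper's own proof stops after constructing $\phi : \ke \to \oMG$, leaving the diagram, the degree count and the description of $\Delta$ to the surrounding remarks, so your extra detail is in the right spirit.

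Two points in that finer analysis need repair. First, on $\Delta$ you propose to track ``the $j$-invariant of the generic fiber,'' but the generic member of $\Delta$ is (the pseudoelliptic of) a \emph{non-isotrivial} rational elliptic surface with a single L cusp, so its generic fiber has non-constant $j$-invariant and this quantity is undefined; the invariant remembered by the polystable $2\mathrm{L}$ limit is the $j$-invariant of the L cusp itself, i.e.\ of the elliptic curve cut out by the leading Weierstrass coefficients at the cusp (equivalently the exceptional curve of the weighted blow-up in Lemma \ref{lem:lcusp}), which is exactly what the destabilizing one-parameter subgroup retains --- compare Theorem \ref{thm:git2} and the remark following Theorem \ref{thm:git}. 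Second, properness of $U \to \oMG_s$ does not follow from properness of $\me \to \oMG$ together with $\phi(U) \subseteq \oMG_s$; you need $U = \phi^{-1}(\oMG_s)$, i.e.\ that every surface with an L type cusp fails to be GIT stable and collapses into $\oMGL$, so this step logically depends on your $\Delta$ analysis (via Theorem \ref{thm:gitsss}) and should be ordered accordingly. Relatedly, your quasi-finiteness count via ``finitely many singular fibers, $24$ with multiplicity'' is only valid over the ADE locus: the GIT-stable locus also contains the isotrivial $j$-invariant $\infty$ slc surfaces ($\oMGslc$ in Theorem \ref{thm:gitsss}), where \emph{every} fiber is singular, so the fibers of $U \to \oMG_s$ over that one-dimensional locus require a separate argument (the paper's remark only asserts the $24{:}1$ behaviour generically).
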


	\begin{proof}
		By Theorem \ref{thm:boundary} every surface parametrized by $\ke$ is a single component pseudoelliptic K3 surface. In particular, if we blow up the point to where the section contracted, we obtain an (unstable) slc Weierstrass elliptic K3 surface.  Consider the $\mathrm{PGL}_2$-torsor:
		$\calP = \{(X, s, t) \mid (s,t) \in C \cong \bP^1 \}/\sim,$ where $X$ is an slc Weierstrass elliptic K3 surface obtained by blowing up the section of a surface parametrized by $\ke$, the $(s,t)$ are coordinates on the base $C \cong \bP^1$ (or equivalently a basis for the linear series $|F|$ of a fiber $F$ on $X$), and we quotient by scaling.  Note that the Weierstrass coefficients $(A(s,t),B(s,t))$ defining $X$ are unique up to the scaling of the $\G_m$ action $(A,B) \mapsto (\lambda^4A, \lambda^6B)$.
		
		Since the semi-log canonical Weierstrass elliptic K3 surfaces are GIT semistable (\cite[Proposition 5.1]{mir}), we obtain a $\mathrm{PGL}_2$-equivariant morphism $\calP \to V$ which induces a morphism $\phi: \ke \to \oMG$. \end{proof} 
	
	\begin{remark}\leavevmode
		\begin{enumerate}
			\item The morphism $\ke \to \oMG$ is generically a 24 to 1 cover, as it requires the choice of some marked fiber, and generically there are 24 choices. The morphism is \emph{not} finite -- e.g. families with one L type cusp of fixed $j$-invariant are all collapsed to the same polystable point. 
			
			\item All the underlying surfaces of pairs parametrized by $\ke$ are in fact GIT semi-stable, even though all pairs with an L type cusp of fixed $j$-invariant map to the same GIT polystable point. One might wonder if the locus inside the GIT stack $[V_{24}^{ss} \sslash PGL_2]$ consisting of those surfaces that appear in $\ke$ is an open Deligne-Mumford substack with proper coarse moduli space factoring the morphism $\ke \to \oMG$. Furthermore, it is natural to compare this to a Kirwan desingularization of $\oMG$. We will pursue these questions in the future. 
			
			\item In the morphism from stable pairs to GIT, all surfaces with an L type cusp get collapsed to the polystable orbit corresponding to the KSBA-unstable, but GIT semistable (unique) surface with 2L cusps of the same $j$-invariant.
			\item The locus of surfaces with an L type cusp is 9 dimensional. Indeed, such surfaces are birational to a rational elliptic surface (which have an 8 dimensional moduli space) with a choice of a fiber to replace by an L type cusp. There is a $\mb{P}^1$ worth of choices. 
		\end{enumerate}
	\end{remark}

	\subsection{GIT for Weierstrass surfaces with a marked fiber}\label{sec:git2}
	
	We extend Miranda's GIT construction to produce a moduli space of Weierstrass surfaces with a choice of marked fiber. Such data can be represented by triples $(A,B,l)$ where $(A,B) \in V_{4N} \oplus V_{6N}$ are Weierstrass data as above, and $l \in V_1$ is a linear form. Then $\mb{G}_m \times \mb{G}_m \times SL_2$ acts naturally on $V_{4N} \oplus V_{6N} \oplus V_1$ where the first $\mb{G}_m$ acts on $V_{4N} \oplus V_{6N}$ with weights $4N$ and $6N$ and the second copy acts on $V_1$ with weight one. 
	
	To study GIT (semi-)stability, we follow Miranda's strategy. Consider the natural morphism 
	$
	f: V_{4N} \oplus V_{6N} \to S^3V_{4N} \oplus S^2V_{6N},
	$
	let $Z_N$ be the image of $f$, and let $\mathfrak{M}_N \subset \mb{P}(S^3V_{4N} \oplus S^2V_{6N})$ be  its projectivization.  The following proposition follows from \cite[Propositions 3.1 \& 3.2]{mir}:
	
	\begin{prop} The morphism 
		$
		f \times id : V_{4N} \oplus V_{6N} \oplus V_1 \to S^3V_{4N} \oplus S^2V_{6N} \oplus V_1
		$
		is finite and $SL_2$-equivariant with fibers contained in $\mathbb{G}_m \times \mathbb{G}_m$ orbits. In particular, two triples $(A,B,l)$ and $(A',B',l')$ are in the same $\mathbb{G}_m \times \mathbb{G}_m \times SL_2$ orbit if and only if the corresponding points in $\mathfrak{M}_N \times \mb{P}(V_1)$ are in the same $SL_2$ orbit. 
	\end{prop}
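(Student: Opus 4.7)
The plan is to reduce everything to Miranda's proposition about the morphism $f : V_{4N} \oplus V_{6N} \to S^3 V_{4N} \oplus S^2 V_{6N}$ cited just before the statement, and then just track what the extra factor $V_1$ contributes. The key observation is that $V_1$ appears only as an identity factor on both sides and carries no action of the first $\mathbb{G}_m$, so adding it in preserves all the features of $f$ on the nose while making the claim about $\mathbb{G}_m \times \mathbb{G}_m$-orbits purely formal.

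First I would verify the easy properties. Finiteness of $f \times \mathrm{id}$ follows from finiteness of $f$ since products of finite morphisms are finite and $\mathrm{id}_{V_1}$ is finite. $SL_2$-equivariance is clear: the first factor is $SL_2$-equivariant by Miranda and $V_1$ carries the standard $SL_2$-representation under which $\mathrm{id}$ is trivially equivariant. Next, for the fiber description, fix a point $(X,Y,l_0)$ in the image. A preimage is a triple $(A,B,l)$ with $A^3 = X$, $B^2 = Y$, and $l = l_0$. By Miranda's result the set of pairs $(A,B)$ satisfying the first two conditions is contained in a single $\mathbb{G}_m$-orbit under the weight $(4N,6N)$ action, and since $l = l_0$ is forced, the preimage triples differ only by an element of the first $\mathbb{G}_m$, which acts trivially on $V_1$. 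Hence the fibers of $f \times \mathrm{id}$ are contained in single orbits of $\mathbb{G}_m \times \mathbb{G}_m$ (in fact of the first factor alone).

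For the ``in particular'' statement, I would pass to the $\mathbb{G}_m \times \mathbb{G}_m$-quotient of the target. The first $\mathbb{G}_m$ acts on $S^3 V_{4N} \oplus S^2 V_{6N}$ with weights $(12N,12N)$, namely by scalars, so its quotient of the nonzero locus is $\mathbb{P}(S^3 V_{4N} \oplus S^2 V_{6N})$, in which the image of $f$ descends to $\mathfrak{M}_N$ by definition. The second $\mathbb{G}_m$ acts by scalar multiplication on $V_1$ with quotient $\mathbb{P}(V_1)$. Thus $f \times \mathrm{id}$ descends to a map to $\mathfrak{M}_N \times \mathbb{P}(V_1)$ whose fibers, by the previous paragraph combined with these quotient identifications, are exactly $\mathbb{G}_m \times \mathbb{G}_m$-orbits. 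Combined with the $SL_2$-equivariance of $f \times \mathrm{id}$ (which descends to an $SL_2$-action on $\mathfrak{M}_N \times \mathbb{P}(V_1)$ trivial on no factor in general), this gives the claimed bijective correspondence between $\mathbb{G}_m \times \mathbb{G}_m \times SL_2$-orbits on the source and $SL_2$-orbits on $\mathfrak{M}_N \times \mathbb{P}(V_1)$.

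I do not expect any real obstacle here, since the content of the proposition is essentially bookkeeping on top of \cite[Propositions 3.1 \& 3.2]{mir}. The only point that requires a moment of care is checking that the $\mathbb{G}_m$ acting on $V_{4N}\oplus V_{6N}$ with weights $(4N,6N)$ really does pass through $f$ to scalar multiplication on $S^3V_{4N}\oplus S^2V_{6N}$ with a common weight, so that $\mathfrak{M}_N$ is literally the projectivized image; this is immediate from $(A,B)\mapsto (A^3,B^2)$ sending the $(4N,6N)$-action to the $(12N,12N)$-action.
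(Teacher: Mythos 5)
Your proposal is correct and follows the same route as the paper, which gives no proof beyond the remark that the statement ``follows from \cite[Propositions 3.1 \& 3.2]{mir}.'' Your argument is exactly that reduction to Miranda, with the bookkeeping for the extra $V_1$ factor (forced by $\mathrm{id}$, with the first $\mathbb{G}_m$ acting trivially on it) and the $\mathbb{G}_m \times \mathbb{G}_m$-quotient to $\mathfrak{M}_N \times \mathbb{P}(V_1)$ spelled out.
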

	
	This allows us to compute a GIT compactification of the moduli space of minimal Weierstrass fibrations with a chosen marked fiber as a GIT quotient $(\mathfrak{M}_N \times \mb{P}^1) \sslash SL_2$. We will linearize the moduli problem using the Segre embedding of $\mb{P}(S^3V_{4N} \oplus S^2V_{6N}) \times \mb{P}^1$. 
	
	\begin{prop}\label{prop:git} A triple $(A,B,l)$ is stable if and only if it is semi-stable. Furthermore, it is not stable if and only if there exists a point $q \in \mb{P}^1$ with 
		$
		v_q(A) > 2N \text{ and } v_q(B) > 3N
		$
		or with 
		$
		v_q(A) \geq 2N, \ v_q(B) \geq 3N, \text{ and } v_q(l) = 1
		$
		and at least one an equality. 
	\end{prop}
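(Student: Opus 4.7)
The plan is to apply the Hilbert--Mumford numerical criterion, extending Miranda's analysis in \cite{mir}. By the preceding proposition, the two $\mathbb{G}_m$ factors act by uniform scaling on $S^3V_{4N}\oplus S^2V_{6N}$ and on $V_1$ respectively, so they are absorbed into the projectivizations, and the GIT problem reduces to analyzing the $SL_2$-action on $\mathfrak{M}_N\times\mathbb{P}^1$ linearized via $\calO(1)\boxtimes\calO(1)$ from the Segre embedding. Any nontrivial one-parameter subgroup of $SL_2$ is conjugate to $\lambda_q(\tau)=\mathrm{diag}(\tau,\tau^{-1})$ for some attracting fixed point $q\in\mathbb{P}^1$; choosing coordinates $[s:t]$ with $q=[0:1]$, the basis vector $s^it^{n-i}\in V_n$ has $\lambda_q$-weight $2i-n$, so the minimum weight among nonzero monomials of $F\in V_n$ equals $2v_q(F)-n$.

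Applying this coordinate-by-coordinate to $A^3\in S^3V_{4N}$, $B^2\in S^2V_{6N}$ (whose minimum weights come out to $6v_q(A)-12N$ and $4v_q(B)-12N$) and to $l\in V_1$, and using the additivity of Hilbert--Mumford weights under the Segre linearization, the total weight at $\lambda_q$ is
\[
\mu_q \;=\; (12N+1)\;-\;2\min\bigl(3v_q(A),\,2v_q(B)\bigr)\;-\;2v_q(l).
\]
By the numerical criterion, $(A,B,l)$ fails to be semistable (resp.\ stable) iff $\mu_q<0$ (resp.\ $\mu_q\le 0$) for some $q\in\mathbb{P}^1$. Rearranging, the instability condition reads
\[
\min\bigl(3v_q(A),\,2v_q(B)\bigr)\;+\;v_q(l)\;>\;6N+\tfrac{1}{2}.
\]
Since the left-hand side is a non-negative integer while the threshold is a half-integer, $\mu_q$ is always a nonzero odd integer. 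This immediately proves that the stable and semistable loci coincide.

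To extract the explicit instability conditions, we split on $v_q(l)\in\{0,1\}$. When $v_q(l)=0$, one needs $\min(3v_q(A),2v_q(B))\ge 6N+1$, equivalently $v_q(A)>2N$ and $v_q(B)>3N$. When $v_q(l)=1$, one needs $\min(3v_q(A),2v_q(B))\ge 6N$, equivalently $v_q(A)\ge 2N$ and $v_q(B)\ge 3N$; this case contributes new instabilities beyond the previous one precisely when at least one of these weak inequalities is an equality. Together these give the two clauses of the proposition.

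The main obstacle I anticipate is bookkeeping the GIT linearization correctly: one must confirm that $\calO(1)\boxtimes\calO(1)$ is the natural linearization induced by the quotient presentation, and that the symmetric-power maps $A\mapsto A^3$ and $B\mapsto B^2$ really do multiply the minimum $\lambda_q$-weight by $3$ and $2$ on their respective summands so that the direct-sum computation is valid. Once this is in place, the proposition is essentially an arithmetic consequence of the parity of $12N+1$, with no additional geometric input beyond Miranda's original framework.
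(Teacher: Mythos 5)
Your proposal is correct and takes essentially the same route as the paper: the Hilbert--Mumford criterion applied to the diagonal one-parameter subgroups acting on the Segre linearization of $\mathfrak{M}_N \times \mb{P}(V_1)$, with stability coinciding with semistability because all weights are odd, and the two clauses arising from the case split $v_q(l) \in \{0,1\}$. Your closed-form weight $\mu_q = (12N+1) - 2\min\bigl(3v_q(A),\,2v_q(B)\bigr) - 2v_q(l)$ is just a repackaging of the paper's coordinate-by-coordinate weight computation.
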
 
	
	\begin{proof} Let $(A,B,l) \in \mathfrak{M}_N$ and let $\lambda: \mb{G}_m \to SL_2$ be a $1$-parameter subgroup and pick coordinates $[T_0, T_1]$ so that $\lambda$ acts by $T_0 \mapsto \lambda^e T_0$ and $T_1 \mapsto \lambda^{-e}T_1$. Then it acts on $(A,B,l)$ by
		\begin{align*}
			A &= \sum_{i = 0}^{4N} a_i T_0^i t_1^{4N - i} \mapsto \sum_{i = 0}^{4N} a_i \lambda^{2ei - 4eN} T_0^i t_1^{4N - i}, \quad 
			B = \sum_{i = 0}^{6N} b_i T_0^i t_1^{6N - i} \mapsto \sum_{i = 0}^{4N} b_i \lambda^{2ei - 6eN} T_0^i t_1^{4N - i} \\
			l &= l_0T_1 + l_1T^0 \mapsto l_0\lambda^{-e}T_1 + l_1 \lambda^eT_0.
		\end{align*}
		
		The coordinates of $\mb{P}(S^3V_{4N}\oplus S^2V_{6N}) \times \mb{P}(V_1)$ are given by $l_0a_ia_ja_k, l_0b_lb_m$, $l_1a_ia_ja_k$, and $l_0b_lb_m$ which  respectively have weights 
		$$
		2e(i + j + k) - 12eN - e, \quad
		2e(l + m) - 12eN - e, \quad
		2e(i + j + k) - 12eN + e,  \text{ and } \quad
		2e(l + m) - 12eN + e.$$
		
		By the Hilbert-Mumford criterion,  a point is not (semi-)stable if and only if there exists a $1$-parameter subgroup such that all the weights are non-negative (respectively positive).
		
		Suppose $(A,B,l)$ is not (semi-)stable and pick a $1$-parameter subgroup and coordinates as above. Then we have, after dividing by $e \neq 0$,
		\begin{align*}
			2e(i + j + k) - 12eN - e < (\le) \ 0 &\implies l_0a_ia_ja_k = 0, \,
			2e(l + m) - 12eN - e < (\le) \ 0 \implies l_0b_lb_m = 0\\
			2e(i + j + k) - 12eN + e < (\le) \ 0 &\implies l_1a_ia_ja_k = 0, \,
			2e(l + m) - 12eN + e < (\le) \ 0 \implies l_1b_lb_m = 0.
		\end{align*}
		Note that the left hand side is always odd and so equality is never achieved. From this we can conclude that stability coincides with semi-stability.  Now consider the cases where $i = j = k$ and $l = m$. Then we see that $l_0a_i^3 = 0$ for $i \le 2N$, $l_1a_i^3 = 0$ for $i \le 2N - 1$, $l_0b_l^2 = 0$ for $l \le 3N$, and $l_1b_l^2 = 0$ for $l \le 3N - 1$. Let $q = [0,1]$ be the point given by $T_0 = 0$. If $l_0 \neq 0$, then we must have that $a_i = 0$ for $i \le 2N$ and $b_l = 0$ for $i \le 3N$. Thus the order of vanishing $v_q(A) > 2N$ and $v_q(B) > 3N$.  Otherwise, if $l_0 = 0$ then $l_1 \neq 0$ so we must have that $a_i = 0$ for $i \le 2N - 1$ and $b_l = 0$ for $i \le 3N - 1$. In this case, $v_q(l) = 1$, $v_q(A) \geq 2N$ and $v_q(B) \geq 3N$. 
		
		Conversely, given a triple $(A,B,l)$ satisfying such order of vanishing conditions, we may pick coordinates such that $q = [0,1]$. Then it is easy to see that the $1$-parameter subgroup acting by $(T_0, T_1) \mapsto (\lambda T_0, \lambda^{-1}T_1)$ demonstrates that $(A,B,l)$ is not stable.  \end{proof} 
	
	In the case of K3 surfaces where $N = 2$, we obtain an especially pleasant result: 
	
	\begin{cor}\label{cor:git} A point of $\mathfrak{M}_2$ is stable if and only if it represents a $1$-marked Weierstrass fibration $(f : X \to \mb{P}^1, S + \epsilon F)$ with at worst semi-log canonical singularities. \end{cor}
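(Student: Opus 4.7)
The plan is to specialize Proposition \ref{prop:git} to $N = 2$ and translate its vanishing-order non-stability conditions into the failure of semi-log canonical singularities for the pair, using Tate's algorithm together with Lemma \ref{lem:lcusp}. A triple $(A, B, l) \in V_8 \oplus V_{12} \oplus V_1$ encodes the Weierstrass pair $(f : X \to \mb{P}^1, S + \epsilon F)$ cut out by $y^2 z = x^3 + Axz^2 + Bz^3$ with marked fiber $F = V(l)$. Specializing Proposition \ref{prop:git}, such a triple is non-stable if and only if there exists $q \in \mb{P}^1$ satisfying either \textbf{(i)} $v_q(A) \geq 5$ and $v_q(B) \geq 7$, or \textbf{(ii)} $v_q(A) \geq 4$ and $v_q(B) \geq 6$ with at least one of the two an equality, and $v_q(l) = 1$.

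To match these with slc failure I will use the dictionary between vanishing orders and fiber/cusp types: $\min(3v_q(A), 2v_q(B)) \leq 11$ corresponds to a minimal Kodaira fiber with at worst canonical (hence lc) surface singularity; $\min(3v_q(A), 2v_q(B)) = 12$ is by Definition \ref{def:l} precisely a type $\mathrm{L}$ cusp, which is strictly lc by Lemma \ref{lem:lcusp}; and $\min(3v_q(A), 2v_q(B)) \geq 13$ produces a non-lc surface singularity. In case (i) we have $\min \geq 14$, so $X$ itself is non-slc at the cuspidal point of the fiber over $q$, and hence $(X, S + \epsilon F)$ is non-slc regardless of the choice of $F$. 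In case (ii) we have $\min = 12$, so $X$ has a type $\mathrm{L}$ cusp at $q$; since $v_q(l) = 1$ means $F$ passes through this cusp, Lemma \ref{lem:lcusp} gives $\mathrm{lct}(X, 0, F) = 0$, so $(X, S + \epsilon F)$ fails to be log canonical for every $\epsilon > 0$.

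Conversely, suppose neither (i) nor (ii) holds at any $q$. Failure of (i) everywhere forces $\min(3v_q(A), 2v_q(B)) \leq 12$ globally, so $X$ is slc with at worst type $\mathrm{L}$ cusps. Failure of (ii) at each L cusp forces $v_q(l) = 0$, so $F$ avoids every L cusp. At each minimal Kodaira fiber the log canonical threshold of $F$ is positive, and the section $S$ lies entirely in the smooth locus of every Weierstrass fibration, so $(X, S + \epsilon F)$ is slc for $0 < \epsilon \ll 1$, which yields the desired equivalence.

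The main subtlety is the matching at the boundary value $\min(3v_q(A), 2v_q(B)) = 12$: the underlying surface $X$ is still slc there, but marking a fiber through the cusp of the type $\mathrm{L}$ singularity immediately destroys log canonicity of the pair because $\mathrm{lct}(X, 0, F) = 0$. This is exactly what clause (ii) of Proposition \ref{prop:git} isolates, and once the three regimes of $\min(3v_q(A), 2v_q(B))$ are correctly matched with the three singularity classes (non-lc, L cusp, minimal Kodaira) the equivalence follows formally.
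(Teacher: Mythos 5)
Your overall route is the same as the paper's: specialize Proposition \ref{prop:git} to $N=2$ and translate the order-of-vanishing conditions into (semi-)log canonicity via the trichotomy on $\min(3v_q(A),2v_q(B))$, with Lemma \ref{lem:lcusp} supplying $\mathrm{lct}(X,0,F)=0$ at a type L cusp. In the case where the generic fiber is smooth your dictionary is correct, and the boundary case $\min(3v_q(A),2v_q(B))=12$ with $v_q(l)=1$ is handled exactly as in the paper.

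However, there is a genuine gap: you never treat the points of $\mathfrak{M}_2$ whose discriminant $4A^3+27B^2$ vanishes identically, i.e.\ the non-normal Weierstrass fibrations with nodal generic fiber. These are precisely the surfaces for which ``semi-log canonical'' rather than ``log canonical'' is needed; they occur both among stable points (the isotrivial $j$-invariant $\infty$ surfaces with $\mathrm{N}_0,\mathrm{N}_1,\mathrm{N}_2$ fibers, e.g.\ the $4\mathrm{N}_1$ surfaces of Theorem \ref{thm:kepsilon}, which also give the GIT boundary component of Theorem \ref{thm:gitsss}) and among unstable ones ($\mathrm{N}_k$ with $k\ge 3$). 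Your dictionary --- minimal Kodaira fibers, type L cusps, Tate's algorithm, Lemma \ref{lem:lcusp} --- only applies when the generic fiber is smooth (Definition \ref{def:l} assumes this), so assertions such as ``$X$ is slc with at worst type L cusps'' and ``at each minimal Kodaira fiber the log canonical threshold of $F$ is positive'' do not literally cover this case. The gap is fillable with the paper's own ingredients: first observe that $(A,B)\neq (0,0)$ for any point of $\mathfrak{M}_2$, so the generic fiber is at worst nodal; in the nodal case the cuspidal fibers are of type $\mathrm{N}_k$ with $(v_q(A),v_q(B))=(2k,3k)$, so your case (i) is exactly $k\ge 3$, which is non-slc by \cite[Lemma 3.2.2]{ln}, while $\min(3v_q(A),2v_q(B))=12$ is exactly an $\mathrm{N}_2$ fiber, whose slc threshold with respect to a fiber through the cusp is $0$ by Remark \ref{rmk:n2}, matching your case (ii). With that case added, your argument coincides with the paper's proof, which invokes \cite[Lemma 3.2.1, Lemma 3.2.2, Corollary 3.2.4]{ln} precisely to cover both the normal and non-normal situations.
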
 
	
	\begin{proof} First note that the generic fiber of the fibration $f : X \to \mb{P}^1$ represented by a stable point in $\mathfrak{M}_N$ is at worst nodal since the Weierstrass data of a stable point cannot be identically $0$.  Then combining the above Proposition \ref{prop:git} with \cite[Lemma 3.2.1, Lemma 3.2.2, Corollary 3.2.4]{ln}, and noting that the log canonical threshold of a type $\mathrm{L}/\mathrm{N}_2$ fiber is 0 (see Lemma \ref{lem:lcusp}),  a point is unstable if and only if there exists a point $q \in \mb{P}^1$ such that the pair $(X, S + \epsilon F)$ is not semi-log canonical around the singular point of $f^{-1}(q)$. The result then follows since a Weierstrass fibration $(X, S + \epsilon F)$ has semi-log canonical singularities away from the singular points of the fibers. \end{proof}
	
	\begin{definition}\label{def:git} If $\mathfrak{M}_2^s$ denotes the stable/semi-stable locus, we denote  $\oMGd = \mathfrak{M}_2^s \sslash \SL_2$.\end{definition}

	\begin{theorem}\label{thm:mainresult} $\km$ is a smooth Deligne-Mumford stack with coarse space map $\km \to \oMGd$ given by the GIT compactification. Furthermore, there is a morphism $\km \to \oMG$ given by forgetting the marked fiber. A Weierstrass fibration $(f : X \to \mb{P}^1, S)$ is represented by a point in $\oMG$ if and only if there exists a fiber $F$ so that $(X, S + \epsilon F)$ is a stable pair. \end{theorem}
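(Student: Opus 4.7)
The plan is to identify $\km$ with a GIT stack quotient, in the spirit of the $\PGL_2$-torsor construction from Theorem \ref{thm:git}. By Theorem \ref{thm:boundary}, every object of $\km$ is a pair $(X, S+\epsilon F)$ which, upon blowing up the contracted section, becomes an slc Weierstrass elliptic K3 surface $(Y, S)$ together with a marked fiber $F$ that is not an L-cusp. After choosing coordinates on $\bP^1$ and trivializing the Weierstrass line bundles, such data is encoded by a triple $(A, B, l) \in V_8 \oplus V_{12} \oplus V_1$ that is well-defined up to the natural action of $G := \bG_m \times \bG_m \times \SL_2$. This exhibits $\km$ as a quotient stack $[U/G]$, where $U \subset V_8 \oplus V_{12} \oplus V_1$ is the open locus of triples giving slc pairs. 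By Corollary \ref{cor:git}, $U$ is precisely the preimage of the GIT-stable locus $\mathfrak{M}_2^s$.

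From this presentation, smoothness of $\km$ is immediate: $U$ is open in an affine space, hence smooth, and $G$ acts with finite stabilizers on the stable locus by standard GIT. The coarse space of $[U/G]$ is thus the GIT quotient $U \sslash G = \mathfrak{M}_2^s \sslash \SL_2 = \oMGd$, establishing the first two assertions.

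The forgetful morphism $\km \to \oMG$ is induced by $(A, B, l) \mapsto (A, B)$. It descends to the GIT quotients because stability of $(A, B, l)$ forces semistability of $(A, B)$: otherwise there would exist $q \in \bP^1$ with $v_q(A) > 4$ and $v_q(B) > 6$, already violating condition (i) of Proposition \ref{prop:git}.

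For the final equivalence, the ``if'' direction follows from the preceding paragraph. Conversely, if $(X, S)$ is represented by a point of $\oMG$, then Miranda's stability analysis (the $l$-free analogue of Proposition \ref{prop:git}) tells us $(X, S)$ is a minimal Weierstrass model. Such a surface carries at most two L-cusps by the discriminant count $\deg \mathscr{D} = 24$, so we may choose a linear form $l \in V_1$ whose zero locus $F$ avoids them all. Then $(A, B, l)$ violates neither condition of Proposition \ref{prop:git}, so it is GIT-stable, and Corollary \ref{cor:git} gives that $(X, S + \epsilon F)$ is an slc stable pair. The main obstacle in this program is verifying that the moduli interpretation of $\km$ from Theorem \ref{thm:boundary} and the GIT stack $[U/G]$ from Corollary \ref{cor:git} define the same moduli functor; the remaining claims are then formal consequences.
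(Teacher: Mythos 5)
Your overall strategy is the same as the paper's: identify $\km$ with a quotient stack built from Weierstrass data $(A,B,l)$, deduce smoothness from smoothness of the parameter space, get the coarse space as the GIT quotient $\oMGd$, and obtain $\km \to \oMG$ and the last equivalence from the stability analysis of Proposition \ref{prop:git} and Corollary \ref{cor:git}. The problem is that the one step carrying the real content is missing: you assert in the first paragraph that the triple construction ``exhibits $\km$ as a quotient stack $[U/G]$,'' and then concede in your last sentence that verifying this is ``the main obstacle.'' That identification \emph{is} the theorem; once it is granted, smoothness, the coarse space, the forgetful map, and the final equivalence are, as you say, formal. A complete argument has to compare the two moduli functors in families: starting from an arbitrary family of $\epsilon$-marked stable pairs in $\km$ over a base, one must (at least \'etale-locally) blow the contracted sections back up, put the family in Weierstrass form with a marked fiber, and trivialize the relevant bundles, producing a torsor over the base; and conversely one must check that the tautological Weierstrass family over the stable locus yields, after contracting the section, a family of stable pairs whose classifying map is equivariant and inverts the first construction. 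This is exactly how the paper proceeds: the $\PGL_2$-torsor construction from the proof of Theorem \ref{thm:git} gives a morphism $\km \to [\mathfrak{M}_2^s/\PGL_2]$, Corollary \ref{cor:git} gives the $\PGL_2$-equivariant family over $\mathfrak{M}_2^s$ inducing the inverse, and the coarse space statement then needs the additional observation that $[\mathfrak{M}_2^s/\SL_2] \to [\mathfrak{M}_2^s/\PGL_2]$ is a $\mu_2$-gerbe, so both have coarse space $\mathfrak{M}_2^s \sslash \SL_2 = \oMGd$. Your presentation differs mildly in that you keep the full group $\bG_m \times \bG_m \times \SL_2$ acting on the affine space of triples rather than passing through Miranda's finite map to $\mathfrak{M}_2 \times \bP(V_1)$ and quotienting only by $\PGL_2$; that is a reasonable variant (and makes smoothness of the atlas transparent), but without the functorial identification it proves nothing about $\km$, and you would still owe the comparison $U \sslash G \cong \mathfrak{M}_2^s \sslash \SL_2$ (which is what the finite-map proposition is for) to land in $\oMGd$ as defined.

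Two smaller points. In the converse direction you write that a point of $\oMG$ ``is a minimal Weierstrass model''; this is false, since the strictly semistable locus $\oMGL$ consists precisely of non-minimal surfaces with type L cusps (Theorem \ref{thm:gitsss}), and indeed your very next sentence counts L cusps, contradicting the claim. The actual argument you then give --- a semistable $(A,B)$ has at most two points with $v_q(A) \geq 4$, $v_q(B) \geq 6$, so a fiber avoiding them yields a GIT-stable triple and hence, by Corollary \ref{cor:git}, an slc $\epsilon$-marked pair --- is correct and is the paper's ``generic choice of fiber'' argument, so this is a wording slip rather than a fatal error, but it should be fixed. Second, ``$G$ acts with finite stabilizers on the stable locus by standard GIT'' deserves a word, since the stability computed in Proposition \ref{prop:git} is for $\SL_2$ acting on $\mathfrak{M}_2 \times \bP(V_1)$, not for your larger group on the affine space; finiteness of the $\bG_m \times \bG_m$-part of the stabilizer uses that $(A,B) \neq (0,0)$ and $l \neq 0$ on $U$.
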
 
	
	\begin{proof} By the proof of Theorem \ref{thm:git}, we obtain a birational morphism $\km \to [\mathfrak{M}_2^s/\PGL_2]$. On the other hand, by the above Corollary \ref{cor:git}, there is a family of  KSBA-stable one $\epsilon$-marked Weierstrass fibrations $(f : X \to \mb{P}^1, S + \epsilon F)$ over $\mathfrak{M}_2^s$. This induces a $\PGL_2$ equivariant map $\mathfrak{M}_2^s \to \km$ which gives an inverse map $[\mathfrak{M}_2^s/\PGL_2] \to \km$ exhibiting these as isomorphisms. On the other hand, $[\mathfrak{M}_2^s/\PGL_2]$ is a smooth stack as $\mathfrak{M}_2^s$ is an open subset of a smooth variety so $\km$ is smooth.
		
		The composition $\km \to [\mathfrak{M}_2^s/\PGL_2] \to \mathfrak{M}_2 \sslash \SL_2$ is the coarse moduli space map. Indeed, $[\mathfrak{M}_2^s/\SL_2]$ and $[\mathfrak{M}_2^s/\PGL_2]$ have the same coarse moduli space; note that $[\mathfrak{M}_2^s/\SL_2] \to [\mathfrak{M}_2^s/\PGL_2]$ is a $\mu_2$-gerbe being the base change of the map $B\SL_2 \to B\PGL_2$ so $[\mathfrak{M}_2^s/\SL_2] \to [\mathfrak{M}_2^s/\PGL_2]$ is a relative coarse space and the coarse map $[\mathfrak{M}_2^s/\SL_2] \to \mathfrak{M}_2^s \sslash \SL_2$ factors through it. 
		
		If $(A,B,l)$ is in $\mathfrak{M}^s_2$, then $(A,B)$ is a semi-stable point for Miranda, and conversely, if $(A,B)$ is semi-stable in Miranda's space then for a generic choice of fiber $F$, the corresponding fibration $(X \to \mb{P}^1, S + eF)$ is a stable pair and the corresponding GIT data $(A,B,l)$ is GIT stable. \end{proof}

	\bibliographystyle{alpha}
	\bibliography{K3}
	
\end{document}